\newcommand{\DM}{\textbf{\textup{DM}}}
\newcommand{\FEt}{\textbf{\textup{FÉt}}}
\newcommand{\FP}{\textbf{\textup{FP}}}
\newcommand{\QP}{\textbf{\textup{QP}}}
\newcommand{\Green}{\categ{Green}}
\newcommand{\Mack}{\categ{Mack}}
\newcommand{\free}{\textit{free}}
\DeclareMathOperator{\Gal}{Gal}
\DeclareMathOperator{\im}{im}
\DeclareMathOperator{\Tor}{Tor}
\newcommand{\bigboxtimes}{\operatornamewithlimits{\mathchoice{\raisebox{-0.45em}{\scalebox{2}{\ensuremath{\boxtimes}}}}{\raisebox{-0.25em}{\scalebox{1.45}{\ensuremath{\boxtimes}}}}{}{}}}
\title[Spectral {M}ackey functors and equivariant algebraic {$K$}-theory {(II)}]{Spectral {M}ackey functors and equivariant algebraic {$K$}-theory {(II)}}
\author{Clark Barwick}
\address{Massachusetts Institute of Technology, Department of Mathematics, Bldg. 2, 77 Massachusetts Ave., Cambridge, MA 02139-4307, USA}
\email{clarkbar@gmail.com}
\author{Saul Glasman}
\address{School of Mathematics, Institute for Advanced Study, Fuld Hall, 1 Einstein Dr., Princeton NJ 08540}
\email{sglasman@math.ias.edu}
\author{Jay Shah}
\address{Department of Mathematics, Massachusetts Institute of Technology, 77 Massachusetts Avenue, Cambridge, MA 02139-4307, USA}
\email{jshah@math.mit.edu}
\begin{document}

\begin{abstract} We study the ``higher algebra'' of spectral Mackey functors, which the first named author introduced in Part I of this paper. In particular, armed with our new theory of symmetric promonoidal $\infty$-categories and a suitable generalization of the second named author's Day convolution, we endow the $\infty$-category of Mackey functors with a well-behaved symmetric monoidal structure. This makes it possible to speak of \emph{spectral Green functors} for any operad $O$. We also answer a question of Mathew, proving that the algebraic $K$-theory of group actions is lax symmetric monoidal. We also show that the algebraic $K$-theory of derived stacks provides an example. Finally, we give a very short, new proof of the equivariant Barratt--Priddy--Quillen theorem, which states that the algebraic $K$-theory of the category of finite $G$-sets is simply the $G$-equivariant sphere spectrum.
\end{abstract}

\maketitle

\setcounter{tocdepth}{1}
\tableofcontents


\setcounter{section}{-1}

\section{Summary} This paper is part of an effort to give a complete description of the structures available on the algebraic $K$-theory of varieties and schemes (and even of various derived stacks) with all their concomitant functorialities and homotopy coherences.

So suppose $X$ a scheme (quasicompact and quasiseparated). The derived tensor product $\otimes^{\LL}$ on perfect complexes on $X$ defines a symmetric monoidal structure on the derived category $D^{\textit{perf}}_X$ of perfect complexes on $X$. With a little more effort, one can lift this structure to a symmetric monoidal structure on the stable $\infty$-category of perfect complexes on $X$. This suffices to get a product on algebraic $K$-theory
\begin{equation*}
\otimes\colon\fromto{K(X)\wedge K(X)}{K(X)}
\end{equation*}
that is associative and commutative up to coherent homotopy. Thus, $K(X)$ has not only the structure of a connective spectrum, but also the structure of a \emph{connective $E_{\infty}$ ring spectrum}. This is an exceedingly rich structure: not only do the homotopy groups $K_{\ast}(X)$ form a graded commutative ring, but these homotopy groups also support (in a functorial way) a tremendous amount of structure involving intricate higher homotopy operations called \emph{Toda brackets}. Still more information (in the form of \emph{Dyer-Lashof operations}) can be found on the $\FF_p$-cohomology of $K(X)$.

Now for any morphism $f\colon\fromto{Y}{X}$ of schemes, the derived functor
\[\LL f^{\star}\colon\fromto{D^{\textit{qcoh}}_X}{D^{\textit{qcoh}}_Y}\] on the category of complexes with quasicoherent cohomology preserves perfect complexes, and the resulting functor $\LL f^{\star}\colon\fromto{D^{\textit{perf}}_X}{D^{\textit{perf}}_Y}$ induces a morphism
\begin{equation*}
f^{\star}\colon\fromto{K(X)}{K(Y)}
\end{equation*}
on the algebraic $K$-theory. The functor $\LL f^{\star}$ is compatible with the derived tensor product, in the sense that for any perfect complexes $E$ and $F$ on $X$, there is a canonical isomorphism
\begin{equation*}
\LL f^{\star}(E\otimes^{\LL}F)\simeq(\LL f^{\star}E)\otimes^{\LL}(\LL f^{\star}F).
\end{equation*}
Again this can be lifted to the level of stable $\infty$-categories, whence the induced morphism $f^{\star}$ on $K$-theory turns out to be a morphism of connective $E_{\infty}$ ring spectra. This implies that the induced homomorphism on homotopy groups
\begin{equation*}
f^{\star}\colon\fromto{K_{\ast}(X)}{K_{\ast}(Y)}
\end{equation*}
is a homomorphism of graded commutative rings, and it must respect all the higher homotopy operations on $K_{\ast}(X)$ as well.

Furthermore, one can fit all the functors $\LL f^{\star}$ together to get a presheaf $\goesto{U}{D^{\mathit{perf}}_U}$ on the big site of all schemes. This can even be viewed as a presheaf of stable $\infty$-categories, which suffices to give us a presheaf of connective spectra $\goesto{U}{K(U)}$. Since the morphisms $f^{\star}$ are morphisms of connective $E_{\infty}$ ring spectra, we can regard this as presheaf of $E_{\infty}$ ring spectra.

If one wanted, one might ``externalize'' the product on $K$-theory in the following manner. For any two schemes $X$ and $Y$ over a base scheme $S$, one may define an external tensor product
\[\boxtimes^{\LL}\colon\fromto{D^{\textit{perf}}_X\times D^{\textit{perf}}_X}{D^{\textit{perf}}_{X\times_SY}}\]
by the assignment $\goesto{(E,F)}{(\LL\pr_1^{\star}E)\otimes^{\LL}(\LL\pr_2^{\star}F)}$. Note that we have natural equivalences
\[(\LL f^{\star}E)\boxtimes^{\LL}(\LL g^{\star}F)\simeq\LL(f\times g)^{\star}(E\boxtimes^{\LL}F)\]
If we lift this to the level of stable $\infty$-categories, this gives rise to an external pairing
\begin{equation*}
\boxtimes\colon\fromto{K(X)\wedge K(Y)}{K(X\times_SY)},
\end{equation*}
which is functorial (contravariantly) in $X$ and $Y$. The $E_{\infty}$ product on $K(X)$ can now be obtained by pulling back this external pairing along the diagonal map:
\begin{equation*}
K(X)\wedge K(X)\to K(X\times_SX)\to K(X).
\end{equation*}

A morphism of schemes $f\colon\fromto{Y}{X}$ may induce morphisms in the \emph{covariant} direction as well. The pushforward $\RR f_{\!\star}\colon\fromto{D^{\textit{qcoh}}_Y}{D^{\textit{qcoh}}_X}$ generally will not preserve perfect complexes. If, however, $f$ is flat and proper, then for any perfect complex $E$, the complex $\RR f_{\!\star}E$ \emph{is} perfect. Thus in this case $\RR f_{\!\star}$ restricts to a functor $\RR f_{\!\star}\colon\fromto{D^{\textit{perf}}_Y}{D^{\textit{perf}}_X}$, and after lifting this to the stable $\infty$-categories, we find an induced morphism
\begin{equation*}
f_{\!\star}\colon\fromto{K(Y)}{K(X)}
\end{equation*}
on the algebraic $K$-theory. One thus obtains a covariant functor $\goesto{U}{K(U)}$, but only with respect to flat and proper morphisms. Observe, however, that since the functors $\RR f_{\!\star}$ do not commute with the derived tensor product, this functor is \emph{not} valued in ring spectra.

Nevertheless, if $f\colon\fromto{Y}{X}$ is proper and flat, we do have an algebraic structure preserved by $\RR f_{\!\star}$. Observe that one may regard $K(Y)$ as a module over the $E_{\infty}$ ring spectrum $K(X)$ via $f^{\star}$. For any perfect complexes $E$ on $Y$ and $F$ on $X$, one has a canonical equivalence
\begin{equation*}
(\RR f_{\!\star}E)\otimes^{\LL}F\simeq\RR f_{\!\star}(E\otimes^{\LL}\LL f^{\star}F)
\end{equation*}
of perfect complexes; this is the usual projection formula \cite[Exp. III, Pr. 3.7]{MR50:7133}. At the level of $K$-theory, this translates to the observation that the morphism
\begin{equation*}
f_{\!\star}\colon\fromto{K(Y)}{K(X)}
\end{equation*}
is a morphism of connective $K(X)$-modules. The induced map on homotopy groups
\begin{equation*}
f_{\!\star}\colon\fromto{K_{\ast}(Y)}{K_{\ast}(X)}
\end{equation*}
is therefore a homomorphism of $K_{\ast}(X)$-modules.

Note that the \emph{external} tensor product $\boxtimes^{\LL}$ is actually perfectly compatible with the pushforwards, in the sense that one has natural equivalences
\[(\RR f_{\!\star}E)\boxtimes^{\LL}(\RR g_{\star}F)\simeq\RR(f\times g)_\star(E\boxtimes^{\LL}F),\]
so on $K$-theory the external product $\boxtimes\colon\fromto{K(X)\wedge K(Y)}{K(X\times_SY)}$ is functorial (covariantly) in $X$ and $Y$.

Last, but certainly not least, there is a compatibility between the morphisms $f^{\star}$ and the morphisms $g_{\star}$, which results from the base change theorem for complexes \cite[Exp. IV, Pr. 3.1.0]{MR50:7133}. Suppose that
\begin{equation*}
\begin{tikzpicture}[baseline]
\matrix(m)[matrix of math nodes,
row sep=4ex, column sep=4ex,
text height=1.5ex, text depth=0.25ex]
{Y' & Y \\
X' & X \\ };
\path[>=stealth,->,font=\scriptsize]
(m-1-1) edge node[above]{$g$} (m-1-2)
edge node[left]{$f$} (m-2-1)
(m-1-2) edge node[right]{$f$} (m-2-2)
(m-2-1) edge node[below]{$g$} (m-2-2);
\end{tikzpicture}
\end{equation*}
is a pullback square of schemes in which the horizontal maps $g$ are flat and proper. Then the canonical morphism
\begin{equation*}
\fromto{\LL f^{\star}\RR g_{\star}}{\RR g_{\star}\LL f^{\star}}
\end{equation*}
is an objectwise equivalence of functors $\fromto{D^{\mathit{perf}}_{X'}}{D^{\mathit{perf}}_Y}$. This translates to the condition that there is a canonical homotopy
\begin{equation*}
f^{\star}g_{\star}\simeq g_{\star}f^{\star}\colon\fromto{K(X')}{K(Y)}
\end{equation*}
of morphisms of $K(X)$-modules. In fact, this compatibility between the pullbacks and the pushforwards, combined with the compatibility between $f_{\star}$ and the external tensor product, allows us to \emph{deduce} the projection formula.

Let us summarize the structure we've found on the assignment $\goesto{U}{K(U)}$:
\begin{itemize}
\item For every scheme $X$, we have an $E_{\infty}$ ring spectrum $K(X)$. Moreover, for any two schemes $X$ and $Y$ over a base $S$, one has an external pairing
\[\boxtimes\colon\fromto{K(X)\wedge K(Y)}{K(X\times_SY)}.\]
\item For every morphism $f\colon\fromto{Y}{X}$, we have a pullback morphism
\begin{equation*}
f^{\star}\colon\fromto{K(X)}{K(Y)},
\end{equation*}
which is compatible with the external pairings and thus also with the $E_{\infty}$ product.
\item For every flat and proper morphism $f\colon\fromto{Y}{X}$, we have a pushforward morphism
\begin{equation*}
f_{\!\star}\colon\fromto{K(Y)}{K(X)},
\end{equation*}
which is compatible with the external pairings and thus (in light of the next condition) also with the $K(X)$-module structure.
\item For any pullback square
\begin{equation*}
\begin{tikzpicture}[baseline]
\matrix(m)[matrix of math nodes,
row sep=4ex, column sep=4ex,
text height=1.5ex, text depth=0.25ex]
{Y' & Y \\
X' & X \\ };
\path[>=stealth,->,font=\scriptsize]
(m-1-1) edge node[above]{$g$} (m-1-2)
edge node[left]{$f$} (m-2-1)
(m-1-2) edge node[right]{$f$} (m-2-2)
(m-2-1) edge node[below]{$g$} (m-2-2);
\end{tikzpicture}
\end{equation*}
in which the horizontal maps $g$ are flat and proper, we have a canonical homotopy
\begin{equation*}
f^{\star}g_{\star}\simeq g_{\star}f^{\star}\colon\fromto{K(X')}{K(Y)}.
\end{equation*}
of morphisms of $K(X)$-modules.
\end{itemize}
In this paper, we will demonstrate that these structures, along with all of their homotopy coherences, are neatly packaged in a \emph{spectral Green functor} on the category of schemes.

This structure is the origin of both the $\Gal(E/F)$-equivariant $E_\infty$ ring spectrum structure on the algebraic $K$-theory of a Galois extension $E\supset F$ and the cyclotomic structure on the $p$-typical curves on a smooth $\FF_p$-scheme. For the former, see \ref{cnstr:Galoisequivarianteinftyalg}, and for the latter, see the forthcoming paper \cite{cyclon}.

In order to describe all the structure we see here, we study the ``higher algebra'' (in the sense of Lurie's book \cite{HA}, for example) of spectral Mackey functors, which we introduced in Part I of this paper \cite{M1}. The $\infty$-category of spectral Mackey functors turns out to admit all the same well-behaved structures as the $\infty$-category of spectra itself. In particular, the $\infty$-category of Mackey functors admits a well-behaved symmetric monoidal structure. This, combined with Saul Glasman's  convolution for $\infty$-categories \cite{GlasmanDay}, makes it possible to speak of $E_1$ algebras, $E_\infty$ algebras, or indeed $O$-algebras for any operad $O$ in this context; these are called \emph{$O$-Green functors}.

We use this framework to provide a very simple answer to a question posed to us by Akhil Mathew, in which we demonstrate that the functor that assigns to any $\infty$-category with an action of a finite group $G$ its equivariant algebraic $K$-theory is lax symmetric monoidal. We also show that the algebraic $K$-theory of derived stacks with its transfer maps as described above offers an example of an $E_{\infty}$ Green functor. We also use this theory to give a new proof of the equivariant Barratt--Priddy--Quillen theorem, which states that the algebraic $K$-theory of the category of finite $G$-sets is simply the $G$-equivariant sphere spectrum. (In fact, we will generalize this result dramatically.)

\subsection*{Warning} Let us emphasize that $E_{\infty}$-Green functors for a finite group $G$ are \emph{not} equivalent to algebras in $G$-equivariant spectra structured by the equivariant linear isometries operad on a complete $G$-universe. To describe the latter in line with the discussion here -- and to find such structures on algebraic $K$-theory spectra -- it is necessary to develop elements of the theory of $G$-$\infty$-categories. This we do in the forthcoming joint paper \cite{BDGNS2}.

\subsection*{Acknowledgments} We have had very helpful conversations with David Ayala and Mike Hill about the contents of this paper, its predecessor, and its sequels. We also thank the other participants of the Bourbon Seminar -- Emanuele Dotto, Marc Hoyois, Denis Nardin, and Tomer Schlank -- for their many, many insights.




\section{$\infty$-anti-operads and symmetric promonoidal $\infty$-categories} One of the many complications that arises when one combines an $\infty$-category and its opposite in the way we have in our construction of the effective Burnside $\infty$-category is that our constructions are extremely intolerant of asymmetries in basic definitions. This complication rears its head the moment we want to contemplate the symmetric monoidal structure on the Burnside $\infty$-category. In effect, the description of a symmetric monoidal $\infty$-categories given in \cite[Ch. 4]{HA} forces one to specify the data of maps \emph{out of} various tensor products in a suitably compatible fashion. Thus symmetric monoidal categories are there identified as certain \emph{$\infty$-operads}. But since we are also working with opposites of symmetric monoidal $\infty$-categories, we will come face-to-face with circumstances in which we must identify the data of maps \emph{into} various tensor products in a suitably compatible fashion. We will call the resulting opposites of $\infty$-operads \emph{$\infty$-anti-operads}.\footnote{We do not know a standard name for this structure. In a previous verion of this paper, CB called these ``cooperads,'' but this conflicts with better-known terminology.} Awkward as this may seem, it cannot be avoided.

\begin{ntn} Let $\Lambdaup(\FF)$ denote the following ordinary category. The objects will be finite sets, and a morphism $\fromto{J}{I}$ will be a map $\fromto{J}{I_{+}}$; one composes $\psi\colon\fromto{K}{J_{+}}$ with $\phi\colon\fromto{J}{I_{+}}$ by forming the composite
\begin{equation*}
K\ \tikz[baseline]\draw[>=stealth,->,font=\scriptsize](0,0.5ex)--node[above]{$\psi$}(0.5,0.5ex);\ J_{+}\ \tikz[baseline]\draw[>=stealth,->,font=\scriptsize](0,0.5ex)--node[above]{$\phi_{+}$}(0.5,0.5ex);\ I_{++}\ \tikz[baseline]\draw[>=stealth,->,font=\scriptsize](0,0.5ex)--node[above]{$\mu$}(0.5,0.5ex);\ I_{+},
\end{equation*}
where $\mu\colon\fromto{I_{++}}{I_{+}}$ is the map that simply identifies the two added points. (Of course $\Lambdaup(\FF)$ is equivalent to the category $\FF_{\ast}$ of pointed finite sets, but we prefer to think of the objects of $\Lambdaup(\FF)$ as unpointed. This is the natural perspective on this category from the theory of operator categories \cite{opcat}.)
\end{ntn}

\begin{dfn}\label{dfn:anti-operad}
\begin{enumerate}[(\ref{dfn:anti-operad}.1)]
\item An \textbf{\emph{$\infty$-anti-operad}} is an inner fibration
\begin{equation*}
p\colon\fromto{O_{\otimes}}{\N\Lambdaup(\FF)^{\op}}
\end{equation*}
whose opposite
\begin{equation*}
p^{\op}\colon\fromto{(O_{\otimes})^{\op}}{\N\Lambdaup(\FF)}
\end{equation*}
is an $\infty$-operad.
\item If $p\colon\fromto{O_{\otimes}}{\N\Lambdaup(\FF)^{\op}}$ is an $\infty$-anti-operad, then an edge of $O_{\otimes}$ will be said to be \textbf{\emph{inert}} if it is cartesian over an edge of $\N\Lambdaup(\FF)^{\op}$ that corresponds to an inert map in $\Lambdaup(\FF)$, that is, a map $\phi\colon\fromto{J}{I_{+}}$ such that the induced map $\fromto{\phi^{-1}(I)}{I}$ is a bijection \cite[Df. 2.1.1.8]{HA}, \cite[Df. 8.1]{opcat}.
\item A cartesian fibration
\begin{equation*}
q\colon\fromto{X_{\otimes}}{O_{\otimes}}
\end{equation*}
will be said to \textbf{\emph{exhibit $X_{\otimes}$ as an $O_{\otimes}$-monoidal $\infty$-category}} just in case the cocartesian fibration
\begin{equation*}
q^{\op}\colon\fromto{(X_{\otimes})^{\op}}{(O_{\otimes})^{\op}}
\end{equation*}
exhibits $(X_{\otimes})^{\op}$ as an $(O_{\otimes})^{\op}$-monoidal $\infty$-category in the sense of \cite[Df. 2.1.2.13]{HA}. When $O_{\otimes}=\N\Lambdaup(\FF)^{\op}$, we will say that $q$ \textbf{\emph{exhibits $X_{\otimes}$ as a symmetric monoidal $\infty$-category}}.
\item A \textbf{\emph{morphism $f\colon\fromto{O_{\otimes}}{P_{\otimes}}$ of $\infty$-anti-operads}} is a morphism over $\N\Lambdaup(\FF)^{\op}$ that carries inert edges to inert edges. If $O_{\otimes}$ and $P_{\otimes}$ are symmetric monoidal $\infty$-categories, then $f$ is a \textbf{\emph{symmetric monoidal functor}} if it carries all cartesian edges to cartesian edges.
\end{enumerate}
\end{dfn}

\begin{exm} Suppose $C$ an $\infty$-category. We define the \textbf{\emph{cartesian $\infty$-anti-operad}} as
\begin{equation*}
p\colon\fromto{C_{\times}\coloneq((C^{\op})^{\sqcup})^{\op}}{\N\Lambdaup(\FF)^{\op}},
\end{equation*}
where the notation $(\cdot)^{\sqcup}$ refers to the cocartesian $\infty$-operad \cite[Cnstr. 2.4.3.1]{HA}.  If $C$ is an $\infty$-category that admits all products, then the functor $p$ exhibits $C_{\times}$ as a symmetric monoidal $\infty$-category \cite[Rk. 2.4.3.4]{HA}.

An object $(I,X)$ of $C_{\times}$ consists of a finite set $I$ and a family $\{X_i\ |\ i\in I\}$; a morphism $(\phi,\omega)\colon\fromto{(I,X)}{(J,Y)}$ of $C_{\times}$ consists of a map of finite sets $\phi\colon\fromto{J}{I_{+}}$ and a family of morphisms
\begin{equation*}
\left\{\omega_j\colon\fromto{X_{\phi(j)}}{Y_j}\;\middle|\;j\in \phi^{-1}(I)\right\}
\end{equation*}
of $C$. If $C$ admits finite products, then the morphisms $\omega_j$ determine and are determined by a family of morphisms
\begin{equation*}
\left\{\ \omega_{J_i}\colon\fromto{X_i}{\prod_{j\in J_i}Y_j}\quad\middle|\quad i\in I\ \right\};
\end{equation*}
here $J_i$ denotes the fiber $\phi^{-1}(i)$.

Observe that the cartesian $\infty$-anti-operad is significantly simpler to define than the cartesian $\infty$-operad. Note also that $(\Deltaup^0)_{\times}=\N\Lambdaup(\FF)^{\op}$.
\end{exm}

It is extremely useful to note that the condition that an $\infty$-operad $C^{\otimes}$ be a symmetric monoidal $\infty$-category can be broken into two conditions:
\begin{enumerate}[(1)]
\item The first of these is \emph{corepresentability} \cite[Df. 6.2.4.3]{HA}; this is the condition that the functors $\Map^{\xi_I}_{C^{\otimes}}(x_I,-)\colon\fromto{C}{\Top}$ be corepresentable, where $\xi_I$ is the unique active map $\fromto{I}{\ast}$ in $\Lambdaup(\FF)$. A compact expression of this is simply to say (as Lurie does) that the inner fibration $\fromto{C^{\otimes}}{N\Lambdaup(\FF)}$ is locally cocartesian.
\item The second condition is \emph{symmetric promonoidality}. This can be expressed in a number of ways. One may say that for any active map $\phi\colon\fromto{J}{I}$ of $\Lambdaup(\FF)$, for any object $x_J\in C^{\otimes}_J$, and for any object $z\in C$, the natural map
\[\fromto{\int^{y_I\in C^{\otimes}_I} \Map^{\xi_I}_{C^{\otimes}}(y_I,z)\times\Map^{\phi}_{C^{\otimes}}(x_J, y_I)}{\Map^{\xi_J}_{C^{\otimes}}(x_J, z)}\]
is an equivalence; this is an operadic version of the condition expressed in \cite[Ex. 6.2.4.9]{HA}. Equivalently, $C^{\otimes}$ is a symmetric promonoidal $\infty$-category if it represents a commutative algebra object in the $\infty$-category of $\infty$-categories and profunctors. In light of \cite[\S B.3]{HA}, we make the following definition.
\end{enumerate}

\begin{dfn} We will say that an $\infty$-operad $C^{\otimes}$ is \textbf{\emph{symmetric promonoidal}} if the structure map $\fromto{C^{\otimes}}{N\Lambdaup(\FF)}$ is a flat inner fibration \cite[Df. B.3.8]{HA}. Similarly, we will say that an $\infty$-anti-operad $C_{\otimes}$ is \textbf{\emph{symmetric promonoidal}} if the structure map $\fromto{C_{\otimes}}{N\Lambdaup(\FF)^{\op}}$ is a flat inner fibration.
\end{dfn}

Our claim now is that the conjunction of these two conditions are equivalent to the condition that $C^{\otimes}$ be a symmetric monoidal $\infty$-category. That is, we claim that a symmetric monoidal $\infty$-category is \emph{precisely} a corepresentable symmetric promonoidal $\infty$-category. This follows immediately from the following.
\begin{prp}\label{prp:flatloccocartiscocart} The following are equivalent for an inner fibration $p\colon\fromto{X}{S}$.
\begin{enumerate}[(\ref{prp:flatloccocartiscocart}.1)]
\item The inner fibration $p$ is flat and locally cocartesian.
\item The inner fibration $p$ is cocartesian.
\end{enumerate}
\begin{proof} The second condition implies the first by \cite[Ex. B.3.11]{HA}. Let us show that the first condition implies the second. By \cite[Pr. 2.4.2.8]{HTT}, it suffices to consider the case in which $S=\Deltaup^2$, and to show that for any section of $p$ given by a commutative triangle
\begin{equation*}
\begin{tikzpicture}[baseline]
\matrix(m)[matrix of math nodes,
row sep=4ex, column sep=4ex,
text height=1.5ex, text depth=0.25ex]
{ &y&  \\
x && z \\ };
\path[>=stealth,->,font=\scriptsize]
(m-2-1) edge[inner sep=0.5pt] node[above left]{$f$} (m-1-2)
edge node[below]{$h$} (m-2-3)
(m-1-2) edge[inner sep=0.5pt] node[above right]{$g$} (m-2-3);
\end{tikzpicture}
\end{equation*}
in which $f$ and $g$ are locally $p$-cocartesian, the edge $h$ is locally $p$-cocartesian as well.

In this case, by \cite[Cor. 3.3.1.2]{HTT}, we can find a cocartesian fibration $q\colon\fromto{Y}{\Deltaup^2}$ along with an equivalence
\[\phi\colon\equivto{X \times_{\Deltaup^2} \Lambdaup^2_1}{Y \times_{\Deltaup^2} \Lambdaup^2_1}\]
of cocartesian fibrations over $\Lambda^2_1$. Now since $p$ is flat, the inclusion $\into{X \times_{\Deltaup^2} \Lambdaup^2_1}{X}$ is a categorical equivalence over $\Deltaup^2$. Consequently, we may lift to obtain a map $\psi\colon\fromto{X}{Y}$ over $\Deltaup^2$ extending $\phi$. This map is a categorical equivalence since both $p$ and $q$ are flat.

Now $\psi(f)=\phi(f)$ and $\psi(g)=\phi(g)$ are $q$-cocartesian, whence so is $\psi(h)$. The stability of relative colimits under categorical equivalences \cite[Pr. 4.3.1.6]{HTT}, in light of \cite[Ex. 4.3.1.4]{HTT}, now implies that $h$ is $p$-cocartesian.
\end{proof}
\end{prp}

One reason to treasure symmetric promonoidal structures is the fact that, as we shall now prove, they are precisely the structure needed on an $\infty$-category $C$ in order for $\Fun(C,D)$ to admit a \emph{Day convolution} symmetric monoidal structure.\footnote{We would like to acknowledge that Dylan Wilson has independently made this observation.}

To explain, suppose first $C^{\otimes}$ a small symmetric monoidal $\infty$-category, and suppose $D^{\otimes}$ a symmetric monoidal $\infty$-category such that $D$ admits all colimits, and the tensor product preserves colimits separately in each variable. In \cite{GlasmanDay}, Glasman constructs a symmetric monoidal structure on the functor $\infty$-category $\Fun(C,D)$ which is the natural $\infty$-categorical generalization of Day's convolution product. As in Day's construction, the convolution $F\otimes G$ of two functors $F,G\colon\fromto{C}{D}$ in Glasman's symmetric monoidal structure is given by the left Kan extension of the composite
\begin{equation*}
C\times C\ \tikz[baseline]\draw[>=stealth,->,font=\scriptsize](0,0.5ex)--node[above]{$(F,G)$}(0.9,0.5ex);\ D\times D\ \tikz[baseline]\draw[>=stealth,->,font=\scriptsize](0,0.5ex)--node[above]{$\otimes$}(0.65,0.5ex);\ D
\end{equation*}
along the tensor product $\otimes\colon\fromto{C\times C}{C}$.

In particular, for any finite set $I$, and for any $I$-tuple $\{F_i\}_{i\in I}$ of functors $\fromto{C}{D}$, the value of the tensor product is given by the coend
\[\left(\bigotimes_{i\in I}F_i\right)(x)\simeq\int^{u_I\in C^{\otimes}_I}\Map^{\xi_I}_{C^{\otimes}}(u_I,x)\otimes\bigotimes_{i\in I}F_i(u_i).\]

Equivalently, the Day convolution on $\Fun(C,D)$ is the essentially unique symmetric monoidal structure that enjoys the following criteria:
\begin{itemize}
\item The tensor product
\[-\otimes-\colon\fromto{\Fun(C,D)\times\Fun(C,D)}{\Fun(C,D)}\]
preserves colimits separately in each variable.
\item The functor given by the composite
\[C^{\op}\times D\ \tikz[baseline]\draw[>=stealth,->,font=\scriptsize](0,0.5ex)--node[above]{$j\times\id$}(0.75,0.5ex);\ \Fun(C,\Kan)\times D\ \tikz[baseline]\draw[>=stealth,->,font=\scriptsize](0,0.5ex)--node[above]{$m$}(0.5,0.5ex);\ \Fun(C,D)\]
is symmetric monoidal, where $j$ denotes the Yoneda embedding, and $m$ is the functor corresponding to the composition
\[\Fun(C,\Kan)\to\Fun(D\times C,D\times\Kan)\to\Fun(D\times C,D)\]
in which the first functor is the obvious one, and the functor $\fromto{D\times\Kan}{D}$ is the tensor functor $\goesto{(X,K)}{X\otimes K}$ of \cite[\S 4.4.4]{HTT}.
\end{itemize}

Conveniently, we can extend Glasman's Day convolution to situations in which $C^{\otimes}$ is only symmetric promonoidal.
\begin{prp} For any symmetric promonoidal $\infty$-category $C^{\otimes}$ and any symmetric monoidal $\infty$-category $D^{\otimes}$ such that $D$ admits all colimits and $\otimes\colon\fromto{D\times D}{D}$ preserves colimits separately in each variable, $\Fun(C,D)$ admits a symmetric monoidal structure such that the $E_{\infty}$-algebras therein are morphisms of $\infty$-operads $\fromto{C^{\otimes}}{D^{\otimes}}$.
\begin{proof} The results of the first two sections of \cite{GlasmanDay} hold when $C^{\otimes}$ is symmetric promonoidal with only one change: in the proof of \cite[Lm. 2.3]{GlasmanDay}, the reference to \cite[Pr. 3.3.1.3]{HTT} should be replaced with a reference to \cite[Pr. B.3.14]{HA}. Consequently, our claim follows from \cite[Prs. 2.11 and 2.12]{GlasmanDay}.
\end{proof}
\end{prp}

\begin{nul} Once again, for any finite set $I$, and for any $I$-tuple $\{F_i\}_{i\in I}$ of functors $\fromto{C}{D}$, the value of the tensor product is given by the coend
\[\left(\bigotimes_{i\in I}F_i\right)(x)\simeq\int^{u_I\in C^{\otimes}_I}\Map^{\xi_I}_{C^{\otimes}}(u_I,x)\otimes\bigotimes_{i\in I}F_i(u_i).\]
\end{nul}


\section{The symmetric promonoidal structure on the effective Burnside $\infty$-category} Suppose $C$ a disjunctive $\infty$-category. The product on $C$ does not induce the product on the effective Burnside $\infty$-category $A^{\eff}(C)$. (Indeed, recall that the effective Burnside $\infty$-category admits direct sums, and these direct sums are induced by the \emph{coproduct} in $C$.) However, a product on $C$ (if it exists) \emph{does} induce a symmetric monoidal structure on $A^{\eff}(C)$. The construction of the previous example is just what we need to describe this structure, and it will work for a broad class of disjunctive triples -- which we call \emph{cartesian} -- as well.

It turns out to be convenient to consider situations in which $C$ does not actually have products. In this case, the effective Burnside $\infty$-category $A^{\eff}(C)$ admits not a symmetric monoidal structure, but only a symmetric promonoidal structure, which suffices to get the Day convolution on $\infty$-categories of Mackey functors.

\begin{ntn}\label{ntn:triplestronCtimes} Suppose $(C,C_{\dag},C^{\dag})$ a disjunctive triple. We now define a triple structure $(C_{\times},(C_{\times})_{\dag},(C_{\times})^{\dag})$ on $C_{\times}$ in the following manner. A morphism
\begin{equation*}
(\phi,\omega)\colon\fromto{(I,X)}{(J,Y)}
\end{equation*}
of $C_{\times}$ will be ingressive just in case $\phi$ is a bijection, and each morphism
\begin{equation*}
\omega_j\colon\fromto{X_{\phi(j)}}{Y_j}
\end{equation*}
is ingressive. The morphism $(\phi,\omega)$ will be egressive just in case each morphism
\begin{equation*}
\omega_j\colon\fromto{X_{\phi(j)}}{Y_j}
\end{equation*}
is egressive (with no condition on $\phi$).
\end{ntn}

It is a trivial matter to verify the following.
\begin{lem} Suppose $(C,C_{\dag},C^{\dag})$ a left complete disjunctive triple. Then the triple
\begin{equation*}
(C_{\times},(C_{\times})_{\dag},(C_{\times})^{\dag})
\end{equation*}
is adequate in the sense of \cite[Df. 5.2]{M1}.
\end{lem}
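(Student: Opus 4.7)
The plan is to unpack \cite[Df.~5.2]{M1} and verify its two parts for the triple of \ref{ntn:triplestronCtimes}: first, that the ingressives and egressives form broad subcategories containing all equivalences; second, that every cospan with one ingressive and one egressive leg admits a pullback whose new legs are egressive and ingressive respectively, stably under further pullback.

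The subcategory axioms are immediate from the definition. The ingressive and egressive conditions on a morphism $(\phi,\omega)$ each decompose as (a) a condition on the underlying index map $\phi\in\Lambdaup(\FF)$ — that it be a bijection on the ingressive side, unrestricted on the egressive side — and (b) a pointwise condition on the components $\omega_j$ interpreted in $(C,C_{\dag},C^{\dag})$. Both (a) and (b) are visibly closed under composition and contain identities, and the equivalences in $C_{\times}$ are exactly those morphisms with $\phi$ an index bijection and each $\omega_j$ an equivalence in $C$, so they lie in both subcategories.

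For the pullback axiom, I would construct the pullback of a cospan
\[(I,X)\xrightarrow{(\phi,\omega)}(J,Y)\xleftarrow{(\psi,\tau)}(K,Z)\]
(first map ingressive, second egressive) componentwise. The underlying square in $\Lambdaup(\FF)$ is the pullback of an index iso (arising from the bijection $\phi$) against an arbitrary index map ($\psi$), which is computed by relabeling and produces a new index set $L$ together with the two leg maps. For each $\ell\in L$ where both a pointwise ingressive and a pointwise egressive are defined, I would take the pullback in $C$ of the corresponding $\omega$ along the corresponding $\tau$; these exist and their legs land in the correct classes because $(C,C_{\dag},C^{\dag})$ is disjunctive, hence itself adequate. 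The left completeness hypothesis on $(C,C_{\dag},C^{\dag})$ enters precisely to handle coherently those components where only one of the two pointwise maps is defined (i.e.\ indices mapping to the added base-point of $I_{+}$ or $J_{+}$), where "missing" pushforwards must be supplied by initial objects.

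Verification that this componentwise construction yields a bona fide pullback in the $\infty$-category $C_{\times}$ — and stably so under further base change — reduces to the analogous componentwise statement in $C$, since by the formula $C_{\times}=((C^{\op})^{\sqcup})^{\op}$ the limits in $C_{\times}$ of diagrams with fixed underlying index shape decompose as products, indexed by the index set, of the corresponding limits in $C$. I expect the chief obstacle to be a purely bookkeeping one: tracking the interplay between the index maps in $\Lambdaup(\FF)$ — where ingressives project to isomorphisms but egressives may vary arbitrarily — and the pointwise components under iterated pullback, so as to certify that every newly formed leg falls into its intended class.
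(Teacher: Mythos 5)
Your overall plan (unpack adequacy into the subcategory axioms plus existence and stability of ambigressive pullbacks, and verify componentwise) is the natural one, and your treatment of the subcategory axioms is fine; the paper itself records no argument, calling the verification trivial. But there is a genuine gap in your pullback step, located exactly where you have misassigned the role of left completeness. Consider the cospan $(I,X)\xrightarrow{(\phi,\omega)}(J,Y)\xleftarrow{(\psi,\tau)}(K,Z)$ with the first leg ingressive and the second egressive. Since $\phi$ is a bijection, the pullback must have index set (equivalent to) $K$; but the egressive leg's index map $\psi\colon J\to K_{+}$ need not be injective, so a single component $Z_k$ carries one egressive $\tau_j\colon Z_k\to Y_j$ for \emph{every} $j\in\psi^{-1}(k)$, and each of these must be pulled back against the corresponding ingressive $\omega_j\colon X_{\phi(j)}\to Y_j$. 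The $k$-th component of the pullback is therefore the \emph{iterated} fiber product $Z_k\times_{Y_{j_1}}X_{\phi(j_1)}\times_{Y_{j_2}}X_{\phi(j_2)}\times\cdots$, not a single pullback of ``the'' $\omega$ along ``the'' $\tau$. Mere adequacy of $(C,C_{\dag},C^{\dag})$ does not furnish this: after the first step the map $Z_k\times_{Y_{j_1}}X_{\phi(j_1)}\to Z_k\to Y_{j_2}$ is a composite of a (pulled-back) ingressive with an egressive and need not be egressive, so you cannot iterate ambigressive pullbacks. This is precisely what the left completeness hypothesis buys — see Rk.~\ref{rem:explicitAeffotimes}, which notes that ``the left completeness is used to show that this iterated fiber product exists''; those are the very same fiber products, since composition in $A^{\eff}(C_{\times},(C_{\times})_{\dag},(C_{\times})^{\dag})$ is computed by the ambigressive pullbacks in $C_{\times}$ whose existence this lemma asserts. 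One must also use the hypothesis to check that the legs land in the right classes: each projection $W_k\to Z_k$ (a composite of pullbacks of the $\omega_j$) must be ingressive, and each projection $W_k\to X_{\phi(j)}$ must be egressive — the ``bookkeeping'' you defer is where this content sits.

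By contrast, the job you assign to left completeness — supplying initial objects at indices hitting the added basepoint — is not needed at all: a morphism of $C_{\times}$ has components $\omega_j$ only for $j\in\phi^{-1}(I)$, so basepoint indices contribute neither data nor conditions to the universal property, and nothing has to be ``filled in'' there. Relatedly, your appeal to $C_{\times}=((C^{\op})^{\sqcup})^{\op}$ to say that the limit decomposes as a product of limits in $C$ indexed by the index set is only correct once one observes that the factor at $k$ is the limit of a diagram indexed by $\{k\}\cup\psi^{-1}(k)$ — again an iterated fiber product rather than a binary pullback. With that correction, and with left completeness doing the work just described, your argument goes through.
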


In particular, for any left complete disjunctive triple $(C,C_{\dag},C^{\dag})$, one may consider the effective Burnside $\infty$-category
\[A^{\eff}(C_{\times},(C_{\times})_{\dag},(C_{\times})^{\dag}).\]

\begin{exm}\label{exm:AeffofDeltazeroisLambdaFop} Note in particular that
\begin{equation*}
((\Deltaup^0)_{\times},((\Deltaup^0)_{\times})_{\dag},((\Deltaup^0)_{\times})^{\dag})\simeq(\N\Lambdaup(\FF)^{\op},\iota \N\Lambdaup(\FF)^{\op},\N\Lambdaup(\FF)^{\op}),
\end{equation*}
whence one proves easily that the inclusion
\begin{equation*}
\into{\N\Lambdaup(\FF) \simeq(((\Deltaup^0)_{\times})^{\dag})^{\op}}{A^{\eff}((\Deltaup^0)_{\times},((\Deltaup^0)_{\times})_{\dag},((\Deltaup^0)_{\times})^{\dag})}
\end{equation*}
is an equivalence.
\end{exm}

We'll use the following pair of results. They follow the same basic pattern as \cite[Lms. 11.4 and 11.5]{M1}; in particular, they too follow immediately from the first author's ``omnibus theorem'' \cite[Th. 12.2]{M1}.

\begin{lem}\label{lm:ACCtoNLFisinner} Suppose $(C,C_{\dag},C^{\dag})$ a left complete disjunctive triple. Then the natural functor
\begin{equation*}
\fromto{A^{\eff}(C_{\times},(C_{\times})_{\dag},(C_{\times})^{\dag})}{A^{\eff}((\Deltaup^0)_{\times},((\Deltaup^0)_{\times})_{\dag},((\Deltaup^0)_{\times})^{\dag})}
\end{equation*}
is an inner fibration.
\end{lem}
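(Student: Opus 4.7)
The plan is to present this as a direct application of the first author's ``omnibus theorem'' [M1, Th.~12.2], in the same spirit as [M1, Lms.~11.4--5]. First, I would exhibit the projection $p\colon C_{\times} \to (\Deltaup^0)_{\times}$ as a morphism of adequate triples from $(C_{\times},(C_{\times})_{\dag},(C_{\times})^{\dag})$ to the triple identified in Example \ref{exm:AeffofDeltazeroisLambdaFop}, namely $(\N\Lambdaup(\FF)^{\op}, \iota\N\Lambdaup(\FF)^{\op}, \N\Lambdaup(\FF)^{\op})$. Compatibility with the triple structure is immediate from Notation \ref{ntn:triplestronCtimes}: an ingressive morphism in $C_{\times}$ lies over a bijection of finite sets, i.e.\ an equivalence in $\N\Lambdaup(\FF)^{\op}$, which is the only ingressive type in the codomain triple; and the egressive condition in the domain imposes no constraint on the base morphism, so all egressives map to egressives. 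The functor induced on effective Burnside $\infty$-categories then coincides with the one in the statement (using the identification of Example \ref{exm:AeffofDeltazeroisLambdaFop}).

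Next, I would verify the hypotheses of the inner-fibration clause of the omnibus theorem. These amount to (i)~the underlying functor $p$ itself being an inner fibration, (ii)~a lifting property for ingressive morphisms, (iii)~a lifting property for egressive morphisms, and (iv)~the existence and preservation of ambigressive pullbacks lying over ambigressive pullbacks in the base. For (i), note that $p^{\op}\colon (C^{\op})^{\sqcup} \to \N\Lambdaup(\FF)$ is a cocartesian fibration because it is the structure map of the cocartesian $\infty$-operad on $C^{\op}$; hence $p$ is a cartesian fibration, and in particular an inner fibration. For (ii) and (iii), both triple classes on $C_{\times}$ are defined coordinatewise from those of $C$, so the required lifts assemble from entirely trivial fiberwise data over any base morphism with the prescribed combinatorics (for ingressives, this is a bijection of index sets, so the lifts are literally coordinatewise). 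For (iv), an ambigressive square in $\N\Lambdaup(\FF)^{\op}$ has its ingressive edges equivalences, so the lifting problem decomposes into producing coordinatewise pullbacks of egressives along ingressives in $C$; these exist and are again egressive-by-ingressive precisely because $(C,C_{\dag},C^{\dag})$ is left complete disjunctive.

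The main obstacle, such as it is, is bookkeeping: tracing how the omnibus theorem's hypotheses decompose along the fibers of a map $\phi\colon J \to I_{+}$ of finite sets, and checking that the required lifts and pullbacks assemble from the coordinate data. Once this dictionary is established, the verification is mechanical, and the lemma follows immediately — as the authors already advertise.
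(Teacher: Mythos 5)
Your overall strategy---regard the projection $C_{\times}\to(\Deltaup^0)_{\times}$ as a morphism of adequate triples and feed it to the omnibus theorem of \cite[Th. 12.2]{M1}---is precisely what the paper does (its entire proof is the sentence preceding the two lemmas), and your first paragraph, identifying the target triple via Ex. \ref{exm:AeffofDeltazeroisLambdaFop} and checking that ingressives/egressives are preserved, is correct. The gap is in the hypothesis check. You claim that $p^{\op}\colon(C^{\op})^{\sqcup}\to\N\Lambdaup(\FF)$ is a cocartesian fibration because it is the structure map of the cocartesian $\infty$-operad, hence that $p$ is a cartesian fibration. That holds only when $C^{\op}$ admits finite coproducts, i.e.\ when $C$ admits finite products \cite[Rk. 2.4.3.4]{HA}; under the lemma's hypotheses $C$ is merely a left complete disjunctive triple, and the entire point of the promonoidal formalism in this section is that $C$ need not have products (the cartesian case is only imposed later, in Df. \ref{dfn:cartesian} and Pr. \ref{prp:cartesiancocart}). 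In general the structure map of an $\infty$-(anti-)operad is an inner fibration with cartesian lifts only over inert edges, so the conclusion you want from (i) survives, but the reasoning does not---and the same misconception undermines (iii): a $p$-cartesian lift of a general egressive edge of $(\Deltaup^0)_{\times}$ with prescribed target $(J,Y)$ is exactly an edge exhibiting the products $\prod_{j\in J_i}Y_j$, so such lifts are not ``entirely trivial fiberwise data'' and need not exist at all. Thus if the ``inner-fibration clause'' you intend is the adequate-inner-fibration package of \cite[Df. 10.3]{M1} (the input to \cite[Lms. 11.4--11.5]{M1}), the projection fails its hypotheses for a general left complete disjunctive triple; note also that even in the cartesian case the triple of Nt. \ref{ntn:triplestronCtimes} is not the unfurling-type triple (all edges over ingressives, $p$-cartesian edges over egressives) to which those statements refer, so a genuinely black-box application requires more care than your sketch (or, admittedly, the paper's one-liner) supplies.

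What actually powers the lemma is not any fibrancy of $p$ over egressive edges but the relative existence of the ambigressive fiber products visible in Rk. \ref{rem:explicitAeffotimes}: filling an inner horn requires producing, coordinatewise, the fiber product over $U_i$ of the finite family $\{V_j\times_{Y_j}U_i\ |\ j\in J_i\}$ with the prescribed legs ingressive and egressive. This is a finite \emph{iterated} fiber product, not just the binary pullback of an egressive along an ingressive that your step (iv) mentions, and it is exactly what left completeness is there to provide. So the correct execution of your plan is to run the horn-filling pattern of \cite[\S 12]{M1} for the adequate triple $(C_{\times},(C_{\times})_{\dag},(C_{\times})^{\dag})$ over $(\N\Lambdaup(\FF)^{\op},\iota\N\Lambdaup(\FF)^{\op},\N\Lambdaup(\FF)^{\op})$, using adequacy together with these iterated ambigressive pullbacks and the inner-fibration property of the anti-operad structure map; delete the cartesian-fibration shortcut, and expect (co)cartesian lifts only over inert edges---which is all that the companion Lemma \ref{lm:ACCtoNLFiscocart} asserts.
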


\begin{lem}\label{lm:ACCtoNLFiscocart} Suppose $(C,C_{\dag},C^{\dag})$ a left complete disjunctive triple. Then for any object $Y$ of $C_{\times}$ lying over an object $J\in(\Deltaup^0)_{\times}$ and any inert morphism $\phi\colon\fromto{I}{J}$ of $N\Lambda(\FF)$, there exists a cocartesian edge $\fromto{Y}{X}$ for the inner fibration
\begin{equation*}
\fromto{A^{\eff}(C_{\times},(C_{\times})_{\dag},(C_{\times})^{\dag})}{A^{\eff}((\Deltaup^0)_{\times},((\Deltaup^0)_{\times})_{\dag},((\Deltaup^0)_{\times})^{\dag})}
\end{equation*}
lying over the image of $\phi$ under the equivalence of Ex. \ref{exm:AeffofDeltazeroisLambdaFop}.
\end{lem}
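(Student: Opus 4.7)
The plan is to exhibit an explicit candidate cocartesian lift and verify its universality via the omnibus theorem \cite[Th. 12.2]{M1}, exactly paralleling the proof of the preceding Lemma \ref{lm:ACCtoNLFisinner}.

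First I would unwind what the inert edge looks like in the Burnside category. Under the equivalence of Example \ref{exm:AeffofDeltazeroisLambdaFop}, an inert morphism $\phi\colon I\to J$ of $\N\Lambdaup(\FF)$ (i.e.\ a map $I\to J_{+}$ in $\FF$ whose restriction $\phi^{-1}(J)\to J$ is a bijection) corresponds to a span
\[J \xleftarrow{\id} J \xrightarrow{\widetilde\phi} I\]
in $(\Deltaup^0)_{\times}$, with $\widetilde\phi$ the morphism of $(\Deltaup^0)_{\times}=\N\Lambdaup(\FF)^{\op}$ opposite to $\phi$. The left leg is ingressive (a bijection) and the right leg is egressive (no condition on the underlying map of finite sets).

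Next I would construct the lift. Given $Y=(J,\{Y_j\}_{j\in J})$, define $X=(I,\{X_i\}_{i\in I})$ by
\[X_i=Y_{\phi(i)}\quad\text{for } i\in\phi^{-1}(J),\qquad X_i=\varnothing\quad\text{for } i\in I\smallsetminus\phi^{-1}(J),\]
where $\varnothing$ is the initial object of $C$, which exists because $C$ is disjunctive. The candidate cocartesian edge is the span in $C_{\times}$
\[(J,Y)\xleftarrow{\id}(J,Y)\xrightarrow{(\widetilde\phi,\{\id\})}(I,X),\]
whose right leg has component morphisms given by the identities $Y_{\phi(i)}\to X_i$ for $i\in\phi^{-1}(J)$. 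The left leg is ingressive because its underlying finite-set map is a bijection and its components are identities; the right leg is egressive because identities are always egressive. This span lies over the base span of Step 1.

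Finally, to check that this edge is locally $p$-cocartesian, I would invoke the omnibus theorem to translate the lifting property into the condition that, for any extension of the base span to a $2$-simplex in $A^{\eff}((\Deltaup^0)_{\times},\dots)$ together with any lift of the composite edge, there is a contractible space of filling spans in $A^{\eff}(C_{\times},\dots)$. As in \cite[Lms. 11.4 and 11.5]{M1}, the omnibus theorem reduces this to checking an ambigressive pullback condition on squares in the triple $(C_{\times},(C_{\times})_{\dag},(C_{\times})^{\dag})$; because the ingressive leg of our lift is an identity and the egressive leg is assembled coordinatewise from identities in $C$, each such square decomposes as a product (indexed over $\phi^{-1}(J)\subseteq I$, with trivial contribution from $i\notin\phi^{-1}(J)$ since $\varnothing$ is initial) of trivial ambigressive squares in $C$, which are vacuously pullbacks. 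The main bookkeeping obstacle is keeping track of how the indexing finite sets interact with the componentwise triple structure of Notation \ref{ntn:triplestronCtimes}; once one observes that an inert $\phi$ makes the lift depend only on the coordinates in $\phi^{-1}(J)$, so that left completeness and disjunctivity of $C$ supply everything one needs on the remaining coordinates, the verification is purely formal.
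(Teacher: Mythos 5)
There is a genuine error in the candidate edge: you have reversed the span conventions of the effective Burnside $\infty$-category. In $A^{\eff}$ of a triple, a morphism from $a$ to $b$ is a span whose leg toward the \emph{source} $a$ is egressive and whose leg toward the \emph{target} $b$ is ingressive (see Rk.~\ref{rem:explicitAeffotimes}, or the identification $\Map_{A^{\eff}(C,C_{\dag},C^{\dag})^{\otimes}}((J,Y_J),(\ast,X))\simeq\iota C'_{/\{Y_j\ ;\ X\}_{j\in J}}$), and in the triple $(\N\Lambdaup(\FF)^{\op},\iota\N\Lambdaup(\FF)^{\op},\N\Lambdaup(\FF)^{\op})$ of Ex.~\ref{exm:AeffofDeltazeroisLambdaFop} the ingressive morphisms are only the bijections. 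Hence the image of the inert $\phi\colon\fromto{I}{J}$ under that equivalence is the span $I\xleftarrow{\widetilde\phi}J\xrightarrow{=}J$ (egressive leg the morphism of $(\Deltaup^0)_{\times}$ corresponding to $\phi$, ingressive leg an identity), a morphism with source $I$ and target $J$. Your span $J\xleftarrow{\id}J\xrightarrow{\widetilde\phi}I$ is not this image; in fact it is not a morphism of $A^{\eff}((\Deltaup^0)_{\times},\dots)$ at all, since its forward leg $\widetilde\phi$ is not ingressive. The same defect occurs upstairs: the forward leg $(J,Y)\to(I,X)$ of your candidate has underlying map of finite sets $\phi$, which is not a bijection for a general inert $\phi$, so by Nt.~\ref{ntn:triplestronCtimes} it is not ingressive, and your diagram is not an edge of $A^{\eff}(C_{\times},(C_{\times})_{\dag},(C_{\times})^{\dag})$, let alone one lying over the image of $\phi$. (The lemma's own labels are partly to blame: for ``lying over the image of $\phi$'' to parse, the given object must lie over the \emph{source} of $\phi$, as in the application in the proposition that follows; keeping $Y$ over the target and compensating by inserting initial objects does not repair this.)

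The correct cocartesian lift is a coordinate \emph{restriction}, not an extension by $\varnothing$: given $Y=(I,(Y_i)_{i\in I})$ over the source $I$ of $\phi$, set $X\coloneq(J,(Y_{\phi^{-1}(j)})_{j\in J})$ and take the span $(I,Y)\leftarrow(J,X)\xrightarrow{=}(J,X)$ whose egressive leg lies over $\phi$ and has identity components; the coordinates outside $\phi^{-1}(J)$ are simply forgotten, no initial objects occur, and left completeness plays no role in producing the edge. These are precisely the edges displayed in the subsequent proposition (the cocartesian lifts over the inert maps $\rho^{j}$). Once the correct edge is named, your final step --- verifying cocartesianness by appeal to the omnibus theorem \cite[Th.~12.2]{M1} --- is exactly what the paper does (its proof of this lemma is nothing more than that citation); but as written, your proposal verifies universality for an edge that does not exist in the Burnside $\infty$-category.
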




Now we can go about defining the symmetric promonoidal structure on the effective Burnside $\infty$-category of a disjunctive triple.
\begin{ntn} For any disjunctive triple $(C,C_{\dag},C^{\dag})$, we define $A^{\eff}(C,C_{\dag},C^{\dag})^{\otimes}$ as the pullback
\begin{equation*}
A^{\eff}(C,C_{\dag},C^{\dag})^{\otimes}\coloneq A^{\eff}(C_{\times},(C_{\times})_{\dag},(C_{\times})^{\dag})\times_{A^{\eff}((\Deltaup^0)_{\times},((\Deltaup^0)_{\times})_{\dag},((\Deltaup^0)_{\times})^{\dag})}\N\Lambdaup(\FF),
\end{equation*}
equipped with its canonical projection to $\N\Lambdaup(\FF)$. Note that because the inclusion
\[\into{\N\Lambdaup(\FF)}{A^{\eff}((\Deltaup^0)_{\times},(\Deltaup^0)_{\times,\dag},(\Deltaup^0)_{\times}^{\dag})}\]
is an equivalence, it follows that the projection functor
\begin{equation*}
\fromto{A^{\eff}(C,C_{\dag},C^{\dag})^{\otimes}}{A^{\eff}(C_{\times},(C_{\times})_{\dag},(C_{\times})^{\dag})}
\end{equation*}
is actually an equivalence.
\end{ntn}

\begin{rem}\label{rem:explicitAeffotimes} Suppose $(C,C_{\dag},C^{\dag})$ a disjunctive triple. The objects of the total $\infty$-category $A^{\eff}(C,C_{\dag},C^{\dag})^{\otimes}$ are pairs $(I,X_I)$ consisting of a finite set $I$ and an $I$-tuple $X_I=(X_i)_{i\in I}$ of objects of $C$. A morphism
\begin{equation*}
\fromto{(J,Y_J)}{(I,X_I)}
\end{equation*}
of $A^{\eff}(C,C_{\dag},C^{\dag})^{\otimes}$ can be thought of as a morphism $\phi\colon\fromto{J}{I}$ of $\Lambdaup(\FF)$ and a collection of diagrams
\begin{equation*}
\left\{\ 
\begin{tikzpicture}[baseline]
\matrix(m)[matrix of math nodes, 
row sep={7ex,between origins}, column sep={7ex,between origins}, 
text height=1.5ex, text depth=0.25ex] 
{&U_{\phi(j)}&\\ 
Y_{j}&&X_{\phi(j)},\\}; 
\path[>=stealth,->,font=\scriptsize] 
(m-1-2) edge[->>] (m-2-1) 
edge[>->] (m-2-3); 
\end{tikzpicture}
\quad\middle|\quad j\in \phi^{-1}(I)\ 
\ \right\}
\end{equation*}
such that for any $j\in J$, the morphism $\cofto{U_{\phi(j)}}{X_{\phi(j)}}$ is ingressive, and the morphism
\begin{equation*}
\fibto{U_{\phi(j)}}{Y_j}
\end{equation*}
is egressive.

Composition is then defined by pullback; that is, a $2$-simplex
\begin{equation*}
(K,Z_K)\to (J,Y_J)\to (I,X_I)
\end{equation*}
consists of morphisms $\psi\colon\fromto{K}{J}$ and $\phi\colon\fromto{J}{I}$ of $\Lambdaup(\FF)$ along with a collection of diagrams
\begin{equation*}
\left\{\ 
\begin{tikzpicture}[baseline]
\matrix(m)[matrix of math nodes, 
row sep={7ex,between origins}, column sep={7ex,between origins}, 
text height=1.5ex, text depth=0.25ex] 
{&&W_{\phi(\psi(k))}&&\\
&V_{\psi(k)}&&U_{\phi(\psi(k))}&\\ 
Z_{k}&&Y_{\psi(k)}&&X_{\phi(\psi(k))}\\}; 
\path[>=stealth,->,font=\scriptsize] 
(m-1-3) edge[->>] (m-2-2) 
edge[>->] (m-2-4) 
(m-2-2) edge[->>] (m-3-1)
edge[>->] (m-3-3) 
(m-2-4) edge[->>] (m-3-3)
edge[>->] (m-3-5); 
\end{tikzpicture}
\quad\middle|\quad k\in(\phi\psi)^{-1}(I)\ 
\ \right\}
\end{equation*}
in which the square in the middle exhibits each $W_i$ (for $i\in I$) as the iterated fiber product over $U_i$ of the set of objects $\{V_j\times_{Y_j}U_i\ |\ j\in J_i\}$. (Note that the left completeness is used to show that this iterated fiber product exists.)

In particular, $A^{\eff}(C,C_{\dag},C^{\dag})^{\otimes}_{\{1\}}$ may be identified with the effective Burnside $\infty$-category $A^{\eff}(C,C_{\dag},C^{\dag})$ itself, and for any finite set $I$, the inert morphisms $\chi_{i}\colon\fromto{I}{\{i\}_{+}}$ together induce an equivalence
\begin{equation*}
\equivto{A^{\eff}(C,C_{\dag},C^{\dag})^{\otimes}_I}{\prod_{i\in I}A^{\eff}(C,C_{\dag},C^{\dag})^{\otimes}_{\{i\}}}.
\end{equation*}
\end{rem}

For the proofs of the next few results it is convenient to introduce a bit of notation.

\begin{ntn} Suppose $(C,C_{\dag},C^{\dag})$ a triple, suppose $A$ and $B$ are two sets, and suppose $S\colon\fromto{A\sqcup B}{C}$ a functor. Then let
\[C'_{/\{S_{x}\ ;\ S_{y}\}_{x\in A, y\in B}}\subseteq C_{/\{S_{z}\}_{z\in A\sqcup B}}\]
denote the full subcategory spanned by those objects such that the morphisms to the objects $S_x$ are egressive and the morphisms to the objects $S_y$ are ingressive. In particular, note that
\[\Map_{A^{\eff}(C,C_{\dag},C^{\dag})^{\otimes}}((J,Y_J),(\ast,X))\simeq\iota C'_{/\{Y_j\ ;\ X\}_{j\in J}}.\]
\end{ntn}

We have almost proven the following.
\begin{prp} For any left complete disjunctive triple $(C,C_{\dag},C^{\dag})$, the inner fibration 
\begin{equation*}
\fromto{A^{\eff}(C,C_{\dag},C^{\dag})^{\otimes}}{\N\Lambdaup(\FF)}
\end{equation*}
is an $\infty$-operad.
\end{prp}
\begin{proof} Following Rk. \ref{rem:explicitAeffotimes}, it only remains to show that given an edge $\alpha \colon \fromto{I}{J}$ in $\N\Lambdaup(\FF)$ and objects $(I,X), (J,Y)$ in $A^{\eff}(C,C_{\dag},C^{\dag})^{\otimes}$, the cocartesian edges 

\begin{equation*}
\begin{tikzpicture}[baseline]
\matrix(m)[matrix of math nodes, 
row sep={7ex,between origins}, column sep={7ex,between origins}, 
text height=1.5ex, text depth=0.25ex] 
{ &(\ast,Y_j)& \\ 
(J,Y) && (\ast,Y_j),\\}; 
\path[>=stealth,->,font=\scriptsize] 
(m-1-2) edge[-,double distance=1.5pt] (m-2-1) 
edge[>->] (m-2-3); 
\end{tikzpicture}
\end{equation*}
over the inert edges $\rho^j \colon \fromto{J}{\ast}$ induce an equivalence 

\[\fromto{\Map_{A^{\eff}(C,C_{\dag},C^{\dag})^{\otimes}}^{\alpha} ((I,X),(J,Y))}{\prod_{j \in J} \Map_{A^{\eff}(C,C_{\dag},C^{\dag})^{\otimes}}^{\rho^j \circ \alpha} ((I,X),(\ast,Y_j))}. \]
But this is indeed true, since the map identifies the left-hand side as
\[ \prod_{j \in J} \iota C'_{/\{ X_i\ ;\ Y_j \}_{i \in \alpha^{-1}(j)} }.\qedhere \]
\end{proof}

We now show that the $\infty$-operad $A^{\eff}(C,C_{\dag},C^{\dag})^{\otimes}$ is symmetric promonoidal.

\begin{prp} \label{lm:ACCtoNLFisflat} Suppose $(C,C_{\dag},C^{\dag})$ a left complete disjunctive triple. Then the $\infty$-operad 
\begin{equation*}
p\colon\fromto{A^{\eff}(C,C_{\dag},C^{\dag})^{\otimes}}{\N\Lambdaup(\FF)}
\end{equation*}
is symmetric promonoidal; that is, $p$ is a flat inner fibration.
\begin{proof} Suppose $\sigma\colon\fromto{\Delta^2}{N\Lambdaup(\FF)}$ a $2$-simplex given by a diagram
\begin{equation*}
\begin{tikzpicture}[baseline]
\matrix(m)[matrix of math nodes,
row sep=4ex, column sep=4ex,
text height=1.5ex, text depth=0.25ex]
{ &J&  \\
I && K \\ };
\path[>=stealth,->,font=\scriptsize]
(m-2-1) edge[inner sep=0.65pt] node[above left]{$ \alpha $} (m-1-2)
edge node[below]{$ \gamma $} (m-2-3)
(m-1-2) edge[inner sep=0.65pt] node[above right]{$ \beta$} (m-2-3);
\end{tikzpicture}
\end{equation*}
a $2$-simplex of $N\Lambdaup(\FF)$. Suppose 
\begin{equation*}
\begin{tikzpicture}[baseline]
\matrix(m)[matrix of math nodes, 
row sep={7ex,between origins}, column sep={7ex,between origins}, 
text height=1.5ex, text depth=0.25ex] 
{ &(K,W)& \\ 
(I,X) && (K,Z),\\}; 
\path[>=stealth,->,font=\scriptsize] 
(m-1-2) edge[->>] (m-2-1) 
edge[>->] (m-2-3); 
\end{tikzpicture}
\end{equation*}
an edge $\widetilde{\gamma}$ of
\[\Sigma\coloneq A^{\eff}(C,C_{\dag},C^{\dag})^{\otimes}\times_{N\Lambdaup(\FF),\sigma}\Delta^2\]
lifting $\gamma$. Set 
\begin{equation*}
E \coloneq \Sigma_{(I,X)/\ /(K,Z)} \times_{N\Lambdaup(\FF)} \{ J \}
\end{equation*}
be the $\infty$-category of factorizations of $\widetilde{\gamma}$ through $\Sigma_J$. Observe that an $n$-simplex of $E$ is a cartesian functor $\widetilde{\mathscr{O}}(\Delta^{n+2})^\op \to (C_{\times},(C_{\times})_{\dag},(C_{\times})^{\dag})$ satisfying certain conditions.

We aim to show that $E$ is weakly contractible. To this end, we will identify a full subcategory $E'\subset E$ whose inclusion functor admits a right adjoint such that $E'$ contains a terminal object.

To begin, let us define a functor $\epsilon\colon E \times \Delta^1 \to E$ extending the projection $\equivto{E \times \{1 \}}{E}$ as follows: given non-negative integers $k \leq n$, let
\[f_{n,k}\colon \widetilde{\mathscr{O}}(\Delta^{n+3}) \to \widetilde{\mathscr{O}}(\Delta^{n+2})\]
be the unique functor which on objects is given by
\begin{equation*}
f_{n,k}(i j) \coloneq\begin{cases}
0 j & \textup{if }i \leq k+1\textup{ and }j \leq k+1;\\
0 (j-1) & \textup{if }i \leq k+1\textup{ and }j > k+1; \\
(i-1) (j-1) & \textup{if } i > k+1.
\end{cases}
\end{equation*} 
Then for every $n$-simplex $\sigma\colon \Delta^n \to E$ corresponding to a functor
\[\overline{\sigma}\colon \widetilde{\mathscr{O}}(\Delta^{n+2})^\op \to C_{\times},\]
define $\epsilon(\sigma)\colon \Delta^n \times \Delta^1 \to E$ to be the unique functor which sends the nondegenerate $(n+1)$-simplex
\begin{equation*} (0,0) \to\cdots \to (0,k) \to (1,k) \to \cdots \to (1,n)
\end{equation*}
to the $(n+1)$-simplex $\Delta^{n+1} \to E$ corresponding to the functor
\begin{equation*}
\overline{\sigma} \circ f_{n k}^\op\colon\widetilde{\mathscr{O}}(\Delta^{n+3})^\op \to C_{\times}.
\end{equation*} 
It is easy (albeit tedious) to verify that the functors $\epsilon(\sigma)$ assemble to yield a unique functor $\epsilon$. Now set
\[R\coloneq\epsilon|_{(E \times \{0 \})}.\]
Given an object $\tau \in E$ displayed as a $2$-simplex
\begin{equation*}
\begin{tikzpicture}[baseline]
\matrix(m)[matrix of math nodes, 
row sep={7ex,between origins}, column sep={7ex,between origins}, 
text height=1.5ex, text depth=0.25ex] 
{&&(K,W)&&\\
&(J,Y_{01})&&(K,Y_{12})&\\ 
(I,X)&&(J,Y)&&(K,Z)\\}; 
\path[>=stealth,->,font=\scriptsize] 
(m-1-3) edge[->>] (m-2-2) 
edge[>->] (m-2-4) 
(m-2-2) edge[->>] (m-3-1)
edge[>->] (m-3-3) 
(m-2-4) edge[->>] (m-3-3)
edge[>->] (m-3-5); 
\end{tikzpicture}
\end{equation*}
of $\Sigma$, the edge $\epsilon_\tau\colon R(\tau) \to \tau$ to be
\begin{equation*}
\begin{tikzpicture}[baseline]
\matrix(m)[matrix of math nodes, 
row sep={7ex,between origins}, column sep={7ex,between origins}, 
text height=1.5ex, text depth=0.25ex] 
{&&&(K,W)&&&\\
&&(J,Y_{01})&&(K,W)&&\\ 
&(J,Y_{01})&&(J,Y_{01})&&(K,Y_{12})&\\
(I,X)&&(J,Y_{01})&&(J,Y)&&(K,Z)\\}; 
\path[>=stealth,->,font=\scriptsize] 
(m-1-4) edge[->>] (m-2-3) 
edge[-,double distance=1.5pt] (m-2-5) 
(m-2-3) edge[-,double distance=1.5pt] (m-3-2)
edge[-,double distance=1.5pt] (m-3-4) 
(m-2-5) edge[->>] (m-3-4)
edge[>->] (m-3-6)
(m-3-2) edge[->>] (m-4-1)
edge[-,double distance=1.5pt] (m-4-3)
(m-3-4) edge[-,double distance=1.5pt] (m-4-3)
edge[>->] (m-4-5)
(m-3-6) edge[->>] (m-4-5)
edge[>->] (m-4-7); 
\end{tikzpicture}
\end{equation*}
From this, it is apparent that the essential image $E'$ of $R$ is the full subcategory spanned by those $\tau \in E$ such that the morphism $(J,Y_{01}) \to (J,Y)$ is an equivalence, and by the dual of \cite[Pr. 5.2.7.4]{HTT}, $R$ is a colocalization functor.

We now define $(J, \overline{W}) \in C_\times$ by
\begin{equation*} \overline{W}_j\coloneq\begin{cases}
W_{\beta(j)} & \textup{if } \beta(j) \neq \ast; \\
\varnothing & \textup{if } \beta(j)= \ast.
\end{cases}
\end{equation*}
There is an obvious factorization of $(K,W) \to (I,X)$ through $(J, \overline{W})$, and we define an object $\omega \in E'$ as
\begin{equation*}
\begin{tikzpicture}[baseline]
\matrix(m)[matrix of math nodes, 
row sep={7ex,between origins}, column sep={7ex,between origins}, 
text height=1.5ex, text depth=0.25ex] 
{&&(K,W)&&\\
&(J,\overline{W})&&(K,W)&\\ 
(I,X)&&(J,\overline{W})&&(K,Z)\\}; 
\path[>=stealth,->,font=\scriptsize] 
(m-1-3) edge[->>] (m-2-2) 
edge[-,double distance=1.5pt] (m-2-4) 
(m-2-2) edge[->>] (m-3-1)
edge[-,double distance=1.5pt] (m-3-3) 
(m-2-4) edge[->>] (m-3-3)
edge[>->] (m-3-5); 
\end{tikzpicture}
\end{equation*}

We now claim that $\omega$ is terminal in $E'$. Let $\tau \in E'$ be any object displayed as a $2$-simplex
\begin{equation*}
\begin{tikzpicture}[baseline]
\matrix(m)[matrix of math nodes, 
row sep={7ex,between origins}, column sep={7ex,between origins}, 
text height=1.5ex, text depth=0.25ex] 
{&&(K,W)&&\\
&(J,Y_{01})&&(K,Y_{12})&\\ 
(I,X)&&(J,Y)&&(K,Z)\\}; 
\path[>=stealth,->,font=\scriptsize] 
(m-1-3) edge[->>] (m-2-2) 
edge[>->] (m-2-4) 
(m-2-2) edge[->>] (m-3-1)
edge[>->] (m-3-3) 
(m-2-4) edge[->>] (m-3-3)
edge[>->] (m-3-5); 
\end{tikzpicture}
\end{equation*}
of $\Sigma$. We have a homotopy pullback square
\begin{equation*}
\begin{tikzpicture}[baseline]
\matrix(m)[matrix of math nodes,
row sep=6ex, column sep=4ex,
text height=1.5ex, text depth=0.25ex]
{ \Map_E(\tau,\omega) & \Map_{\Sigma_{(I,X)/}}(d_2(\tau),d_2(\omega)) \\
\Delta^0 & \Map_{\Sigma_{(I,X)/}}(d_2(\tau),\widetilde{\gamma}) \\ };
\path[>=stealth,->,font=\scriptsize]
(m-1-1) edge (m-1-2)
edge (m-2-1)
(m-1-2) edge node[right]{$\omega_\ast$} (m-2-2)
(m-2-1) edge node[below]{$\tau$} (m-2-2);
\end{tikzpicture}
\end{equation*}
and the terms on the right-hand side are in turn given as homotopy pullbacks
\begin{equation*}
\begin{tikzpicture}[baseline]
\matrix(m)[matrix of math nodes,
row sep=6ex, column sep=4ex,
text height=1.5ex, text depth=0.25ex]
{ \Map_{\Sigma_{(I,X)/}}(d_2(\tau),d_2(\omega)) & \Map_{\Sigma}((J,Y),(J,\overline{W})) \\
\Delta^0 & \Map_{\Sigma}((I,X),(J,\overline{W})), \\ };
\path[>=stealth,->,font=\scriptsize]
(m-1-1) edge (m-1-2)
edge (m-2-1)
(m-1-2) edge node[right]{$d_2(\tau)^\ast$} (m-2-2)
(m-2-1) edge node[below]{$d_2(\omega)$} (m-2-2);
\end{tikzpicture}
\end{equation*}
and
\begin{equation*}
\begin{tikzpicture}[baseline]
\matrix(m)[matrix of math nodes,
row sep=6ex, column sep=4ex,
text height=1.5ex, text depth=0.25ex]
{ \Map_{\Sigma_{(I,X)/}}(d_2(\tau),\widetilde{\gamma}) & \Map_{\Sigma}((J,Y),(K,Z)) \\
\Delta^0 & \Map_{\Sigma}((I,X),(K,Z)). \\ };
\path[>=stealth,->,font=\scriptsize]
(m-1-1) edge (m-1-2)
edge (m-2-1)
(m-1-2) edge node[right]{$d_2(\tau)^\ast$} (m-2-2)
(m-2-1) edge node[below]{$\widetilde{\gamma}$} (m-2-2);
\end{tikzpicture}
\end{equation*}

In light of the equivalence $\equivto{(J,Y_{01})}{(J,Y)}$, we obtain equivalences
\begin{align*}
\Map_{\Sigma}((J,Y),(J,\overline{W})) & \simeq \prod_{j \in J} \iota C'_{/\{(Y_{01})_j\ ;\ \overline{W}_j\}}; \\
\Map_{\Sigma}((I,X),(J,\overline{W})) & \simeq \prod_{j \in J} \iota C'_{/\{X_i\ ;\ \overline{W}_j\}_{i \in \alpha^{-1}(j)}}.
\end{align*}
Under these equivalences the map $d_2(\tau)^\ast$ is given by $\prod_{j \in J} \phi_j$ where
\begin{equation*}
\phi_j\colon \iota C'_{/\{(Y_{01})_j\ ;\ \overline{W}_j\}} \to \iota C'_{/\{X_i\ ;\ \overline{W}_j\}_{i \in \alpha^{-1}(j)}}
\end{equation*}
is defined by postcomposition by the maps $(Y_{01})_j \to X_i$ (with $i \in \alpha^{-1}(j)$). As a corollary of Cor. \ref{cor:pedantry} below, we may factor the square in question into two homotopy pullback squares:
\begin{equation*}
\begin{tikzpicture}[baseline]
\matrix(m)[matrix of math nodes,
row sep=4ex, column sep=4ex,
text height=1.5ex, text depth=0.25ex]
{ \Map_{\Sigma_{(I,X)/}}(d_2(\tau),d_2(\omega)) & \Map_{(C_{\times})^{\dag}_{\id}}((J,\overline{W}),(J,Y_{01})) & \prod_{j \in J} \iota C'_{/\{(Y_{01})_j\ ;\ \overline{W}_j\}} \\
\Delta^0 & \Map_{(C_{\times})^{\dag}_\alpha}((J,\overline{W}),(I,X)) & \prod_{j \in J} \iota C'_{/\{X_i\ ;\ \overline{W}_j\}_{i \in \alpha^{-1}(j)}}. \\ };
\path[>=stealth,->,font=\scriptsize]
(m-1-1) edge (m-1-2)
edge (m-2-1)
(m-1-2) edge (m-1-3)
edge (m-2-2)
(m-1-3) edge (m-2-3)
(m-2-1) edge (m-2-2)
(m-2-2) edge (m-2-3);
\end{tikzpicture}
\end{equation*}
Similarly, we factor the second square into two homotopy pullback squares:
\begin{equation*}
\begin{tikzpicture}[baseline]
\matrix(m)[matrix of math nodes,
row sep=4ex, column sep=4ex,
text height=1.5ex, text depth=0.25ex]
{ \Map_{\Sigma_{(I,X)/}}(d_2(\tau),\widetilde{\gamma}) & \Map_{(C_{\times})^{\dag}_\beta}((K,W),(J,Y_{01})) & \prod_{k \in K} \iota C'_{/\{(Y_{01})_j\ ;\ Z_k\}_{j \in \beta^{-1}(k)}} \\
\Delta^0 & \Map_{(C_{\times})^{\dag}_\gamma}((K,W),(I,X)) & \prod_{k \in K} \iota C'_{/\{X_i\ ;\ Z_k\}_{i \in \gamma^{-1}(k)}} \\ };
\path[>=stealth,->,font=\scriptsize]
(m-1-1) edge (m-1-2)
edge (m-2-1)
(m-1-2) edge (m-1-3)
edge (m-2-2)
(m-1-3) edge (m-2-3)
(m-2-1) edge (m-2-2)
(m-2-2) edge (m-2-3);
\end{tikzpicture}
\end{equation*}

The map $\omega_\ast$ is then seen to be equivalent to the induced map between the fibers of the horizontal maps in the following commutative square:
\begin{equation*}
\begin{tikzpicture}[baseline]
\matrix(m)[matrix of math nodes,
row sep=4ex, column sep=4ex,
text height=1.5ex, text depth=0.25ex]
{ \Map_{(C_{\times})^{\dag}_{\id}}((J,\overline{W}),(J,Y_{01})) & \Map_{(C_{\times})^{\dag}_{\alpha}}((J,\overline{W}),(I,X)) \\
  \Map_{(C_{\times})^{\dag}_{\beta}}((K,W),(J,Y_{01}))  & \Map_{(C_{\times})^{\dag}_{\gamma}}((K,W),(I,X)). \\ };
\path[>=stealth,->,font=\scriptsize]
(m-1-1) edge (m-1-2)
edge (m-2-1)
(m-1-2) edge (m-2-2)
(m-2-1) edge (m-2-2);
\end{tikzpicture}
\end{equation*}
The left vertical map is the equivalence
\begin{equation*}
\equivto{\prod_{j\in\beta^{-1}(K)} \Map_{C^\dag}(W_{\beta(j)},(Y_{01})_j)}{\prod_{k \in K} \prod_{j \in \beta^{-1}(k)} \Map_{C^\dag}(W_k, (Y_{01})_j),}
\end{equation*}
and the right vertical map is the equivalence
\begin{equation*}
\equivto{\prod_{j\in\beta^{-1}(K)} \prod_{i \in \alpha^{-1}(j)} \Map_{C^\dag}(W_{\beta(j)},X_i)}{\prod_{k \in K} \prod_{i \in \gamma^{-1}(k)} \Map_{C^\dag}(W_k, X_i),}
\end{equation*}
so the square is in fact a homotopy pullback square and $\omega_\ast$ is an equivalence. Hence the mapping space $\Map_E(\tau,\omega)$ is contractible and $\omega$ is a terminal object of $E'$. This proves that $E$ is weakly contractible.
\end{proof}
\end{prp}

We digress briefly to give the following proposition, which is useful for studying the interaction of the over and undercategory functors with homotopy colimit diagrams.

\begin{prp} \label{prp:sliceadj} Suppose $C$ an $\infty$-category, and let $s\Set_{/C}$ be endowed with the model structure created by the forgetful functor to $s\Set$ equipped with the Joyal model structure. Then we have a Quillen adjunction
\[ \adjunct{C_{(-)/}}{s\Set_{/C}}{(s\Set_{/C})^\op}{C_{/(-)}}  \]
between the over and undercategory functors.
\end{prp}
\begin{proof} The displayed functors are indeed adjoint to each other, since for objects $\phi \colon \fromto{X}{C}$ and $\psi \colon \fromto{Y}{C}$ we have natural isomorphisms
\[ \Hom_{/C}(X,C_{\psi /}) \cong \Hom_{(X \sqcup Y)/}(X \star Y, C) \cong \Hom_{/C}(Y, C_{/ \phi}). \]
To check that this adjunction is a Quillen adjunction, we check that $C_{(-)/}$ preserves cofibrations and trivial cofibrations. Let $\tau \colon \fromto{\phi}{\phi'}$ be a map in $s\Set_{/C}$, and let $f = d_2(\tau) \colon \fromto{X}{X'}$. If $f$ is a monomorphism, by \cite[2.1.2.1]{HTT} we have that $C_{\phi'/} \to C_{\phi/}$ is a left fibration, hence by \cite[2.4.6.5]{HTT} a categorical fibration. If $f$ is a monomorphism and a categorical equivalence, by \cite[4.1.1.9]{HTT} and \cite[4.1.1.1(4)]{HTT} $f$ is right anodyne, hence by \cite[2.1.2.5]{HTT} $C_{\phi'/} \to C_{\phi/}$ is a trivial Kan fibration.
\end{proof}

\begin{cor} \label{cor:pedantry} Let $C$ be an $\infty$-category and suppose given a morphism $f: x \to y$ in $C$ and a diagram

\begin{equation*}
\begin{tikzpicture}[baseline]
\matrix(m)[matrix of math nodes,
row sep=4ex, column sep=4ex,
text height=1.5ex, text depth=0.25ex]
{ K  & L & C \\
  K \sqcup \Delta^0 & L \sqcup \Delta^0 \\ };
\path[>=stealth,->,font=\scriptsize]
(m-1-1) edge[right hook->] node[above]{$\phi$} (m-1-2)
edge (m-2-1)
(m-1-2) edge (m-2-2)
edge node[above]{$p$} (m-1-3)
(m-2-1) edge[right hook->] node[above]{$\phi'$} (m-2-2)
(m-2-2) edge node[below]{$p'$} (m-1-3);
\end{tikzpicture}
\end{equation*}
of simplicial sets where $\phi' = \phi \sqcup \id$ and $p'|_{\Delta^0}$ selects $y$. Then we have a homotopy pullback square of $\infty$-categories
\begin{equation*}
\begin{tikzpicture}[baseline]
\matrix(m)[matrix of math nodes,
row sep=4ex, column sep=4ex,
text height=1.5ex, text depth=0.25ex]
{ \{ x \} \times_C C_{/p}  & C_{/p'} \\
  \{ x \} \times_C C_{/p\circ \phi} & C_{/p'\circ \phi'} \\ };
\path[>=stealth,->,font=\scriptsize]
(m-1-1) edge node[above]{$F$} (m-1-2)
edge (m-2-1)
(m-1-2) edge (m-2-2)
(m-2-1) edge node[above]{$G$} (m-2-2);
\end{tikzpicture}
\end{equation*}
where the vertical functors are given by change of diagram and the horizontal functors are to be defined.
\begin{proof}
Define the functor $F$ as follows: the datum of an $n$-simplex $\Delta^n \to \{x \} \times_{C} C_{/p}$ consists of a map $\alpha\colon \Delta^n \star L \to C$ which restricts to $p$ on $L$ and to the constant map to $x$ on $\Delta^n$, and we use this to define $\Delta^n \star (L \sqcup \Delta^0) \to C$ to be the unique map which restricts to $\alpha$ on $\Delta^n \star L$ and to
\[\Delta^n \star \Delta^0 \to \Delta^1 \ \tikz[baseline]\draw[>=stealth,->,font=\scriptsize,inner sep=0.8pt](0,0.5ex)--node[above]{$f$}(0.5,0.5ex);\ C\]
on $\Delta^n \star \Delta^0$; this gives the $n$-simplex of $C_{/p'}$. The definition of $G$ is analogous. The square in question then fits into a rectangle
\begin{equation*}
\begin{tikzpicture}[baseline]
\matrix(m)[matrix of math nodes,
row sep=4ex, column sep=4ex,
text height=1.5ex, text depth=0.25ex]
{ \{ x \} \times_C C_{/p}  & C_{/p'} & C_{/p} \\
  \{ x \} \times_C C_{/p\circ \phi} & C_{/p'\circ \phi'} & C_{/p \circ \phi}\\ };
\path[>=stealth,->,font=\scriptsize]
(m-1-1) edge node[above]{$F$} (m-1-2)
edge (m-2-1)
(m-1-2) edge (m-2-2)
edge (m-1-3)
(m-1-3) edge (m-2-3)
(m-2-1) edge node[above]{$G$} (m-2-2)
(m-2-2) edge (m-2-3);
\end{tikzpicture}
\end{equation*}
where the long horizontal functors are given as the inclusion of the fiber over $x$ and the functors in the righthand square are given by change of diagram. By Prp. \ref{prp:sliceadj} and left properness of the Joyal model structure, the righthand square is a homotopy pullback square. The vertical functor $\fromto{C_{/p}}{C_{/p \circ \phi}}$ is a right fibration, so the outermost square is a homotopy pullback square. The conclusion follows.
\end{proof}
\end{cor}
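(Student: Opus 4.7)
The plan is to first give explicit simplex-level definitions of $F$ and $G$, and then to verify the homotopy pullback statement by pasting with a second, more easily analyzed square.

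For the definition of $F$, an $n$-simplex of $\{x\}\times_C C_{/p}$ is a map $\alpha\colon \Delta^n\star L\to C$ whose restriction to $L$ is $p$ and whose restriction to $\Delta^n$ is constant at $x$. Using the natural pushout decomposition $\Delta^n\star(L\sqcup\Delta^0)\cong(\Delta^n\star L)\cup_{\Delta^n}(\Delta^n\star\Delta^0)$, I would glue $\alpha$ along $\Delta^n$ with the composite $\Delta^n\star\Delta^0\cong\Delta^{n+1}\to\Delta^1\to C$ whose last arrow is $f$; this produces a map $\Delta^n\star(L\sqcup\Delta^0)\to C$ extending $p'$, and hence the $n$-simplex $F(\alpha)$ of $C_{/p'}$. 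The definition of $G$ is entirely analogous, with $L$ and $p$ replaced by $K$ and $p\circ\phi$.

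Next, I would enlarge the square of interest into a rectangle
\begin{equation*}
\begin{tikzpicture}[baseline]
\matrix(m)[matrix of math nodes,
row sep=4ex, column sep=4ex,
text height=1.5ex, text depth=0.25ex]
{ \{ x \} \times_C C_{/p}  & C_{/p'} & C_{/p} \\
  \{ x \} \times_C C_{/p\circ \phi} & C_{/p'\circ \phi'} & C_{/p \circ \phi}\\ };
\path[>=stealth,->,font=\scriptsize]
(m-1-1) edge node[above]{$F$} (m-1-2)
edge (m-2-1)
(m-1-2) edge (m-2-2)
edge (m-1-3)
(m-1-3) edge (m-2-3)
(m-2-1) edge node[above]{$G$} (m-2-2)
(m-2-2) edge (m-2-3);
\end{tikzpicture}
\end{equation*}
whose rightmost horizontal functors are restriction along $L\hookrightarrow L\sqcup\Delta^0$ and $K\hookrightarrow K\sqcup\Delta^0$ respectively, so that the two horizontal composites are the canonical inclusions of the fibers over $x$. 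Since restriction along $\phi$ commutes with the initial-vertex projections to $C$, the outer rectangle is a strict pullback of simplicial sets, and as $C_{/p}\to C_{/p\circ\phi}$ is a right fibration (hence a categorical fibration), this strict pullback is a homotopy pullback. The right-hand square is likewise a strict pullback via the pushout $L\sqcup\Delta^0\cong L\cup_K(K\sqcup\Delta^0)$; Proposition \ref{prp:sliceadj} applied to the monomorphism $K\hookrightarrow K\sqcup\Delta^0$ shows that $C_{/p'\circ\phi'}\to C_{/p\circ\phi}$ is a categorical fibration, so by left properness of the Joyal model structure the right-hand square is also a homotopy pullback. The desired conclusion for the left-hand square follows from the pasting lemma for homotopy pullbacks.

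The main obstacle, such as it is, amounts to the bookkeeping required to verify the two strict-pullback identifications at the level of $n$-simplices; once these are granted, the argument is driven entirely by formal properties of right fibrations, left properness of the Joyal model structure, and the pasting lemma, together with Proposition \ref{prp:sliceadj}.
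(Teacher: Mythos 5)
Your argument is essentially the paper's: the same simplex-level definition of $F$ and $G$, the same three-column rectangle, Prp.~\ref{prp:sliceadj} for the right-hand square, the right-fibration property of $\fromto{C_{/p}}{C_{/p\circ\phi}}$ for the outer rectangle, and the cancellation form of the pasting lemma at the end. The one substantive variation is in how you treat the right-hand square: the paper regards it as the image under the right Quillen functor $C_{/(-)}$ of the (homotopy) pushout square $K,\,L,\,K\sqcup\Delta^0,\,L\sqcup\Delta^0$ in $s\Set_{/C}$, which is where left properness legitimately enters, whereas you check directly that it is a strict pullback via $\Deltaup^n\star(L\sqcup\Deltaup^0)\cong(\Deltaup^n\star L)\cup_{\Deltaup^n\star K}(\Deltaup^n\star(K\sqcup\Deltaup^0))$ and then use that $\fromto{C_{/p'\circ\phi'}}{C_{/p\circ\phi}}$ is a categorical fibration. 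That route is fine, but your stated justification ``so by left properness the square is a homotopy pullback'' is misplaced: left properness concerns pushouts along cofibrations, and the Joyal model structure is \emph{not} right proper, so a strict pullback along a categorical fibration is not automatically a homotopy pullback. What rescues both this step and your outer-rectangle step is that all four corners are slices of the $\infty$-category $C$, hence $\infty$-categories and therefore fibrant in the Joyal model structure; a strict pullback of fibrant objects along a categorical fibration is a homotopy pullback in any model category. With that substitution your proof is complete and, modulo the bookkeeping you acknowledge, coincides in substance with the paper's.
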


If we want the symmetric promonoidal $\infty$-category
\[ \fromto{A^{\eff}(C,C_{\dag},C^{\dag})^{\otimes}}{\N\Lambdaup(\FF)} \]
to be symmetric monoidal, we need a nontrivial condition on our disjunctive triple.

\begin{dfn}\label{dfn:cartesian} A disjunctive triple $(C,C_{\dag},C^{\dag})$ will be said to be \textbf{\emph{cartesian}} just in case it enjoys the following properties
\begin{enumerate}[(\ref{dfn:cartesian}.1)]
\item It is left complete.
\item The underlying $\infty$-category $C$ admits finite products.
\item For any object $X\in C$, the product functor
\begin{equation*}
X\times -\colon\fromto{C}{C}
\end{equation*}
preserves finite coproducts; that is, for any finite set $I$ and any collection $\{U_i\ |\ i\in I\}$ of objects of $C$, the natural map
\begin{equation*}
\fromto{\coprod_{i\in I}(X\times U_i)}{X\times\left(\coprod_{i\in I}U_i\right)}
\end{equation*}
is an equivalence.
\item A morphism $\fromto{X}{\prod_{j\in J}Y_j}$ is egressive just in case each of the components $\fromto{X}{Y_j}$ is so.
\end{enumerate}
\end{dfn}

\begin{exm} Note that a disjunctive $\infty$-category $C$ that admits a teminal object, when equipped with the maximal triple structure (in which every morphism is both ingressive and egressive) is always cartesian. More generally, any disjunctive triple that contains a terminal object $1$ with the property that every morphism $\fromto{X}{1}$ is ingressive and egressive is cartesian.
\end{exm}

\begin{prp}\label{prp:cartesiancocart} If $(C,C_{\dag},C^{\dag})$ is a cartesian disjunctive triple, then the symmetric promonoidal $\infty$-category 
\begin{equation*}
p\colon\fromto{A^{\eff}(C,C_{\dag},C^{\dag})^{\otimes}}{\N\Lambdaup(\FF)}
\end{equation*}
is symmetric monoidal; that is, $p$ is a cocartesian fibration.
\begin{proof}  Since $p$ is flat, by Pr. \ref{prp:flatloccocartiscocart} it suffices to verify that $p$ is a locally cocartesian fibration. Since $p$ is an $\infty$-operad, by the dual of \cite[Lm. 2.4.2.7]{HTT} we reduce to checking that for any active edge $\alpha\colon\fromto{I}{J}$ and any object $(I,X)$ over $I$, there exists a locally $p$-cocartesian edge $\widetilde{\alpha}$ covering $\alpha$. For each $j \in J$, let $\widetilde{X}_j = \prod_{i \in \alpha^{-1}(j)} X_i$, and define $\widetilde{\alpha}$ to be 

\begin{equation*}
\begin{tikzpicture}[baseline]
\matrix(m)[matrix of math nodes, 
row sep={7ex,between origins}, column sep={7ex,between origins}, 
text height=1.5ex, text depth=0.25ex] 
{ &(J,\widetilde{X})& \\ 
(I,X) && (J,\widetilde{X}),\\}; 
\path[>=stealth,->,font=\scriptsize] 
(m-1-2) edge[->>] (m-2-1) 
edge[-,double distance=1.5pt] (m-2-3); 
\end{tikzpicture}
\end{equation*}
where the morphism $\fromto{(J,\widetilde{X})}{(I,X)}$ is defined using the various projection maps $\widetilde{X}_{\alpha(i)} \to X_i$. Then $\widetilde{\alpha}$ is a locally $p$-cocartesian edge if for all $(J,Y) \in A^{\eff}(C,C_{\dag},C^{\dag})^{\otimes}_J$, the induced map

\begin{equation*} \widetilde{\alpha}^\ast \colon \fromto{\Map_{A^{\eff}(C,C_{\dag},C^{\dag})^{\otimes}_J}((J,\widetilde{X}),(J,Y))}{\Map_{A^{\eff}(C,C_{\dag},C^{\dag})^{\otimes}_\alpha}((I,X),(J,Y))}
\end{equation*}
is an equivalence. This map is in turn equivalent to the map
\begin{equation*} \prod_{j \in J} \phi_j \colon \fromto{\prod_{j \in J} \iota C'_{/\left\{ \prod_{i \in \alpha^{-1}(j)} X_i\ ;\ Y_j\right\}}} {\prod_{j \in J} \iota C'_{/\{X_i\ ;\ Y_j\}_{i \in \alpha^{-1}(j) }} }
\end{equation*}
where $\phi_j$ is induced by postcomposition by the projection maps $\prod_{i \in \alpha^{-1}(j)} X_i \to X_i$. Since $(C,C_{\dag},C^{\dag})$ is a cartesian disjunctive triple, we have that the functor
\begin{equation*} \fromto{(C^\dag)_{/ \prod_{i \in \alpha^{-1}(j) } X_i} } { (C^\dag)_{/ (X_i, i \in \alpha^{-1}(j))} }
\end{equation*}
is an equivalence. Hence in light of Prp. \ref{prp:sliceadj} we have a homotopy pullback square
\begin{equation*}
\begin{tikzpicture}[baseline]
\matrix(m)[matrix of math nodes,
row sep=4ex, column sep=4ex,
text height=1.5ex, text depth=0.25ex]
{ \prod_{j \in J} \iota C'_{/\left\{ \prod_{i \in \alpha^{-1}(j)} X_i\ ;\ Y_j\right\}}  & \prod_{j \in J} \iota C'_{/\{X_i\ ;\ Y_j\}_{i \in \alpha^{-1}(j) }} \\
  (C^\dag)_{/ \prod_{i \in \alpha^{-1}(j)} X_i}  & (C^\dag)_{/ \{X_i\}_{i \in \alpha^{-1}(j)} } \\ };
\path[>=stealth,->,font=\scriptsize]
(m-1-1) edge node[above]{$\phi_j$} (m-1-2)
edge (m-2-1)
(m-1-2) edge (m-2-2)
(m-2-1) edge (m-2-2);
\end{tikzpicture}
\end{equation*}
where the horizontal maps are equivalences. We deduce that the map $\widetilde{\alpha}^\ast$ is an equivalence, as desired.
\end{proof}
\end{prp}

In light of Lm. \ref{lm:ACCtoNLFiscocart} and Rk. \ref{rem:explicitAeffotimes}, we obtain the following.
\begin{thm} Suppose $(C,C_{\dag},C^{\dag})$ a left complete disjunctive triple. Then the functor
\begin{equation*}
\fromto{A^{\eff}(C,C_{\dag},C^{\dag})^{\otimes}}{\N\Lambdaup(\FF)}
\end{equation*}
exhibits $A^{\eff}(C,C_{\dag},C^{\dag})^{\otimes}$ as a symmetric promonoidal $\infty$-category, the underlying $\infty$-category of which is the effective Burnside $\infty$-category $A^{\eff}(C,C_{\dag},C^{\dag})$. Furthermore, if $(C,C_{\dag},C^{\dag})$ is cartesian, then $A^{\eff}(C,C_{\dag},C^{\dag})^{\otimes}$ is symmetric monoidal.
\end{thm}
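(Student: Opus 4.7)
The plan is to assemble the results established earlier in this section. For the first assertion, by Definition, being a symmetric promonoidal $\infty$-category amounts to the structure map being a flat inner fibration together with it being an $\infty$-operad. I would verify each of these in turn. Inner fibrancy is inherited under the pullback that defines $A^{\eff}(C,C_{\dag},C^{\dag})^{\otimes}$: the map
\[\fromto{A^{\eff}(C_{\times},(C_{\times})_{\dag},(C_{\times})^{\dag})}{A^{\eff}((\Deltaup^0)_{\times},((\Deltaup^0)_{\times})_{\dag},((\Deltaup^0)_{\times})^{\dag})}\]
is an inner fibration by Lm.\ \ref{lm:ACCtoNLFisinner}, inner fibrations are stable under pullback, and the inclusion $\N\Lambdaup(\FF)\hookrightarrow A^{\eff}((\Deltaup^0)_{\times},\ldots)$ is the equivalence noted in Ex.\ \ref{exm:AeffofDeltazeroisLambdaFop}. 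That the resulting inner fibration is an $\infty$-operad is precisely the (unnumbered) proposition immediately preceding Prp.\ \ref{lm:ACCtoNLFisflat}, whose proof used Lm.\ \ref{lm:ACCtoNLFiscocart} to produce the required cocartesian lifts over inert edges. Flatness is exactly the content of Prp.\ \ref{lm:ACCtoNLFisflat}.

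Next, the identification of the underlying $\infty$-category with the effective Burnside $\infty$-category is already explicit in Rk.\ \ref{rem:explicitAeffotimes}, where one observes that $A^{\eff}(C,C_{\dag},C^{\dag})^{\otimes}_{\{1\}}\simeq A^{\eff}(C,C_{\dag},C^{\dag})$.

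Finally, for the cartesian case, one needs to promote the flat inner fibration to a cocartesian fibration. By Prp.\ \ref{prp:flatloccocartiscocart} it suffices to exhibit locally cocartesian lifts, and this (together with the required computation of locally cocartesian edges over active morphisms as products in $C$) is precisely what is accomplished in Prp.\ \ref{prp:cartesiancocart}. Thus the theorem follows.

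Since the theorem is a synthesis, there is no genuinely new obstacle here: all the technical heavy lifting has already been done, most substantially in the flatness proof (Prp.\ \ref{lm:ACCtoNLFisflat}), which required the construction of the colocalization functor $R$ and the verification that the object $\omega$ built from $\overline{W}$ is terminal in the subcategory $E'$, and in the cartesian case (Prp.\ \ref{prp:cartesiancocart}), where the hypothesis on distributivity of products over coproducts of the disjunctive triple is used to compare the two relevant overcategories via Prp.\ \ref{prp:sliceadj}.
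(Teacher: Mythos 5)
Your proposal is correct and follows essentially the same route as the paper, which proves the theorem by exactly this synthesis: the inner fibration and inert-lift lemmas together with the explicit description of the fibers give the $\infty$-operad structure with underlying $\infty$-category $A^{\eff}(C,C_{\dag},C^{\dag})$, flatness is Prp.~\ref{lm:ACCtoNLFisflat}, and the cartesian case is Prp.~\ref{prp:cartesiancocart} via Prp.~\ref{prp:flatloccocartiscocart}. Your attribution of where the real work lies (the flatness argument and the distributivity hypothesis in the cocartesian lift construction) matches the paper's development.
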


\begin{ntn} When $(C,C_{\dag},C^{\dag})$ is a \emph{right} complete disjunctive triple, we may employ duality and write
\begin{equation*}
A^{\eff}(C,C_{\dag},C^{\dag})_{\otimes}\coloneq(A^{\eff}(C,C^{\dag},C_{\dag})^{\otimes})^{\op}.
\end{equation*}
The functor $\fromto{A^{\eff}(C,C_{\dag},C^{\dag})_{\otimes}}{\N\Lambdaup(\FF)^{\op}}$ is then a symmetric promonoidal structure on the Burnside $\infty$-category $A^{\eff}(C,C^{\dag},C_{\dag})^{\op}\simeq A^{\eff}(C,C_{\dag},C^{\dag})$.
\end{ntn}

\begin{nul}\label{nul:easyfactaboutaeffotimes} Suppose $(C,C_{\dag},C^{\dag})$ a cartesian disjunctive triple. Note that the formula
\begin{equation*}
\coprod_{i\in I}(X\times U_i)\simeq X\times\left(\coprod_{i\in I}U_i\right)
\end{equation*}
implies immediately that the tensor product functor
\begin{equation*}
\otimes\colon\fromto{A^{\eff}(C,C_{\dag},C^{\dag})\times A^{\eff}(C,C_{\dag},C^{\dag})}{A^{\eff}(C,C_{\dag},C^{\dag})}
\end{equation*}
preserves direct sums separately in each variable.

More generally, suppose $(C,C_{\dag},C^{\dag})$ a left complete disjunctive triple, suppose $I$ a finite set, and suppose $\{x_i\}_{i\in I}$ a collection of objects of $C$, which we view, by the standard abuse, as an object of $A^{\eff}(C,C_{\dag},C^{\dag})^{\otimes}_I$. Consider the $1$-simplex $\xi_I\colon\fromto{\Deltaup^1}{N\Lambdaup(\FF)}$, and denote by $h^{\{x_i\}_{i\in I}}$ the restriction of the functor 
\[\fromto{A^{\eff}(C,C_{\dag},C^{\dag})^{\otimes}\times_{N\Lambdaup(\FF)}\Deltaup^1}{\Kan}\]
corepresented by $\{x_i\}_{i\in I}$ to $A^{\eff}(C,C_{\dag},C^{\dag})$. Informally, this is the functor
\[\Map_{C^{\otimes}}^{\xi_I}(\{x_i\}_{i\in I},-).\]
Suppose $j\in I$, and suppose $\{\fromto{y_k}{x_j}\}_{k\in K}$ a family of morphisms that together exhibit $x_j$ as the coproduct $\coprod_{k\in K}y_k$. For each $i\in I$ and $k\in K$, write
\[x'_{i,k}\coloneq\begin{cases}
y_k&\text{if }i=j;\\
x_i&\text{if }i\neq j.
\end{cases}\]
Then the natural map
\[\fromto{h^{\{x_i\}_{i\in I}}}{\prod_{k\in K}h^{\{x'_{i,k}\}_{i\in I}}}\]
is an equivalence.
\end{nul}

\begin{nul} For any disjunctive $\infty$-category $C$ that admits a terminal object, the duality functor
\begin{equation*}
D\colon\equivto{A^{\eff}(C)^{\op}}{A^{\eff}(C)}
\end{equation*}
of \cite[Nt. 3.10]{M1} provides duals for the symmetric monoidal $\infty$-category $A^{\eff}(C)^{\otimes}$ \cite[Df. 2.3.5]{MR2555928}. More precisely, for any object $X$ of $A^{\eff}(C)$, there exists an evaluation morphism $\fromto{X\otimes DX}{1}$ given by the diagram
\begin{equation*}
\begin{tikzpicture}[baseline]
\matrix(m)[matrix of math nodes, 
row sep=3ex, column sep=5ex, 
text height=1.5ex, text depth=0.25ex] 
{&[-2ex]X&\\ 
X\times X&&1,\\}; 
\path[>=stealth,->,font=\scriptsize,inner sep=1.5pt] 
(m-1-2) edge node[above left]{$\Deltaup$} (m-2-1) 
edge node[above right]{$!$} (m-2-3); 
\end{tikzpicture}
\end{equation*}
and, dually, there exists a coevaluation morphism $\fromto{1}{DX\otimes X}$ given by the diagram
\begin{equation*}
\begin{tikzpicture}[baseline]
\matrix(m)[matrix of math nodes, 
row sep=3ex, column sep=5ex, 
text height=1.5ex, text depth=0.25ex] 
{&X&[-2ex]\\ 
1&&X\times X.\\}; 
\path[>=stealth,->,font=\scriptsize,inner sep=1.5pt] 
(m-1-2) edge node[above left]{$!$} (m-2-1) 
edge node[above right]{$\Deltaup$} (m-2-3); 
\end{tikzpicture}
\end{equation*}
Since the square
\begin{equation*}
\begin{tikzpicture} 
\matrix(m)[matrix of math nodes, 
row sep=6ex, column sep=8ex, 
text height=1.5ex, text depth=0.25ex] 
{X&X\times X\\ 
X\times X&X\times X\times X\\}; 
\path[>=stealth,->,font=\scriptsize] 
(m-1-1) edge node[above]{$\Deltaup$} (m-1-2) 
edge node[left]{$\Deltaup$} (m-2-1) 
(m-1-2) edge node[right]{$\Deltaup\times\id$} (m-2-2) 
(m-2-1) edge node[below]{$\id\times\Deltaup$} (m-2-2); 
\end{tikzpicture}
\end{equation*}
is a pullback, it follows that the composite
\begin{equation*}
X\to X\otimes DX\otimes X\to X
\end{equation*}
in $A^{\eff}(C)$ is homotopic to the identity. We conclude that $A^{\eff}(C)^{\otimes}$ is a symmetric monoidal $\infty$-category with duals.
\end{nul}

\begin{nul} If $(C,C_{\dag},C^{\dag})$ is a cartesian disjunctive triple, then in general it is not quite the case that the symmetric monoidal $\infty$-category $A^{\eff}(C,C_{\dag},C^{\dag})^{\otimes}$ admits duals. We have an evaluation morphism $\fromto{X\otimes DX}{1}$ in $A^{\eff}(C,C_{\dag},C^{\dag})$ just in case the diagonal $\Deltaup\colon\fromto{X}{X\times X}$ of $C$ is egressive, and the morphism $!\colon\fromto{X}{1}$ is ingressive. We have a coevaluation morphism $\fromto{1}{DX\otimes X}$ in $A^{\eff}(C,C_{\dag},C^{\dag})$ just in case $\Deltaup$ is ingressive and $!$ is egressive.
\end{nul}

\begin{nul} If $(C,C_{\dag},C^{\dag})$ and $(D,D_{\dag},D^{\dag})$ are left complete disjunctive triples, then it is easy to see that a functor of disjunctive triples
\begin{equation*}
f\colon\fromto{(C,C_{\dag},C^{\dag})}{(D,D_{\dag},D^{\dag})}
\end{equation*}
induces a functor of adequate triples
\[\fromto{(C_{\times},(C_{\times})_{\dag},(C_{\times})^{\dag})}{(D_{\times},(D_{\times})_{\dag},(D_{\times})^{\dag})}\]
and thus a morphism of $\infty$-operads
\begin{equation*}
A^{\eff}(f)^{\otimes}\colon\fromto{A^{\eff}(C,C_{\dag},C^{\dag})^{\otimes}}{A^{\eff}(D,D_{\dag},D^{\dag})^{\otimes}}.
\end{equation*}
If, furthermore, $(C,C_{\dag},C^{\dag})$ and $(D,D_{\dag},D^{\dag})$ are cartesian and $f$ preserves finite products, then $A^{\eff}(f)^{\otimes}$ is of course a symmetric monoidal functor.
\end{nul}


\section{Green functors} Andreas Dress \cite{MR0360771} defined Green functors as Mackey functors equipped with certain pairings. Gaunce Lewis \cite{lewis-notes} noticed that these pairings made them commutative monoids for the Day convolution tensor product on the category of Mackey functors. By an old observation of Brian Day \cite[Ex. 3.2.2]{day-thesis}, these are precisely the lax symmetric monoidal additive functors on the effective Burnside category. Thanks to recent work of Saul Glasman \cite{GlasmanDay}, this characterization of monoids for the Day convolution holds in the $\infty$-categorical context as well.

\begin{dfn} We shall say that a symmetric monoidal $\infty$-category $E^{\otimes}$ is \textbf{\emph{additive}} if the underlying $\infty$-category $E$ is additive, and the tensor product functor $\otimes\colon\fromto{E\times E}{E}$ preserves direct sums separately in each variable.
\end{dfn}

\begin{dfn}\label{dfn:Greenfunctor} 
\begin{enumerate}[(\ref{dfn:Greenfunctor}.1)]
\item Suppose $(C,C_{\dag},C^{\dag})$ a left complete disjunctive triple and $E^{\otimes}$ an additive symmetric monoidal $\infty$-category. Then a \textbf{\emph{commutative Green functor}} is a morphism of $\infty$-operads
\begin{equation*}
\fromto{A^{\eff}(C,C_{\dag},C^{\dag})^{\otimes}}{E^{\otimes}}
\end{equation*}
such that the underlying functor $\fromto{A^{\eff}(C,C_{\dag},C^{\dag})}{E}$ preserves direct sums.
\item More generally, if $O^{\otimes}$ is an $\infty$-operad, then an \textbf{\emph{$O^{\otimes}$-Green functor}} is a morphism of $\infty$-operads
\begin{equation*}
\fromto{A^{\eff}(C,C_{\dag},C^{\dag})^{\otimes}\times_{\N\Lambdaup(\FF)}O^{\otimes}}{E^{\otimes}\times_{\N\Lambdaup(\FF)}O^{\otimes}}
\end{equation*}
over $O^{\otimes}$ such that for any object $X$ of the underlying $\infty$-category $O$, the functor
\begin{equation*}
\fromto{A^{\eff}(C,C_{\dag},C^{\dag})\simeq(A^{\eff}(C,C_{\dag},C^{\dag})^{\otimes}\times_{\N\Lambdaup(\FF)}O^{\otimes})_X}{(E^{\otimes}\times_{\N\Lambdaup(\FF)}O^{\otimes})_X\simeq E}
\end{equation*}
preserves direct sums.
\item Similarly, for any perfect operator category $\Phiup$, we may define a \textbf{\emph{$\Phiup$-Green functor}} as a morphism
\begin{equation*}
\fromto{A^{\eff}(C,C_{\dag},C^{\dag})^{\otimes}\times_{\N\Lambdaup(\FF)}\N\Lambdaup(\Phiup)}{E^{\otimes}\times_{\N\Lambdaup(\FF)}\N\Lambdaup(\Phiup)}
\end{equation*}
of $\infty$-operads over $\Phiup$ such that the underlying functor $\fromto{A^{\eff}(C,C_{\dag},C^{\dag})}{E}$ preserves direct sums.
\end{enumerate}
\end{dfn}

\begin{ntn} Suppose $(C,C_{\dag},C^{\dag})$ a left complete disjunctive triple, and suppose $E^{\otimes}$ an additive symmetric monoidal $\infty$-category. For any $\infty$-operad $O^{\otimes}$, let us write, employing the notation of \cite[Df. 2.1.3.1]{HA}
\begin{equation*}
\Green_{O^{\otimes}}(C,C_{\dag},C^{\dag};E^{\otimes})\subset\Alg_{A^{\eff}(C,C_{\dag},C^{\dag})^{\otimes}\times_{\N\Lambdaup(\FF)}O^{\otimes}\ /O^{\otimes}}(E^{\otimes}\times_{\N\Lambdaup(\FF)}O^{\otimes})
\end{equation*}
for the full subcategory spanned by the $O^{\otimes}$-Green functors.
\end{ntn}

\begin{exm} We define \emph{modules} over an \emph{associative Green functor} in this way. Suppose $(C,C_{\dag},C^{\dag})$ a left complete disjunctive triple, and suppose $E^{\otimes}$ an additive symmetric monoidal $\infty$-category. Then we may consider the $\infty$-operad of \cite[Df. 4.2.7]{HA}, which we will denote $\mathrm{LM}^{\otimes}$. The inclusion $\into{\mathrm{Ass}^{\otimes}}{\mathrm{LM}^{\otimes}}$ induces a functor
\begin{equation*}
\fromto{\Green_{\mathrm{LM}^{\otimes}}(C,C_{\dag},C^{\dag};E^{\otimes})}{\Green_{\mathrm{Ass}^{\otimes}}(C,C_{\dag},C^{\dag};E^{\otimes})}.
\end{equation*}
An object $A$ of the target may be called an \textbf{\emph{associative Green functor}}, and an object of the fiber of this functor over $A$ may be called a \textbf{\emph{left $A$-module}}. We write
\begin{equation*}
\Mod^{\ell}_A(C,C_{\dag},C^{\dag};E^{\otimes})\coloneq\Green_{\mathrm{LM}^{\otimes}}(C,C_{\dag},C^{\dag};E^{\otimes})\times_{\Green_{\mathrm{Ass}^{\otimes}}(C,C_{\dag},C^{\dag};E^{\otimes})}\{A\}
\end{equation*}
for the $\infty$-category of left $A$-modules. When $A$ is a commutative Green functor, we will drop the superscript $\ell$.
\end{exm}

The convolution of two Mackey functors will not in general be a Mackey functor, but it can replaced with one by employing a localization (which we might as well call Mackeyification). To prove that convolution followed by Mackeyification defines a symmetric monoidal structure on the $\infty$-category of Mackey functors, it is necessary to show that Mackeyification is \emph{compatible} with the convolution symmetric monoidal structure  in the sense of Lurie \cite[Df. 2.2.1.6, Ex. 2.2.1.7]{HA}.

The following is immediate from \cite[Pr. 6.5]{M1}.
\begin{lem} Suppose $(C,C_{\dag},C^{\dag})$ a disjunctive triple, and suppose $E$ a presentable additive $\infty$-category. Then the $\infty$-category $\Mack(C,C_{\dag},C^{\dag};E)$ is an accessible localization of the $\infty$-category $\Fun(A^{\eff}(C,C_{\dag},C^{\dag}),E)$.
\end{lem}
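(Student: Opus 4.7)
The plan is to reduce directly to Pr. 6.5 of Part I. Unwinding definitions, $\Mack(C,C_{\dag},C^{\dag};E)$ is the full subcategory of $\Fun(A^{\eff}(C,C_{\dag},C^{\dag}),E)$ spanned by those functors that preserve finite direct sums---equivalently, since $A^{\eff}(C,C_{\dag},C^{\dag})$ is semiadditive and $E$ is additive, those carrying biproducts to biproducts.

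This condition is cut out by a small set of requirements: for each finite family $\{X_i\}_{i\in I}$ of objects of $A^{\eff}(C,C_{\dag},C^{\dag})$, the canonical comparison map $F(\bigoplus_{i\in I} X_i)\to\prod_{i\in I} F(X_i)$ is required to be an equivalence. Since $A^{\eff}(C,C_{\dag},C^{\dag})$ is essentially small and $E$ is presentable, the functor $\infty$-category $\Fun(A^{\eff}(C,C_{\dag},C^{\dag}),E)$ is itself presentable, and the standard strongly-reflective-subcategory criterion (Lurie \cite[Pr. 5.5.4.15]{HTT}) then exhibits the Mackey-functor inclusion as an accessible localization.

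The only point requiring thought is whether the generalization from a disjunctive $\infty$-category (as in M1) to a disjunctive triple disturbs the argument. It does not: the argument depends only on the existence of biproducts in $A^{\eff}(C,C_{\dag},C^{\dag})$, which persists for the triple construction, so the proof of Pr. 6.5 of M1 applies verbatim. There is no genuine obstacle to overcome; the main content is matching up definitions and invoking the cited presentability machinery.
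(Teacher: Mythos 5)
Your proposal is correct and matches the paper, whose entire proof is the observation that the statement is immediate from \cite[Pr.\ 6.5]{M1}; your additional sketch (direct-sum preservation as locality with respect to a small set of maps in the presentable $\infty$-category $\Fun(A^{\eff}(C,C_{\dag},C^{\dag}),E)$, then \cite[Pr.\ 5.5.4.15]{HTT}) is exactly the standard argument underlying that proposition, and your remark that semiadditivity of $A^{\eff}$ persists for disjunctive triples is the right reason the reduction is harmless. The only nicety worth recording is that to invoke \cite[Pr.\ 5.5.4.15]{HTT} one should rewrite the condition ``$F(\bigoplus_i X_i)\to\prod_i F(X_i)$ is an equivalence'' as locality against morphisms of the form $h^{X\oplus Y}\otimes e\to(h^X\otimes e)\oplus(h^Y\otimes e)$ for $e$ ranging over a set of generators of $E$, as this paper itself does when proving compatibility of Mackeyification with the Day convolution.
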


\begin{ntn} Suppose $(C,C_{\dag},C^{\dag})$ a disjunctive $\infty$-category, and suppose $E$ a presentable additive $\infty$-category. Then write $M$ for the left adjoint to the fully faithful inclusion
\[\into{\Mack(C,C_{\dag},C^{\dag};E)}{\Fun(A^{\eff}(C,C_{\dag},C^{\dag}),E)}.\]
\end{ntn}

\begin{lem} Suppose $(C,C_{\dag},C^{\dag})$ a left complete disjunctive $\infty$-category, and suppose $E^{\otimes}$ a presentable symmetric monoidal additive $\infty$-category. Then the left adjoint $M$ constructed above is compatible in the sense of \cite[Df. 2.2.1.6]{HA} with Glasman's Day convolution symmetric monoidal structure on $\Fun(A^{\eff}(C,C_{\dag},C^{\dag}),E)$.
\begin{proof} For any collection of objects $\{s_i\ |\ i\in I\}$ of $C$, let
\[h^{\{s_i\}}\colon\fromto{A^{\eff}(C,C_{\dag},C^{\dag})}{\Kan}\]
be as in \ref{nul:easyfactaboutaeffotimes}, and for any object $x\in E$, let 
\[-\otimes x\colon\fromto{\Fun(A^{\eff}(C,C_{\dag},C^{\dag}),\Kan)}{\Fun(A^{\eff}(C,C_{\dag},C^{\dag}),E)}\]
be the composition with the tensor product $-\otimes x\colon\fromto{\Kan}{E}$ with spaces \cite[\S 4.]{HTT}. Thus objects of the form $h^{\{s_i\}}\otimes x$ generate the $\infty$-category $\Fun(A^{\eff}(C,C_{\dag},C^{\dag}),E)$ under colimits. It is easy to see that for any functors $f,g\colon\fromto{A^{\eff}(C,C_{\dag},C^{\dag})}{\Kan}$ and any object $x\in E$, the map
\[\fromto{(f\times g)\otimes x}{(f\otimes x)\oplus(g\otimes x)}\]
is an $M$-equivalence; furthermore, the class of $M$-equivalences is the strongly saturated class generated by the canonical morphisms 
\begin{equation*}
\fromto{h^{s\oplus t}\otimes x}{(h^s\otimes x)\oplus(h^t\otimes x)}.
\end{equation*}
This tensor product and the Day convolution are compatible in the sense that there are natural equivalences
\begin{equation*}
(h^s\otimes x)\otimes(h^t\otimes y)\simeq h^{{\{s,t\}}}\otimes(x\otimes y),
\end{equation*}
whence one obtains natural $M$-equivalences
\begin{eqnarray*}
((h^s\otimes x)\oplus(h^t\otimes x))\otimes(h^u\otimes y)&\simeq&((h^s\otimes x)\otimes(h^u\otimes y))\oplus((h^t\otimes x)\otimes(h^u\otimes y))\\
&\simeq&(h^{\{s,u\}}\otimes{x\otimes y})\oplus(h^{\{t,u\}}\otimes{x\otimes y})\\
&\to&(h^{\{s,u\}}\times h^{\{t,u\}})\otimes x\otimes y\\
&\simeq&h^{\{s\oplus t,u\}}\otimes{x\otimes y}\\
&\simeq&h^{s\oplus t}\otimes x\otimes h^u\otimes y.
\end{eqnarray*}
It follows that for any $M$-equivalence $\fromto{X}{Y}$ and any object $Z$ of the $\infty$-category $\Fun(A^{\eff}(C,C_{\dag},C^{\dag}),E)$, the morphism
\begin{equation*}
\fromto{X\otimes Z}{Y\otimes Z}
\end{equation*}
is an $M$-equivalence.
\end{proof}
\end{lem}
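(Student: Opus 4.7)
The plan is to use the fact that Lurie's compatibility condition \cite[Df. 2.2.1.6]{HA} amounts to showing that, for every $M$-equivalence $f\colon X\to Y$ and every $Z\in\Fun(A^{\eff}(C,C_{\dag},C^{\dag}),E)$, the map $f\otimes Z$ is again an $M$-equivalence. Since the class of $M$-equivalences is strongly saturated and the Day convolution preserves colimits separately in each variable (because $E$ is presentable and $\otimes_E$ preserves colimits in each variable), standard reduction lets me assume that $f$ ranges over a generating set of $M$-equivalences and that $Z$ ranges over a generating set of objects.

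First, I would identify these generating sets explicitly. The objects $h^{\{s_i\}}\otimes x$, for finite collections $\{s_i\}$ in $C$ and $x\in E$, generate $\Fun(A^{\eff}(C,C_{\dag},C^{\dag}),E)$ under colimits, because representables generate the presheaf $\infty$-category and every $\Kan$-valued presheaf is canonically a colimit of representables. For generators of the $M$-equivalences, I would show that $M$ is precisely the localization that inverts the additivity maps
\[\fromto{h^{s\oplus t}\otimes x}{(h^s\otimes x)\oplus(h^t\otimes x)}.\]
This uses \ref{nul:easyfactaboutaeffotimes}: a functor lies in $\Mack(C,C_{\dag},C^{\dag};E)$ iff it carries finite direct sums in $A^{\eff}(C,C_{\dag},C^{\dag})$ to finite direct sums in $E$, and such functors are exactly those that are local with respect to the displayed maps.

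The heart of the argument is then a direct computation with the Day convolution of representables. By the coend formula of \ref{nul:easyfactaboutaeffotimes}, together with the symmetric promonoidal structure on $A^{\eff}(C,C_{\dag},C^{\dag})^{\otimes}$, I expect a natural equivalence
\[(h^s\otimes x)\otimes(h^t\otimes y)\simeq h^{\{s,t\}}\otimes(x\otimes y);\]
this is essentially the Yoneda-style identification that makes Day convolution work. Given this, tensoring the generating $M$-equivalence $h^{s\oplus t}\otimes x\to (h^s\otimes x)\oplus(h^t\otimes x)$ with a generator $h^u\otimes y$ reduces to the map
\[\fromto{h^{\{s\oplus t,u\}}\otimes(x\otimes y)}{(h^{\{s,u\}}\oplus h^{\{t,u\}})\otimes(x\otimes y)},\]
which is again an $M$-equivalence by \ref{nul:easyfactaboutaeffotimes}, applied in the slot $s\oplus t$ of the pair $\{s\oplus t, u\}$.

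The main obstacle I anticipate is the coherence of the representability identification $(h^s\otimes x)\otimes(h^t\otimes y)\simeq h^{\{s,t\}}\otimes(x\otimes y)$: it needs to be natural enough in both slots that the square whose vertices are the four objects obtained by tensoring $h^{s\oplus t}\otimes x\to(h^s\otimes x)\oplus(h^t\otimes x)$ with $h^u\otimes y$ actually commutes with the analogous square assembled out of the additivity map in the first variable. Once this naturality is in hand, the strong saturation of the $M$-equivalences plus distributivity of $\otimes$ over $\oplus$ (valid on the nose, since both sides preserve colimits) closes the argument and produces the desired compatibility.
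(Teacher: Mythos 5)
Your proposal is correct and follows essentially the same route as the paper: reduce via strong saturation and colimit-preservation of Day convolution to the generators $h^{\{s_i\}}\otimes x$ and the additivity maps $h^{s\oplus t}\otimes x\to(h^s\otimes x)\oplus(h^t\otimes x)$, use the identification $(h^s\otimes x)\otimes(h^t\otimes y)\simeq h^{\{s,t\}}\otimes(x\otimes y)$, and finish with \ref{nul:easyfactaboutaeffotimes} (the paper phrases the last step via the $M$-equivalence $(f\times g)\otimes x\to(f\otimes x)\oplus(g\otimes x)$ together with $h^{\{s\oplus t,u\}}\simeq h^{\{s,u\}}\times h^{\{t,u\}}$, which is what your "applied in the slot $s\oplus t$" amounts to).
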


\begin{nul} In particular, if $(C,C_{\dag},C^{\dag})$ is a left complete disjunctive triple, and if $E^{\otimes}$ a presentable symmetric monoidal additive $\infty$-cate\-gory, we obtain a symmetric monoidal $\infty$-cate\-gory $\Mack(C,C_{\dag},C^{\dag};E)^{\otimes}$, and, in light of \cite{GlasmanDay}, for any $\infty$-operad $O^{\otimes}$, one obtains an equivalence
\begin{equation*}
\Alg_{O^{\otimes}}(\Mack(C,C_{\dag},C^{\dag};E)^{\otimes})\simeq\Green_{O^{\otimes}}(C,C_{\dag},C^{\dag};E).
\end{equation*}
\end{nul}


\section{Green stabilization} Now let us address the issue of multiplicative structures on the Mackey stabilization, as constructed in \cite[\S 7]{M1}. In particular, we aim to show that if $E$ is an $\infty$-topos, then the Mackey stabilization of a morphism of operads
\[\fromto{A^{\eff}(C,C_{\dag},C^{\dag})^{\otimes}}{E^{\times}}\]
naturally admits the structure of a Green functor
\begin{equation*}
\fromto{A^{\eff}(C,C_{\dag},C^{\dag})^{\otimes}}{\Sp(E)^{\wedge}}.
\end{equation*}

\begin{dfn} Suppose $(C,C_{\dag},C^{\dag})$ a cartesian disjunctive triple, suppose $E$ an $\infty$-topos, and suppose
\begin{equation*}
f\colon\fromto{A^{\eff}(C,C_{\dag},C^{\dag})^{\otimes}}{E^{\times}}\textrm{\quad and\quad}F\colon\fromto{A^{\eff}(C,C_{\dag},C^{\dag})^{\otimes}}{\Sp(E)^{\otimes}}
\end{equation*}
morphisms of $\infty$-operads. Then a morphism of $A^{\eff}(C,C_{\dag},C^{\dag})^{\otimes}$-algebras
\begin{equation*}
\eta\colon\fromto{f}{\Omega^{\infty}\circ F}
\end{equation*}
will be said to exhibit $F$ as the \textbf{\emph{Green stabilization}} of $f$ if $F$ is a Green functor, and if, for any Green functor $R\colon\fromto{A^{\eff}(C,C_{\dag},C^{\dag})^{\otimes}}{\Sp(E)^{\otimes}}$, the map
\begin{equation*}
\fromto{\Map_{\Green_{E_{\infty}}(C,C_{\dag},C^{\dag};\Sp(E)^{\otimes})}(F,R)}{\Map_{\Alg_{A^{\eff}(C,C_{\dag},C^{\dag})^{\otimes}}(E^{\times})}(f,\Omega^{\infty}\circ R)}
\end{equation*}
induced by $\eta$ is an equivalence.
\end{dfn}

The following result is essentially the same as \cite[Pr. 2.1]{K3}.

\begin{prp}\label{prp:DACotimes} Suppose $(C,C_{\dag},C^{\dag})$ a cartesian disjunctive triple. There exists a symmetric monoidal $\infty$-category $\mathrm{D}A(C,C_{\dag},C^{\dag})^{\otimes}$ and a fully faithful symmetric monoidal functor
\begin{equation*}
j^{\otimes}\colon\into{A^{\eff}(C,C_{\dag},C^{\dag})^{\otimes}}{\mathrm{D}A(C,C_{\dag},C^{\dag})^{\otimes}}
\end{equation*}
with the following properties.
\begin{enumerate}[(\ref{prp:DACotimes}.1)]
\item The $\infty$-category $\mathrm{D}A(C,C_{\dag},C^{\dag})$ underlies $\mathrm{D}A(C,C_{\dag},C^{\dag})^{\otimes}$, and the underlying functor of $j^{\otimes}$ is the inclusion
\begin{equation*}
j\colon\into{A^{\eff}(C,C_{\dag},C^{\dag})}{\mathrm{D}A(C,C_{\dag},C^{\dag})}
\end{equation*}
of \cite[Nt. 7.2]{M1}.
\item For any symmetric monoidal $\infty$-category $E^{\otimes}$ whose underlying $\infty$-category admits all sifted colimits such that the tensor product preserves sifted colimits separately in each variable, the induced functor
\begin{equation*}
\fromto{\Alg_{\mathrm{D}A(C,C_{\dag},C^{\dag})^{\otimes}}(E^{\otimes})}{\Alg_{A^{\eff}(C,C_{\dag},C^{\dag})^{\otimes}}(E^{\otimes})}
\end{equation*}
exhibits an equivalence from the full subcategory spanned by those morphisms of $\infty$-operads $A$ whose underlying functor $A\colon\fromto{\mathrm{D}A(C,C_{\dag},C^{\dag})}{E}$ preserves sifted colimits to the full subcategory spanned by those morphisms of $\infty$-operads $B$ whose underlying functor $B\colon\fromto{A^{\eff}(C,C_{\dag},C^{\dag})}{E}$ preserves filtered colimits.
\item The tensor product functor
\begin{equation*}
\otimes\colon\fromto{\mathrm{D}A(C,C_{\dag},C^{\dag})\times\mathrm{D}A(C,C_{\dag},C^{\dag})}{\mathrm{D}A(C,C_{\dag},C^{\dag})}
\end{equation*}
preserves all colimits separately in each variable.
\end{enumerate}
\begin{proof} The only part that is not a consequence of \cite[Pr. 4.8.1.10 and Var. 4.8.1.11]{HA} is the assertion that the tensor product functor
\begin{equation*}
\otimes\colon\fromto{\mathrm{D}A(C,C_{\dag},C^{\dag})\times\mathrm{D}A(C,C_{\dag},C^{\dag})}{\mathrm{D}A(C,C_{\dag},C^{\dag})}
\end{equation*}
preserves direct sums separately in each variable. This assertion holds for objects of the effective Burnside category $A^{\eff}(C,C_{\dag},C^{\dag})$ thanks to the universality of coproducts in $C$; the general case follows by exhibiting any object of $\mathrm{D}A(C,C_{\dag},C^{\dag})$ as a colimit of a sifted diagram of objects of $A^{\eff}(C,C_{\dag},C^{\dag})$ and using the fact that both the tensor product and the direct sum commute with sifted colimits.
\end{proof}
\end{prp}

In light of \cite[Pr. 3.5]{K3} and \cite[Pr. 6.2.4.14 and Th. 6.2.6.2]{HA}, we now have the following.

\begin{prp} Suppose $(C,C_{\dag},C^{\dag})$ a disjunctive triple, suppose $E$ an $\infty$-topos, and suppose
\begin{equation*}
f\colon\fromto{A^{\eff}(C,C_{\dag},C^{\dag})^{\otimes}}{E^{\times}}
\end{equation*}
a morphism of $\infty$-operads. Then a Green stabilization of $f$ exists. In particular, the functor
\begin{equation*}
\Omegaup^{\infty}\circ-\colon\fromto{\Green(C,C_{\dag},C^{\dag};\Sp(E)^{\otimes})}{\Alg_{A^{\eff}(C,C_{\dag},C^{\dag})^{\otimes}}(E^{\times})}
\end{equation*}
admits a left adjoint that covers the left adjoint of the functor
\begin{equation*}
\Omegaup^{\infty}\circ-\colon\fromto{\Mack(C,C_{\dag},C^{\dag};\Sp(E))}{\Fun(A^{\eff}(C,C_{\dag},C^{\dag}),E)}.
\end{equation*}
\end{prp}

\begin{exm} Suppose $(C,C_{\dag},C^{\dag})$ a cartesian disjunctive triple. Then the functor
\[\fromto{A^{\eff}(C,C_{\dag},C^{\dag})}{\Kan}\]
corepresented by the terminal object $1$ of $C$ is the unit for the Day convolution symmetric monoidal structure of Glasman, and hence it is an $E_{\infty}$ algebra in an essentially unique fashion. Thus we can consider its Green stabilization
\begin{equation*}
\SS^{\otimes}=\SS_{(C,C_{\dag},C^{\dag})}^{\otimes}\colon\fromto{A^{\eff}(C,C_{\dag},C^{\dag})^{\otimes}}{\Sp^{\wedge}},
\end{equation*}
whose underlying Mackey functor is the Burnside Mackey functor $\SS_{(C,C_{\dag},C^{\dag})}$ of \cite{M1}. We call $\SS^{\otimes}$ the \textbf{\emph{Burnside Green functor}}.
\end{exm}

In a similar vein, we immediately have the following:
\begin{prp} For any cartesian disjunctive triple $(C,C_{\dag},C^{\dag})$, the functor
\[\fromto{A^{\eff}(C,C_{\dag},C^{\dag})^\op}{\Mack(C,C_{\dag},C^{\dag};\Sp)}\]
given by the assignment $\goesto{X}{\SS^X}$ is naturally symmetric monoidal. That is, for any two objects $X,Y\in C$, one has a canonical equivalence
\[\SS^X\otimes\SS^Y\simeq\SS^{X \times Y}\]
\end{prp}

\begin{cor} Suppose $(C,C_{\dag},C^{\dag})$ a cartesian disjunctive triple. For any spectral Mackey functor $M$ thereon, write $F(M,-)$ for the right adjoint to the functor
\[-\otimes M\colon\fromto{\Mack(C,C_{\dag},C^{\dag};\Sp)}{\Mack(C,C_{\dag},C^{\dag};\Sp)}.\]
Then for any object $X\in C$, the Mackey functor $F(\SS^X,M)$ is given by the assignment
\[\goesto{Y}{M(X \times Y)}.\]
\end{cor}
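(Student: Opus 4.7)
The plan is to deduce the corollary from the preceding proposition by combining three ingredients: (i) the corepresentability property of $\SS^Y$, namely that $\SS^Y$ corepresents the evaluation-at-$Y$ functor on $\Mack(C,C_{\dag},C^{\dag};\Sp)$; (ii) the tensor–hom adjunction that defines $F(\SS^X,-)$; and (iii) the just-proved multiplicativity $\SS^X\otimes\SS^Y\simeq\SS^{X\times Y}$.

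First I would recall, from the construction of the Burnside Green functor in the preceding example together with \cite[\S7]{M1}, that the Mackey functor $\SS^Y$ is the free spectral Mackey functor on $Y$; concretely, for every $N\in\Mack(C,C_{\dag},C^{\dag};\Sp)$ there is a natural equivalence of spectra
\begin{equation*}
\map_{\Mack(C,C_{\dag},C^{\dag};\Sp)}(\SS^Y,N)\simeq N(Y),
\end{equation*}
induced via the Mackey stabilization from the fact that the corepresentable $h^Y\colon A^{\eff}(C,C_{\dag},C^{\dag})\to\Kan$ represents evaluation at $Y$ on the functor $\infty$-category. Second I would invoke the tensor–hom adjunction for $-\otimes\SS^X\dashv F(\SS^X,-)$, which gives a natural equivalence
\begin{equation*}
\map_{\Mack}(\SS^Y,F(\SS^X,M))\simeq\map_{\Mack}(\SS^Y\otimes\SS^X,M).
\end{equation*}

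Third I would apply the preceding proposition, which provides the symmetric monoidal equivalence $\SS^Y\otimes\SS^X\simeq\SS^{X\times Y}$, and then reapply the corepresentability of $\SS^{X\times Y}$ in the other direction. Chaining these three equivalences yields
\begin{equation*}
F(\SS^X,M)(Y)\simeq\map_{\Mack}(\SS^Y,F(\SS^X,M))\simeq\map_{\Mack}(\SS^{X\times Y},M)\simeq M(X\times Y),
\end{equation*}
naturally in $Y\in A^{\eff}(C,C_{\dag},C^{\dag})$, which is exactly the claimed identification.

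I do not expect any real obstacle: the only nontrivial content has already been packaged in the previous proposition, and the corepresentability of $\SS^Y$ is a formal consequence of the fact that Mackey stabilization is left adjoint to $\Omega^\infty\circ-$ applied to the functor $h^Y$, which corepresents evaluation at $Y$ on $\Fun(A^{\eff}(C,C_{\dag},C^{\dag}),\Kan)$ by the ordinary Yoneda lemma. The mildest care needed is to check that the equivalences above are equivalences of spectra rather than merely of underlying spaces, but this follows because $F(\SS^X,-)$ and $-\otimes\SS^X$ are both $\Sp$-enriched endofunctors of $\Mack(C,C_{\dag},C^{\dag};\Sp)$, and the adjunction is automatically enriched over $\Sp$.
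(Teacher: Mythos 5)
Your argument is correct and is precisely the intended deduction: the paper states the corollary without a separate proof as an immediate consequence of the preceding proposition, and your chain $F(\SS^X,M)(Y)\simeq\map(\SS^Y,F(\SS^X,M))\simeq\map(\SS^Y\otimes\SS^X,M)\simeq\map(\SS^{X\times Y},M)\simeq M(X\times Y)$, using spectrum-level corepresentability of $\SS^Y$ and the tensor--hom adjunction, is exactly that argument.
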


The following is now immediate.
\begin{prp} Suppose $(C,C_{\dag},C^{\dag})$ a cartesian disjunctive triple. The Burnside Mackey functor $\SS_{(C,C_{\dag},C^{\dag})}$ is the unit in the symmetric monoidal $\infty$-category $\Mack(C,C_{\dag},C^{\dag};\Sp)^{\otimes}$. Consequently, the Burnside Green functor $\SS_{(C,C_{\dag},C^{\dag})}^{\otimes}$ is the initial object in the $\infty$-category $\Green_{\N\Lambdaup(\FF)}(C,C_{\dag},C^{\dag};\Sp^{\otimes})$, and the forgetful functor
\begin{equation*}
\equivto{\Mod_{\SS^{\otimes}}(C,C_{\dag},C^{\dag};\Sp^{\otimes})}{\Mack(C,C_{\dag},C^{\dag};\Sp)}
\end{equation*}
is an equivalence.
\end{prp}


\section{Duality} In this section, suppose $C$ a disjunctive $\infty$-category that admits a terminal object. Since the functor $\goesto{X}{\SS^X}$ is symmetric monoidal, it follows immediately that every representable Mackey functor $\SS^X$ is strongly dualizable, and
\[(\SS^X)^{\vee}\simeq\SS^{DX}\]

\begin{ntn} For any associative spectral Green functor $R$ and for any object $X\in C$, denote by $R^X$ the left $R$-module $R\otimes\SS^X$, and denote by ${}^X\!R$ the right $R$-module $\SS^X\otimes R$.

Of course for any left (respectively, right) $R$-module $M$, one has
\[\Map(R^X,M)\simeq\Omegaup^\infty M(X)\text{\qquad(resp.,\quad}\Map({}^X\!R,M)\simeq\Omegaup^\infty M(X)\text{\quad)}.\]
\end{ntn}

\begin{dfn} For any associative spectral Green functor $R$ on $C$, denote by $\Perf^{\ell}_R$ the smallest stable subcategory of the $\infty$-category $\Mod^{\ell}_R$ that contains the left $R$-modules $R^X$ (for $X\in C$) and is closed under retracts. Similarly, denote by $\Perf^{r}_R$ the smallest stable subcategory of the $\infty$-category $\Mod^{r}_R$ that contains the right $R$-modules ${}^X\!R$ (for $X\in C$) and is closed under retracts.

The objects of $\Perf^{\ell}_R$ (respectively, $\Perf^{r}_R$) will be called \textbf{\emph{perfect}} left (resp., right) modules over $R$.
\end{dfn}

Now we obtain the following, which is a straightforward analogue of \cite[Pr. 7.2.5.2]{HA}.
\begin{prp} For any associative spectral Green functor $R$, a left $R$-module is compact just in case it is perfect.
\begin{proof} For any $X\in C$, the functor corepresented by $R^X$ is the assignment $\goesto{M}{\Omegaup^\infty M(X)}$, which preserves filtered colimits. Hence $R^X$ is compact, and thus any perfect left $R$-module is compact.

Conversely, there is a fully faithful, colimit-preserving functor
\[F\colon\into{\Ind(\Perf^{\ell}_R)}{\Mod_R}\]
induced by the inclusion $\into{\Perf^{\ell}_R}{\Mod^{\ell}_R}$. If this is not essentially surjective, there exists a nonzero left $R$-module $M$ such that for every $R$-module $N$ in the essential image of $F$, the group $[N,M]$ vanishes. In particular, for any integer $n$ and any object $X\in C$,
\[\pi_nM(X)\cong[R^X[n],M]\cong 0,\]
whence $M\simeq 0$.
\end{proof}
\end{prp}

The proof of the following is word-for-word identical to that of \cite[Pr. 7.2.5.4]{HA}.
\begin{prp} For any associative spectral Green functor $R$ on $C$, a left $R$-module $M$ is perfect just in case there exists a right $R$-module $M^{\vee}$ that is dual to $M$ in the sense that the functor
\[\Map(\SS,M^{\vee}\otimes_R-)\colon\fromto{\Mod^{\ell}_R}{\Kan}\]
is the functor that $M$ corepresents.
\end{prp}

\begin{exm} Note that, in particular, for any object $X\in C$, one has
\[(R^X)^{\vee}\simeq{}^{DX}\!R.\]
\end{exm}


\section{The Künneth spectral sequence} Let us note that the Künneth spectral sequence works in the Mackey functor context more or less exactly as in the ordinary $\infty$-category of spectra. To this end, let us first discuss $t$-structures on $\infty$-categories of spectral Mackey functors.

\begin{prp} Suppose $(C,C_\dag,C^\dag)$ a disjunctive triple, and suppose $A$ a stable $\infty$-category equipped with a $t$-structure $(A_{\geq0},A_{\leq 0})$. Then the two subcategories
\[\Mack(C,C_\dag,C^\dag;A)_{\geq 0}\coloneq\Mack(C,C_\dag,C^\dag;A_{\geq 0})\]
and
\[\Mack(C,C_\dag,C^\dag;A)_{\leq 0}\coloneq\Mack(C,C_\dag,C^\dag;A_{\leq 0})\]
define a $t$-structure on $\Mack(C,C_\dag,C^\dag;A)$.
\begin{proof} Consider the functor $L\colon\fromto{\Mack(C,C_\dag,C^\dag;A)}{\Mack(C,C_\dag,C^\dag;A)}$ given by composition with $\tau_{\leq -1}$; it is clear that $L$ is a localization functor. Furthermore, the essential image of $L$ is the $\infty$-category $\Mack(C,C_\dag,C^\dag;A_{\leq -1})$, which is closed under extensions, since $A_{\leq -1}$ is. Now we apply \cite[Pr. 1.2.1.16]{HA}.
\end{proof}
\end{prp}

\begin{nul} Note that if $A$ a stable $\infty$-category equipped with a $t$-structure $(A_{\geq0},A_{\leq 0})$, then for any disjunctive triple $(C,C_\dag,C^\dag)$, the heart of the induced $t$-structure on $\Mack(C,C_\dag,C^\dag;A)$ is given by
\[\Mack(C,C_\dag,C^\dag;A)^{\heartsuit}\simeq\Mack(C,C_\dag,C^\dag;A^{\heartsuit}).\]

Furthermore, it is clear that many properties of the $t$-structure on $A$ are inherited by the induced $t$-structure $\Mack(C,C_\dag,C^\dag;A)$: in particular, one verifies easily that the $t$-structure on $\Mack(C,C_\dag,C^\dag;A)$ is left bounded, right bounded, left complete, right complete, compatible with sequential colimits, compatible with filtered colimits, or accessible if the $t$-structure on $A$ is so.
\end{nul}

\begin{exm} For any disjunctive triple $(C,C_\dag,C^\dag)$, the $\infty$-category of spectral Mackey functors $\Mack(C,C_\dag,C^\dag;\Sp)$ admits an accessible $t$-structure that is both left and right complete whose heart is the abelian category $\Mack(C,C_\dag,C^\dag;N\Ab)$. Observe that the corepresentable functors $\tau_{\leq 0} \SS^X$ are projective objects in the heart, and thus the heart has enough projectives.

In particular, if $G$ is a profinite group and if $C$ is the disjunctive $\infty$-category of finite $G$-sets, then the $\infty$-category $\Mack_G$ of spectral Mackey functors for $G$ admits an accessible $t$-structure that is both left and right complete, in which the heart $\Mack_G^{\heartsuit}$ is the nerve of the usual abelian category of Mackey functors for $G$.
\end{exm}

\begin{cnstr}\label{sseqcnstr} Suppose $A$ a stable $\infty$-category equipped with a $t$-structure. Suppose $(C,C_\dag,C^\dag)$ a disjunctive triple, and suppose $X\colon\fromto{N\ZZ}{\Mack(C,C_\dag,C^\dag;A)}$ a filtered Mackey functor with colimit $X(+\infty)$. Then we have the spectral sequence
\[E^{p,q}_r\coloneq\im\left[\fromto{\pi_{p+q}\left(\frac{X(p)}{X(p-r)}\right)}{\pi_{p+q}\left(\frac{X(p+r-1)}{X(p-1)}\right)}\right]\]
associated with $X$ \cite[Df. 1.2.2.9]{HA}.

Note that this is a spectral sequence of $A^{\heartsuit}$-valued Mackey functors. Since limits and colimits of Mackey functors are defined objectwise, it follows that for any object $U\in A^{\eff}(C,C_{\dag},C^{\dag})$, the value $E^{p,q}_r(U)$ is the spectral sequence (in $A^{\heartsuit}$) associated with the filtered object $X(U)\colon\fromto{N\ZZ}{A}$.
\end{cnstr}

\begin{nul} In the setting of Cnstr. \ref{sseqcnstr}, assume that $A$ admits all sequential colimits and that the $t$-structure is compatible with these colimits. If $X(n)\simeq 0$ for $n\ll 0$, then the associated spectral sequence converges to a filtration on $\pi_{p+q}(X(+\infty))$ \cite[1.2.2.14]{HA}. That is:
\begin{itemize}
\item For any $p$ and $q$, there exists $r\gg 0$ such that the differential
\[d_r\colon\fromto{E^{p,q}_r}{E^{p-r,q+r-1}_r}\]
vanishes.
\item For any $p$ and $q$, there exist a discrete, exhaustive filtration
\[\cdots\subset F_{p+q}^{-1}\subset F_{p+q}^0\subset F_{p+q}^1\subset\cdots\subset\pi_{p+q}X(+\infty)\]
and an isomorphism $E^{p,q}_\infty\cong F_{p+q}^p/F_{p+q}^{p-1}$.
\end{itemize}

In more general circumstances, one can obtain a kind of ``local convergence.'' Suppose again that $A$ admits all sequential colimits, and that the $t$-structure is compatible with these colimits. Now suppose that for every object $U\in A^{\eff}(C,C_{\dag},C^{\dag})$, there exists $n\ll 0$ such that $X(n)(U)\simeq 0$. Then for every object $U\in A^{\eff}(C,C_{\dag},C^{\dag})$, the spectral sequence $E^{p,q}_r(U)$ converges to $\pi_{p+q}(X(+\infty)(U))$. In finitary cases (e.g., when $C$ is the disjunctive $\infty$-category of finite $G$-sets for a finite group $G$), there is no difference between the local convergence and the global convergence.

Better convergence results can be obtained when the filtered Mackey functor is the skeletal filtration of a simplicial connective object $Y_\ast$ \cite[Pr. 1.2.4.5]{HA}. In this case, we do not need to assume that the $t$-structure on $A$ is compatible with sequential colimits, the associated spectral sequence is a first-quadrant spectral sequence, and it converges to a length $p+q$ filtration on $\pi_{p+q}|Y_\ast|$.
\end{nul}

Now, to construct the Künneth spectral sequence for Mackey functors, we can follow very closely the arguments of Lurie \cite[\S 7.2.1]{HA}.

\begin{lem} \label{lem:projgen} Suppose $(C,C_{\dag},C^{\dag})$ a disjunctive triple. Then the collection of corepresentable Mackey functors $\{ \SS^X\ |\ X \in A^{\eff}(C,C_{\dag},C^{\dag}) \}$ is a set of compact projective generators for $\Mack(C,C_{\dag},C^{\dag};\Sp_{\geq 0})$ in the sense of \cite[Dfn. 5.5.2.3]{HTT}.
\end{lem}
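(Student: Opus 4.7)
The plan is to reduce everything to the Yoneda-type identity
\[ \Map_{\Mack(C,C_{\dag},C^{\dag};\Sp_{\geq 0})}(\SS^X, M) \simeq \Omegaup^{\infty} M(X), \]
which was already recorded above for modules over any associative Green functor (take $R = \SS$, so that $R^X = \SS^X$). Granted this identification, the three required properties — compactness, projectivity, and joint generation — become essentially formal.

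For compactness and projectivity, I would factor $\Map(\SS^X, -)$ as the composition of the evaluation functor at $X$ with $\Omegaup^{\infty}\colon\fromto{\Sp_{\geq 0}}{\Kan}$. The functor $\Omegaup^{\infty}$ on connective spectra preserves sifted colimits, so it suffices to show that evaluation at $X$ preserves sifted colimits on $\Mack(C,C_{\dag},C^{\dag};\Sp_{\geq 0})$. Evaluation on the presheaf category $\Fun(A^{\eff}(C,C_{\dag},C^{\dag}),\Sp_{\geq 0})$ plainly preserves all colimits, so the content is to verify that the inclusion
\[ \into{\Mack(C,C_{\dag},C^{\dag};\Sp_{\geq 0})}{\Fun(A^{\eff}(C,C_{\dag},C^{\dag}),\Sp_{\geq 0})} \]
is closed under sifted colimits. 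A Mackey functor is precisely one that preserves finite direct sums, and since $\Sp_{\geq 0}$ is semi-additive, finite direct sums there are simultaneously products and coproducts, and in particular commute with sifted colimits. Hence a sifted colimit of direct-sum-preserving functors remains direct-sum-preserving, and the inclusion is stable under sifted colimits. This gives both compactness (filtered colimits are sifted) and projectivity (geometric realizations are sifted).

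For joint generation, suppose $M \in \Mack(C,C_{\dag},C^{\dag};\Sp_{\geq 0})$ is such that $\Map(\SS^X, M) \simeq \ast$ for every $X \in A^{\eff}(C,C_{\dag},C^{\dag})$. By the identification above, $\Omegaup^{\infty} M(X) \simeq \ast$, so $\pi_n M(X) = 0$ for all $n \geq 0$; combined with the connectivity of $M(X)$, this forces $M(X) \simeq 0$ for every $X$, and hence $M \simeq 0$. The only mildly technical point is the sifted-colimit closedness of the Mackey subcategory inside the presheaf category, and this is the step I expect to require the most care — but it reduces cleanly to the fact that direct sums commute with sifted colimits in any presentable semi-additive $\infty$-category, applied to $\Sp_{\geq 0}$.
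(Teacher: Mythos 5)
Your verification of compactness and projectivity is essentially sound: the identification $\Map(\SS^X,M)\simeq\Omega^\infty M(X)$, the fact that Mackey functors are closed under (sifted) colimits computed objectwise because finite direct sums in the semiadditive target are finite coproducts, and the fact that $\Omega^\infty\colon\fromto{\Sp_{\geq 0}}{\Kan}$ preserves sifted colimits together show that $\Map(\SS^X,-)$ preserves sifted colimits. (One small caveat: the formula you quote from the Duality section, $\Map(R^X,M)\simeq\Omega^\infty M(X)$ with $R=\SS$, lives in the cartesian setting where $R^X=R\otimes\SS^X$ makes sense; for the lemma, which assumes only a disjunctive triple, you should instead invoke the universal property of $\SS^X$ as the Mackey stabilization of the corepresentable functor, i.e. the adjunction between $\Omega^\infty\circ-$ and its left adjoint, which gives the same identity with no cartesian hypothesis.)

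The genuine gap is in the generation step. What the lemma asserts (and what the paper's cited definition requires, and what is used later to produce hypercoverings) is that the $\SS^X$ \emph{generate $\Mack(C,C_{\dag},C^{\dag};\Sp_{\geq 0})$ under colimits}. What you verify is only that they jointly detect zero objects: $\Map(\SS^X,M)\simeq\ast$ for all $X$ implies $M\simeq 0$. In this connective, non-stable setting that is strictly weaker, and the passage from it to generation is not formal: for instance, in $\Sp_{\geq 0}$ the map $\fromto{0}{H\ZZ}$ has vanishing fiber, so one cannot argue ``the fiber of the counit vanishes, hence the counit is an equivalence'' the way one would in a stable category. The standard bridge is: a set of compact projective objects whose corepresented functors are \emph{jointly conservative} generates under colimits, and this is exactly what the paper's appeal to \cite[4.7.4.18]{HA} packages, applied to the conservative, sifted-colimit-preserving functor $\Omega^\infty\circ-\colon\fromto{\Mack(C,C_{\dag},C^{\dag};\Sp_{\geq 0})}{\Fun^\times(A^{\eff}(C,C_{\dag},C^{\dag}),\Kan)\simeq P_\Sigma(A^{\eff}(C,C_{\dag},C^{\dag})^\op)}$, whose left adjoint carries the corepresentable generators to the $\SS^X$. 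So to repair your argument: first upgrade zero-detection to joint conservativity (if $\Map(\SS^X,f)$ is an equivalence for all $X$, then $\Omega^\infty f(X)$ is an equivalence for all $X$, and $\Omega^\infty$ is conservative on connective spectra, so $f$ is an objectwise equivalence), and then invoke \cite[4.7.4.18]{HA} (or run the counit argument for the inclusion of the colimit closure of the $\SS^X$) to convert conservativity plus compact projectivity into generation under colimits. As written, your final paragraph asserts generation without supplying this bridge, and that is precisely the nontrivial content the paper's proof delegates to its citation.
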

\begin{proof} The corepresentable functors provide a set of compact projective generators for the $\infty$-category $\Fun^\times(A^{\eff}(C,C_{\dag},C^{\dag}),\Kan)$ because this category is precisely $P_\Sigma(A^{\eff}(C,C_{\dag},C^{\dag})^\op)$. The functor
\[ \fromto{\Omega^\infty \circ - \colon \Mack(C,C_{\dag},C^{\dag};\Sp_{\geq 0})}{\Fun^\times(A^{\eff}(C,C_{\dag},C^{\dag}),\Kan)} \]
preserves sifted colimits and is conservative, since $\Omega^\infty \colon \Sp_{\geq 0} \to \Kan$ preserves sifted colimits by \cite[1.4.3.9]{HA} and is conservative, and the inclusion of both sides into all functors preserves sifted colimits (we use that $\Kan$ is cartesian closed). We conclude by applying \cite[4.7.4.18]{HA}.
\end{proof}

To set up the spectral sequence we need to impose the hypotheses of strong dualizability on the $\SS^X$. Because of this, we now work in the generality of $C$ a disjunctive $\infty$-category which admits a terminal object.

Suppose
\[R\colon\fromto{A^{\eff}(C)^{\otimes}\times_{N\Lambdaup(\FF)}\mathrm{Ass}^{\otimes}}{\Sp^{\wedge}\times_{N\Lambdaup(\FF)}\mathrm{Ass}^{\otimes}}\]
an associative Green functor, suppose $M$ a right $R$-module, and suppose $N$ a left $R$-module. There is a comparison map
\[ \fromto{\Tor^{\pi_\ast R}_0(\pi_{\ast} M ,\pi_{\ast} N) }{\pi_{\ast} (M \otimes_R N)} \]
constructed as follows: given $x \in \pi_m M (U)$ and $y \in \pi_n N (V)$, choose representatives $\Sigmaup^m ({}^U\!R) \to M$ and $\Sigmaup^n(R^V) \to N$ and take their smash product to obtain a map
\[ \Sigmaup^{m+n} (\SS^{U \times V}) \to \Sigmaup^{m+n} (\SS^{U \times V}) \otimes R \simeq \Sigmaup^m ({}^U\!R) \otimes_R \Sigmaup^n (R^V) \to M \otimes_R N \]
and thus an element $x \otimes y \in \pi_{m+n}(M \otimes_R N)(U \times V)$; this is suitably natural so that it descends to a map out of the Day convolution tensor product $\pi_{\ast} M \otimes_{\pi_\ast R} \pi_{\ast} N$ to $\pi_{\ast} (M \otimes_R N)$. This map is not usually an isomorphism. Instead, we construct a spectral sequence that converges to $\pi_{\ast}(M\otimes_R N)$, in which this map appears as an edge homomorphism.

Let $S$ denote the class of left $R$-modules of the form $\Sigmaup^n R^X$ for $n\in\ZZ$ and $X \in C$. By \cite[Pr. 7.2.1.4]{HA}, there exists an $S$-free $S$-hypercovering $\fromto{P_\bullet}{N}$ in the (presentable) stable $\infty$-category $\Mod^{\ell}_R$.

\begin{lem} For any $S$-hypercovering $\fromto{P_\bullet}{N}$, we have that $\left| P_\bullet \right| \simeq N$.
\end{lem}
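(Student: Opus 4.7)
The plan is to test the augmentation $\epsilon \colon |P_\bullet| \to N$ against a generating collection of corepresentable functors. First, by an argument parallel to the proof of Lemma \ref{lem:projgen} — transported along the free $R$-module functor $R \otimes - \colon \Mack(C,C_{\dag},C^{\dag};\Sp_{\geq 0}) \to \Mod^\ell_{R, \geq 0}$, which preserves compactness and projectivity since the forgetful functor is conservative and preserves sifted colimits — the collection $\{R^X : X \in C\}$ is a set of compact projective generators of $\Mod^\ell_{R, \geq 0}$. Consequently their shifts $\{R^X[n] : X \in C,\ n \in \ZZ\}$ generate the whole of $\Mod^\ell_R$ under colimits, so it suffices to show that for every such $X$ and $n$, the map
\[ \Map_{\Mod^\ell_R}(R^X[n], |P_\bullet|) \to \Map_{\Mod^\ell_R}(R^X[n], N) \]
induced by $\epsilon$ is an equivalence of spaces.

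Fix $X$ and $n$. Compactness of $R^X[n]$ implies that $\Map_{\Mod^\ell_R}(R^X[n], -)$ preserves filtered colimits, and its projectivity (after a shift to arrange the correct connectivity) implies it preserves geometric realizations of simplicial objects. Hence
\[ \Map_{\Mod^\ell_R}(R^X[n], |P_\bullet|) \simeq |\Map_{\Mod^\ell_R}(R^X[n], P_\bullet)|. \]
The defining conditions of an $S$-hypercovering — $S$-freeness at each simplicial degree, together with $\pi_0$-surjectivity onto the matching objects when tested against objects of $S$ — translate under the corepresentable functor $\Map_{\Mod^\ell_R}(R^X[n],-)$ into the statement that the augmented simplicial space $\Map_{\Mod^\ell_R}(R^X[n], P_\bullet) \to \Map_{\Mod^\ell_R}(R^X[n], N)$ is a hypercover of Kan complexes in the sense of \cite[\S 6.5.3]{HTT}. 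The classical fact that hypercovers of spaces recover the base on geometric realization then yields the equivalence we want.

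The main obstacle is the translation between hypercoverings in the stable $\infty$-category $\Mod^\ell_R$ and hypercoverings in $\Kan$: one must verify that the matching objects of the simplicial object $P_\bullet$ in $\Mod^\ell_R$ are carried by $\Map_{\Mod^\ell_R}(R^X[n],-)$ to the matching objects of the simplicial space $\Map_{\Mod^\ell_R}(R^X[n],P_\bullet)$, and that the homotopical surjectivity condition in the stable setting descends to $\pi_0$-surjectivity after applying a corepresentable functor. The first point follows from preservation of finite limits by representable functors, and the second is the reason the test class $S$ is built from all shifts $\Sigmaup^n R^X$ rather than only the $R^X$ themselves; together they are precisely the translation encapsulated in \cite[Pr. 7.2.1.4]{HA}, and with it in hand the proof proceeds as sketched.
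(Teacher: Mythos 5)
There is a genuine gap at the pivot of your argument: the claim that ``projectivity (after a shift to arrange the correct connectivity)'' lets $\Map_{\Mod^{\ell}_R}(R^X[n],-)$ commute with the geometric realization $|P_\bullet|$. Projectivity of $R^X[n]$ only gives preservation of geometric realizations of simplicial objects lying in a \emph{fixed} connective part $\Mod^{\ell}_{R,\geq m}$; but the terms of an $S$-free $S$-hypercovering are coproducts of objects $\Sigma^m R^X$ with $m\in\ZZ$ arbitrary, so $P_\bullet$ need not be bounded below, uniformly or otherwise, and no single shift repairs this. What compactness does give is an equivalence of mapping \emph{spectra} out of $R^X[n]$ with the levelwise realization computed in spectra; but $\Omega^{\infty}$ does not commute with realizations of nonconnective simplicial spectra, so $\Map_{\Mod^{\ell}_R}(R^X[n],|P_\bullet|)\simeq|\Map_{\Mod^{\ell}_R}(R^X[n],P_\bullet)|$ is exactly the convergence assertion at stake, not a formal consequence of compactness and projectivity. (Note also that once you grant this interchange, your hypercover-of-spaces step computes $|\Map(R^X[n],P_\bullet)|\simeq\Map(R^X[n],N)$, so the interchange is the entire content of the lemma; it cannot be waved through.)

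The paper's proof is designed precisely to control this convergence issue: it truncates, observing that $\fromto{\tau_{\geq n}P_\bullet}{\tau_{\geq n}N}$ is an $S_{\geq n}$-hypercovering in $\Mack(C;\Sp_{\geq n})$, where the shifted corepresentables \emph{are} compact projective generators (Lm.~\ref{lem:projgen}), so that mapping spaces out of them do preserve the realization and hypercompleteness of $\Kan$ yields $|\tau_{\geq n}P_\bullet|\simeq\tau_{\geq n}N$ for each $n$; it then assembles these equivalences using right completeness of the $t$-structure, writing $N$ and the levels of $P_\bullet$ as colimits of their connective covers. Your outline is close in spirit (test against corepresentables, use hypercompleteness of spaces), but to make it correct you would have to reintroduce exactly this truncation-and-right-completeness step in place of the asserted interchange of $\Map(R^X[n],-)$ with $|{-}|$.
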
 
\begin{proof} Let $S_{\geq n}$ be the subset of $S$ on $\Sigmaup^m\circ R^X$ for $m \geq n$. From our $S$-hypercovering $\fromto{P_\bullet}{N}$, we obtain $S_{\geq n}$-hypercoverings $\fromto{\tau_{\geq n} P_\bullet}{\tau_{\geq n} N}$ for every $n \in \ZZ$. Since the $\Sigma^n S^X$, $X \in C$ constitute a set of projective generators for $\Mack(C;\Sp_{\geq n})$ by Lm. \ref{lem:projgen}, we have that $\left| \tau_{\geq n} P_\bullet \right| \simeq \tau_{\geq n} N$ by the hypercompleteness of $\Kan$. By the right completeness of the $t$-structure, we deduce that $\left| P_\bullet \right| \simeq N$.
\end{proof}

By passing to the skeletal filtration of $M\otimes_R|P_{\bullet}|$, we obtain a spectral sequence $\{E_r^{p,q},d_r\}_{r\geq 1}$ that converges to $\pi_{p+q}(M\otimes_RN)$. The complex $(E_1^{\ast,q},d_1)$ is the normalized chain complex $N_{\ast}(\pi_q(M\otimes_RP_{\bullet}))$. 


To proceed, we need to prove the following analogue of \cite[Pr. 7.2.1.17]{HA}.

\begin{lem} If $P$ is a direct sum of objects in $S$, then the map
\[ \fromto{\Tor_0^{\pi_{\ast}R}(\pi_{\ast}M,\pi_{\ast}P)}{\pi_\ast(M\otimes_R P)} \]
is an isomorphism.
\end{lem}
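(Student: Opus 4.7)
The plan is to reduce to the case of a single free module $P = R^X$ for some $X \in C$ and then verify the comparison directly using the strong dualizability of $\SS^X$.

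First I would observe that both sides are compatible with direct sums and suspensions in the variable $P$. Concretely, the functor $P \mapsto \pi_{\ast}(M \otimes_R P)$ carries direct sums to direct sums because $M \otimes_R -$ preserves colimits and $\pi_{\ast}$ preserves direct sums; the functor $P \mapsto \Tor_0^{\pi_{\ast} R}(\pi_{\ast} M, \pi_{\ast} P)$ carries direct sums to direct sums because Day convolution of $A^{\heartsuit}$-valued Mackey functors preserves colimits separately in each variable (thanks to the cartesian disjunctive hypothesis, cf.\ \ref{nul:easyfactaboutaeffotimes}) and $\Tor_0$ is a cokernel of a map between such Day convolutions. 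One checks that the natural comparison map is compatible with this decomposition, and similarly that it is compatible with suspension in $P$ via the evident sign. Hence it suffices to treat the case $P = R^X$ with $X \in C$.

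Now I would use duality: since $\SS^X$ is strongly dualizable with dual $\SS^{DX}$, we have natural equivalences
\[ M \otimes_R R^X \simeq M \otimes_R (R \otimes \SS^X) \simeq M \otimes \SS^X \simeq F(\SS^{DX}, M), \]
and by the corollary in the previous section applied to $M$ and to $R$,
\[ \pi_n(M \otimes_R R^X)(Y) \cong \pi_n M (DX \times Y), \qquad \pi_n R^X(Y) \cong \pi_n R(DX \times Y). \]
The second of these identifies $\pi_{\ast} R^X$ with the Day convolution $\pi_{\ast} R \otimes \pi_{\ast} \SS^X$, where the right $\pi_{\ast} R$-module structure comes from the first factor, since $\SS^X$ is dualizable and hence $\pi_{\ast}(R \otimes \SS^X) \cong \pi_{\ast} R \otimes \pi_{\ast} \SS^X$ as right $\pi_{\ast} R$-modules. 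Consequently, by base change of $\Tor_0$ along a free extension,
\[ \Tor_0^{\pi_{\ast} R}(\pi_{\ast} M, \pi_{\ast} R^X) \cong \pi_{\ast} M \otimes \pi_{\ast} \SS^X, \]
and again by strong dualizability the right-hand side evaluates at $Y$ to $\pi_{\ast} M(DX \times Y)$. So both graded Mackey functors are naturally isomorphic.

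It remains to check that the comparison map of the lemma agrees with these identifications. Tracing through the construction, for $x \in \pi_m M(U)$ and $y \in \pi_n R^X(V) \cong \pi_n R(DX \times V)$, the image $x \otimes y \in \pi_{m+n}(M \otimes_R R^X)(U \times V) \cong \pi_{m+n} M(DX \times U \times V)$ is obtained by pairing $x$ with $y$ through the right $R$-action and the coevaluation $\SS \to \SS^{DX} \otimes \SS^X$; under the dualization isomorphism, this is precisely the image of $x \otimes y$ under the Day convolution tensor product, proving the result.

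The main technical obstacle is the bookkeeping in the last paragraph --- confirming that the chain of identifications actually carries the explicit comparison map to the identity. Conceptually the lemma is simply the statement that on a free module $R^X$ there is no room for higher $\Tor$, so everything is controlled by the duality formula $M \otimes \SS^X \simeq F(\SS^{DX}, M)$; the whole argument is closely parallel to \cite[Pr.~7.2.1.17]{HA}, with ordinary $\pi_{\ast}$ replaced by Mackey-functor valued homotopy and ordinary free modules replaced by the $R^X$.
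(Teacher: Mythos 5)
Your reduction to $P=R^X$ and the overall strategy (compute both sides as $\pi_{\ast}M(DX\times -)$ via strong dualizability of $\SS^X$) are the same as the paper's, but there is a genuine error in the middle of your argument: the assertion that ``$\SS^X$ is dualizable and hence $\pi_{\ast}(R\otimes\SS^X)\cong\pi_{\ast}R\otimes\pi_{\ast}\SS^X$'' is false. Dualizability gives no K\"unneth collapse: it gives $\pi_{\ast}(R\otimes\SS^X)(Y)\cong\pi_{\ast}R(DX\times Y)$, which is \emph{not} the value at $Y$ of the graded Day convolution $\pi_{\ast}R\otimes\pi_{\ast}\SS^X$, since $\pi_{\ast}\SS^X$ carries all the higher (equivariant stable) homotopy of $\SS^X$ and the convolution acquires cross terms in positive degrees (already $R=\SS$ gives a counterexample). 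For the same reason your later claim that $(\pi_{\ast}M\otimes\pi_{\ast}\SS^X)(Y)\cong\pi_{\ast}M(DX\times Y)$ fails. Note that your claimed isomorphism is itself a statement of exactly the kind the lemma is meant to establish, so assuming it is close to circular.

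The repair is to replace $\pi_{\ast}\SS^X$ throughout by $\tau_{\leq 0}\SS^X=\pi_0\SS^X$, the corepresentable object of the heart, which is what the paper does. Since $\tau_{\leq 0}\SS^Y$ corepresents evaluation at $Y$ for $\Ab$-valued Mackey functors and $\tau_{\leq 0}\SS^X$ is dualizable in the heart with dual $\tau_{\leq 0}\SS^{DX}$, one has $(\pi_{\ast}E\otimes\tau_{\leq 0}\SS^X)(Y)\cong\pi_{\ast}E(Y\times DX)$, and dualizability of $\SS^X$ at the spectral level gives $\pi_{\ast}(E\otimes\SS^X)(Y)\cong\pi_{\ast}E(Y\times DX)$ as well; hence $\pi_{\ast}E\otimes\tau_{\leq 0}\SS^X\cong\pi_{\ast}(E\otimes\SS^X)$ for any spectral Mackey functor $E$. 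Applying this to $R$ identifies $\pi_{\ast}R^X$ with $\pi_{\ast}R\otimes\tau_{\leq 0}\SS^X$, your base-change step then gives $\Tor_0^{\pi_{\ast}R}(\pi_{\ast}M,\pi_{\ast}R^X)\cong\pi_{\ast}M\otimes\tau_{\leq 0}\SS^X$, and applying the claim to $M$ gives $\pi_{\ast}(M\otimes_R R^X)$; with this correction the rest of your argument, including the identification of the comparison map (which the paper also only sketches), goes through and coincides with the paper's proof.
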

\begin{proof} Both sides commute with direct sums and shifts, so we reduce to the case of $P = R^X$.  We claim first that for any spectral Mackey functor E,
\[ \pi_\ast E \otimes \tau_{\leq 0} \SS^X \cong \pi_{\ast} (E \otimes \SS^X).\]
Since $\tau_{\leq 0} \SS^Y$ corepresents evaluation at $Y$ for $\Ab$-valued Mackey functors, and $\tau_{\leq 0} \SS^X$ has dual $\tau_{\leq 0} \SS^{DX}$, we have $(\pi_\ast E \otimes \tau_{\leq 0} \SS^X)(Y) \cong (\pi_\ast E)(Y \times DX)$. Similarly, corepresentability and strong dualizability on the level of the $\Sp$-valued Mackey functors implies that $\pi_{\ast} (E \otimes \SS^X) (Y) \cong (\pi_{\ast} E) (Y \times DX)$, so we conclude. Now we apply this claim both for $M$ and $R$ to see that
\begin{align*} \pi_\ast M \otimes_{\pi_\ast R} \pi_{\ast} (R^X) & \cong \pi_\ast M \otimes_{\pi_\ast R} (\pi_\ast R \otimes \tau_{\leq 0} \SS^X) \\
	& \cong \pi_\ast M \otimes \tau_{\leq 0} \SS^X \\
	& \cong \pi_{\ast}(M \otimes \SS^X) \\
	& \cong \pi_{\ast}(M \otimes_R R^X).
\end{align*}
We leave the identification of the specified map with this isomorphism to the reader.
\end{proof}

We thus obtain an isomorphism
\[\Tor_0^{\pi_{\ast}R}(\pi_{\ast}M,\pi_{\ast}P_{\bullet})\cong\pi_\ast(M\otimes_RP_{\bullet}).\]

As $P_{\bullet}$ is an $S$-free $S$-hypercovering of $N$, $N_{\ast}(\pi_{\ast}P_{\bullet})$ is a resolution of $\pi_{\ast}N$ by projective $\pi_{\ast}R$-modules. It follows that the $E_2$ page is given by
\[E_2^{p,q}\cong\Tor_p^{\pi_{\ast}R}(\pi_{\ast}M,\pi_{\ast}N)_q.\]

As in \cite[Cor. 7.2.1.23]{HA}, we have an immediate corollary.
\begin{cor} Suppose $C$, $R$, $M$, and $N$ as above. Suppose that $R$, $M$, and $N$ are all connective. Then $M\otimes_RN$ is connective, and one has an isomorphism of ordinary Mackey functors
\[\pi_0(M\otimes_RN)\cong\pi_0M\otimes_{\pi_0R}\pi_0N.\]
\end{cor}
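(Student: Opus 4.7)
The plan is to deduce both claims directly from the Künneth spectral sequence. Since $R$, $M$, and $N$ are connective, the graded Mackey functors $\pi_{\ast}R$, $\pi_{\ast}M$, and $\pi_{\ast}N$ are concentrated in nonnegative degrees, so $\Tor_p^{\pi_{\ast}R}(\pi_{\ast}M,\pi_{\ast}N)_q$ vanishes whenever $p<0$ or $q<0$. Thus the spectral sequence
\[E_2^{p,q}\cong\Tor_p^{\pi_{\ast}R}(\pi_{\ast}M,\pi_{\ast}N)_q\Rightarrow\pi_{p+q}(M\otimes_RN)\]
constructed above is a first-quadrant spectral sequence.

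For connectivity: for each $n<0$, every bidegree $(p,q)$ on the diagonal $p+q=n$ satisfies $p<0$ or $q<0$, so $E_2^{p,q}=0$. Since the spectral sequence converges to a finite filtration on $\pi_{p+q}(M\otimes_RN)$, this forces $\pi_n(M\otimes_RN)\simeq 0$ for $n<0$.

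For the identification of $\pi_0$: the only diagonal entry with $p+q=0$ that is not forced to vanish is
\[E_2^{0,0}\cong\Tor_0^{\pi_{\ast}R}(\pi_{\ast}M,\pi_{\ast}N)_0\cong\pi_0M\otimes_{\pi_0R}\pi_0N.\]
The outgoing differentials $d_r\colon E_r^{0,0}\to E_r^{-r,r-1}$ vanish because their targets lie in $p<0$, and the incoming differentials $d_r\colon E_r^{r,1-r}\to E_r^{0,0}$ vanish because their sources lie in $q<0$ for $r\ge2$. Hence $E_{\infty}^{0,0}\cong E_2^{0,0}$, and since the filtration on $\pi_0(M\otimes_RN)$ consists of this single stratum, the edge homomorphism discussed preceding the construction of the spectral sequence provides the desired isomorphism.

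The only point that requires attention is to ensure that the $S$-free $S$-hypercovering $P_\bullet\to N$ used in the construction can be chosen termwise connective, so that the skeletal filtration of $M\otimes_R|P_\bullet|$ actually yields a first-quadrant spectral sequence in the sense of \cite[Pr. 1.2.4.5]{HA}. This is possible because $N$ is connective and each $R^X$ is connective when $R$ is, so at each stage of the construction one may take the required map to come from a direct sum of modules of the form $R^X$ (with no shifts) onto the connective cover of the relevant kernel. This is the main step in need of verification, though it is essentially routine.
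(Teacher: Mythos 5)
Your main argument is exactly the intended one: the paper gives no separate proof, but simply invokes the analogue of \cite[Cor. 7.2.1.23]{HA}, i.e.\ precisely the first-quadrant vanishing of $E_2^{p,q}\cong\Tor_p^{\pi_\ast R}(\pi_\ast M,\pi_\ast N)_q$ for $p<0$ or $q<0$, convergence, and the identification of $E_\infty^{0,0}=E_2^{0,0}$ with $\pi_0M\otimes_{\pi_0R}\pi_0N$ via the edge map. That part is correct and is the same route as the paper.

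The final paragraph, however, is both unnecessary and, as stated, would be counterproductive. Convergence of the spectral sequence does not require the terms $M\otimes_R P_p$ to be connective: the skeletal filtration of $M\otimes_R|P_\bullet|$ vanishes in negative filtration degrees, so the convergence criterion already recorded in the paper (the remark following Cnstr.~\ref{sseqcnstr}, i.e.\ \cite[1.2.2.14]{HA}, using that the $t$-structure on spectral Mackey functors is compatible with sequential colimits) applies verbatim to the $S$-free $S$-hypercovering with $S=\{\Sigmaup^nR^X\}_{n\in\ZZ}$ used in the construction; that is why the paper asserts convergence with no connectivity hypotheses. Moreover, replacing $S$ by the unshifted generators $\{R^X\}$ would endanger the identification $E_2^{p,q}\cong\Tor_p^{\pi_\ast R}(\pi_\ast M,\pi_\ast N)_q$: an unshifted-free hypercovering only forces surjectivity on $\pi_0$-level mapping groups, so the normalized complex $N_\ast(\pi_\ast P_\bullet)$ need not be a resolution of $\pi_\ast N$ in positive internal degrees (already $\pi_q(P_0)\to\pi_q(N)$ need not be surjective for $q>0$), which is exactly what the shifted generators are there to guarantee. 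So you should simply drop that modification and quote the convergence already established for the paper's spectral sequence; with that, your argument is complete.
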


\begin{exm} If $C$ is the category of finite $G$-sets for $G$ a finite group, then our Künneth spectral sequence recovers that of Lewis and Mandell in \cite{Lewis01032006}. We refer the reader there to a more extensive discussion of this spectral sequence in that particular case.
\end{exm}


\section{Symmetric monoidal Waldhausen bicartesian fibrations} In \cite{K1}, we define an $O^{\otimes}$-monoidal Waldhausen $\infty$-category for any $\infty$-operad $O^{\otimes}$ as an $O^{\otimes}$-algebra in the symmetric monoidal $\infty$-category $\Wald_{\infty}^{\otimes}$. We give two equivalent fibrational formulations of this notion.

\begin{dfn}\label{dfn:OmonoidalWald} Suppose $O^{\otimes}$ an $\infty$-operad. An \textbf{\emph{$O^{\otimes}$-monoidal Waldhausen $\infty$-category}} consists of a pair cocartesian fibration \cite[Df. 3.8]{K1}
\begin{equation*}
p^{\otimes}\colon\fromto{\XX^{\otimes}}{O^{\otimes}}
\end{equation*}
such that the following conditions obtain.
\begin{enumerate}[(\ref{dfn:OmonoidalWald}.1)]
\item The composite
\begin{equation*}
\XX^{\otimes}\to O^{\otimes}\to \N\Lambdaup(\FF)
\end{equation*}
exhibits $\XX^{\otimes}$ as an $\infty$-operad.
\item The fiber $p\colon\fromto{\XX}{O}$ over $\ast\in \N\Lambdaup(\FF)$ is a Waldhausen cocartesian fibration.
\item For any finite set $I$ and any choice of inert morphisms $\{\rho_i\colon\fromto{s}{s_i}\}_{i\in I}$ covering the inert morphisms $\fromto{I}{\{i\}}$, an edge $\eta$ of $\XX^{\otimes}_{s}$ is ingressive if and only if, for every $i\in I$, the edge $\rho_{i,!}(\eta)$ of $\XX_{s_i}$ is ingressive.
\item For any finite set $I$, any morphism $\mu\colon\fromto{s}{t}$ of $O^{\otimes}$ covering the unique active morphism $\fromto{I}{\{\xi\}}$, and any choice of inert morphisms $\{\fromto{s}{s_i}\ |\ i\in I\}$ covering the inert morphisms $\fromto{I}{\{i\}}$, the functor of pairs
\begin{equation*}
\mu_!\colon\fromto{\prod_{i\in I}\XX_{s_i}\simeq\XX^{\otimes}_{s}}{\XX_{t}}
\end{equation*}
is exact separately in each variable \cite{K3}.
\suspend{enumerate}

Dually, suppose $O_{\otimes}$ an $\infty$-anti-operad. Then a \textbf{\emph{$O_{\otimes}$-monoidal Waldhausen $\infty$-category}} is a pair cartesian fibration
\begin{equation*}
p_{\otimes}\colon\fromto{\XX_{\otimes}}{O_{\otimes}}
\end{equation*}
such that the following conditions obtain.
\resume{enumerate}[{[(\ref{dfn:OmonoidalWald}.1)]}]
\item The composition
\begin{equation*}
\XX_{\otimes}\to O_{\otimes}\to \N\Lambdaup(\FF)^{\op}
\end{equation*}
exhibits $\XX_{\otimes}$ as an $\infty$-anti-operad.
\item The fiber $p\colon\fromto{\XX}{O}$ over $\ast\in \N\Lambdaup(\FF)^{\op}$ is a Waldhausen cartesian fibration.
\item For any finite set $I$ and any choice of inert morphisms $\{\pi_i\colon\fromto{s}{s_i}\}_{i\in I}$ covering the inert morphisms $\fromto{I}{\{i\}}$, an edge $\eta$ of $\XX^{\otimes}_{s}$ is ingressive if and only if, for every $i\in I$, the edge $\pi_i^{\star}(\eta)$ of $\XX_{s_i}$ is ingressive.
\item For any finite set $I$, any morphism $\mu\colon\fromto{t}{s}$ of $O_{\otimes}$ covering the opposite of the unique active morphism $\fromto{I}{\{\xi\}}$, and any choice of inert morphisms $\{\fromto{s_i}{s}\}_{i\in I}$ covering the inert morphisms $\fromto{I}{\{i\}}$, the functor of pairs
\begin{equation*}
\mu^{\star}\colon\fromto{\prod_{i\in I}\XX_{s_i}\simeq\XX_{\otimes,s}}{\XX_{t}}
\end{equation*}
is exact separately in each variable.
\end{enumerate}
\end{dfn}

Employing \cite[Ex. 2.4.2.4 and Pr. 2.4.2.5]{HA} and \cite[Lm 1.4]{K3}, one deduces the following.
\begin{prp}\label{prp:Waldbicartmon} Suppose $O^{\otimes}$ (respectively, $O_{\otimes}$) an $\infty$-operad (resp., an $\infty$-anti-operad). Then the functor
\[\fromto{O^{\otimes}}{\Cat_{\infty}}\textrm{\qquad(resp., the functor\quad}\fromto{(O_{\otimes})^{\op}}{\Cat_{\infty}}\textrm{\quad)}\]
classifying an $O^{\otimes}$-monoidal Waldhausen $\infty$-category (resp., an $O_{\otimes}$-monoidal Waldhausen $\infty$-category) factors through an essentially unique morphism of $\infty$-operads
\[\fromto{O^{\otimes}}{\Wald_{\infty}^{\otimes}}\textrm{\qquad(resp., the functor\quad}\fromto{(O_{\otimes})^{\op}}{\Wald_{\infty}^{\otimes}}\textrm{\quad)}\]
\end{prp}

\begin{dfn}\label{dfn:symmonWaldbicart} Now suppose $(C,C_{\dag},C^{\dag})$ a left complete disjunctive triple. A \textbf{\emph{symmetric monoidal Waldhausen bicartesian fibration}}
\begin{equation*}
p_{\boxtimes}\colon\fromto{\XX_{\boxtimes}}{C_{\times}}
\end{equation*}
over $(C,C_{\dag},C^{\dag})$ is a functor of pairs $\fromto{\XX_{\boxtimes}}{(C_{\times})^{\flat}}$ with the following properties.
\begin{enumerate}[(\ref{dfn:symmonWaldbicart}.1)]
\item The underlying functor $p_{\boxtimes}\colon\fromto{\XX_{\boxtimes}}{C_{\times}}$ is an inner fibration.
\item For any egressive morphism $(\phi,\omega):\fibto{(I,X)}{(J,Y)}$ of $C_{\times}$ (in the sense of Nt. \ref{ntn:triplestronCtimes}) and for any object $Q$ of the fiber $(\XX_{\boxtimes})_{(J,Y)}$, there exists a $p_{\boxtimes}$-cartesian morphism $\fromto{P}{Q}$ covering $(\phi,\omega)$.
\item The composition
\[\XX_{\boxtimes}\to C_{\times}\to \N\Lambdaup(\FF)^{\op}\]
exhibits $\XX_{\boxtimes}$ as an $\infty$-anti-operad.
\item The fiber $p\colon\fromto{\XX}{C}$ over $\ast\in \N\Lambdaup(\FF)^{\op}$ is a Waldhausen bicartesian fibration $\fromto{\XX}{C}$ over $(C,C_{\dag},C^{\dag})$.
\end{enumerate}
\end{dfn}

\begin{nul} This is a lot of data, so let's unpack it a bit.

First, a symmetric monoidal Waldhausen bicartesian fibration
\[p_{\boxtimes}\colon\fromto{\XX_{\boxtimes}}{C_{\times}}\]
over $(C,C_{\dag},C^{\dag})$ admits an underlying Waldhausen bicartesian fibration $p\colon\fromto{\XX}{C}$ over $(C,C_{\dag},C^{\dag})$. This provides, for any object $S\in C$, a Waldhausen $\infty$-category $\XX_S$, and for any morphism $\phi\colon\fromto{S}{T}$ of $C$, it provides an exact ``pushforward'' functor $\phi_!\colon\fromto{\XX_S}{\XX_T}$ whenever $\phi$ is ingressive and an exact ``pullback'' functor $\phi^{\star}\colon\fromto{\XX_T}{\XX_S}$ whenever $\phi$ is egressive. These are compatible with composition, and when $\phi$ is ingressive and (therefore) egressive, these two are adjoint.

There's more structure here: for any finite set $I$ and any $I$-tuple $(S_i)_{i\in I}$ of objects of $C$ with product $S$, consider the cartesian edge
\[\fromto{(\{\xi\},S)}{(I,S_I)}\]
of $C_\times$ lying over the morphism $\fromto{\{\xi\}}{I}$ of $\Lambdaup(\FF)^\op$ corresponding to the unique active morphism $\fromto{I}{\{\xi\}}$ of $\Lambdaup(\FF)$; it is of course egressive in $\XX_\boxtimes$. Hence there is a functor
\[\bigboxtimes_{i\in I}\colon\fromto{\prod_{i\in I}\XX_{S_i}}{\XX_{S}},\]
exact separately in each variable. If $(\phi_i\colon\fromto{S_i}{T_i})_{i\in I}$ is an $I$-tuple of morphisms of $C$ with product $\phi\colon\fromto{S}{T}$ then the square
\begin{equation*}
\begin{tikzpicture}[baseline]
\matrix(m)[matrix of math nodes,
row sep=6ex, column sep=6ex,
text height=1.5ex, text depth=0.25ex]
{\prod_{i\in I}\XX_{T_i} & \XX_{T} \\
\prod_{i\in I}\XX_{S_i} & \XX_{S} \\ };
\path[>=stealth,->,font=\scriptsize]
(m-1-1) edge node[above]{$\bigboxtimes_{i\in I}$} (m-1-2)
edge node[left]{$\prod_{i\in I}\phi_i^\star$} (m-2-1)
(m-1-2) edge node[right]{$\phi^\star$} (m-2-2)
(m-2-1) edge node[below]{$\bigboxtimes_{i\in I}$} (m-2-2);
\end{tikzpicture}
\end{equation*}
commutes.

When $(C,C_{\dag},C^{\dag})$ is cartesian, this structure endows each fiber $\XX_S$ with a symmetric monoidal structure: indeed, for any finite set $I$, we may define
\[\bigotimes_{i\in I}\coloneq\Deltaup^{\ast}\circ\bigboxtimes_{i\in I},\]
where $\Deltaup\colon\fromto{S}{S^I}$ is the diagonal. One sees easily that the commutativity of the square above implies that any functor $\phi^\star$ induced by a morphism $\phi\colon\fromto{S}{T}$ is symmetric monoidal in a natural way. Furthermore, a simple argument demonstrates that the external product $\boxtimes_{i\in I}$ can be recovered from the symmetric monoidal structures along with the pullback functors; for example, $X\boxtimes Y\simeq\pr_1^\star X\otimes\pr_2^\star Y$.

Now it follows from \cite[Cor. 7.3.2.7]{HA} that if $\phi\colon\fromto{S}{T}$ is both ingressive and egressive in $C$, then $\phi_!$ extends to a lax symmetric monoidal functor $\fromto{\XX_S^\otimes}{\XX_T^\otimes}$.
\end{nul}

\begin{lem}\label{potimesisadequate} Suppose $(C,C_{\dag},C^{\dag})$ a left complete disjunctive triple, and suppose
\[p_{\boxtimes}\colon\fromto{\XX_{\boxtimes}}{C_{\times}}\]
a symmetric monoidal Waldhausen bicartesian fibration over $(C,C_{\dag},C^{\dag})$. Then the inner fibration
\[p_{\boxtimes}\colon\fromto{\XX_{\boxtimes}}{C_{\times}}\]
is an adequate inner fibration \cite[Df. 10.3]{M1} for the triple $(C_{\times},(C_{\times})_{\dag},(C_{\times})^{\dag})$ (Nt. \ref{ntn:triplestronCtimes}).
\begin{proof} The only condition of adequate inner fibrations that isn't explicitly part of the definition above is the assertion that for any ingressive morphism $(\phi,\omega):\cofto{(I,X)}{(J,Y)}$ of $C_{\times}$ and for any object $P$ of the fiber $(\XX_{\boxtimes})_{(I,X)}$, there exists a $p_{\boxtimes}$-cocartesian morphism $\fromto{P}{Q}$ covering $(\phi,\omega)$.

So suppose that $(\phi,\omega):\cofto{(I,X)}{(J,Y)}$ is ingressive --- i.e., that $\phi\colon\fromto{J}{I}$ is a bijection and each morphism $\omega_{\phi^{-1}(i)}\colon\fromto{X_{i}}{Y_{\phi^{-1}(i)}}$ is ingressive ---, and suppose that $P$ is an object of $\XX_{\boxtimes}$ that lies over $(I,X)$. Then under the equivalence
\[(\XX_{\boxtimes})_{I}\simeq\prod_{i\in I}\XX_{\{i\}},\]
the object $P$ corresponds to a family $(P_{i})_{i\in I}$ of objects such that $P_{i}$ lies over $X_{i}$ for any $i\in I$. For each $i\in I$, select a $p$-cocartesian edge $\fromto{P_{i}}{Q_{\phi^{-1}(i)}}$ covering $\omega_{\phi^{-1}(i)}$. Now there is an essentially unique morphism $\fromto{P}{Q}$ covering $(\phi,\omega)$ that corresponds under the equivalence above to the edges $\fromto{P_{i}}{Q_{\phi^{-1}(i)}}$, and it is easy to see that it is $p_{\boxtimes}$-cocartesian.
\end{proof}
\end{lem}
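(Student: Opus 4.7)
The plan is to unwind what ``adequate inner fibration'' (as defined in [M1, Df. 10.3]) requires and check the list of conditions against what we have already assumed of $p_{\boxtimes}$. By inspection, conditions (\ref{dfn:symmonWaldbicart}.1) and (\ref{dfn:symmonWaldbicart}.2) already give the inner fibration property and the existence of $p_{\boxtimes}$-cartesian lifts of egressive morphisms in $C_{\times}$. The only missing ingredient is dually the existence of $p_{\boxtimes}$-cocartesian lifts of ingressive morphisms of $C_{\times}$, so the work of the proof reduces to producing these lifts.

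To this end, I would fix an ingressive morphism $(\phi,\omega)\colon (I,X)\to(J,Y)$ of $C_{\times}$, so that $\phi\colon J\to I$ is a bijection and each component $\omega_j\colon X_{\phi(j)}\to Y_j$ is ingressive in $C$, together with an object $P$ of $\XX_{\boxtimes}$ lying over $(I,X)$. The idea is to reduce the question to the fiberwise lifting problem in the underlying Waldhausen bicartesian fibration $p\colon \XX\to C$ using the anti-operad decomposition
\[
(\XX_{\boxtimes})_{I}\simeq\prod_{i\in I}\XX_{\{i\}},
\]
which writes $P$ as a tuple $(P_i)_{i\in I}$ with $P_i$ over $X_i$. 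For each $i\in I$, the Waldhausen bicartesian fibration structure on $\XX\to C$ provides a $p$-cocartesian lift $P_i\to Q_{\phi^{-1}(i)}$ of the ingressive edge $\omega_{\phi^{-1}(i)}\colon X_i\to Y_{\phi^{-1}(i)}$.

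The next step is to assemble these into a single morphism $P\to Q$ in $\XX_{\boxtimes}$ covering $(\phi,\omega)$; here $Q$ corresponds to the tuple $(Q_{\phi^{-1}(i)})_{i\in I}$ under the product decomposition on the $J$-fiber. Because $\phi$ is a bijection and the anti-operad structure identifies morphisms in the fibers with tuples of morphisms in the individual factors, this assembly is essentially unique, and the resulting edge is $p_{\boxtimes}$-cocartesian. The main (and essentially only) obstacle here is the verification of the cocartesian property from the fiberwise cocartesian lifts --- but this is a formal consequence of the compatibility between the anti-operad decomposition and pointwise cocartesian transport, so the argument should go through without any further subtlety.
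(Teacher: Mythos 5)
Your proposal is correct and follows the paper's proof essentially verbatim: reduce to producing $p_{\boxtimes}$-cocartesian lifts of ingressive morphisms, use the decomposition $(\XX_{\boxtimes})_I\simeq\prod_{i\in I}\XX_{\{i\}}$ to write $P$ as a tuple, lift each component via a $p$-cocartesian edge in the underlying Waldhausen bicartesian fibration, and assemble. The paper likewise leaves the final verification that the assembled edge is $p_{\boxtimes}$-cocartesian as an easy check, so there is nothing missing relative to the published argument.
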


If $(C,C_{\dag},C^{\dag})$ is a left complete disjunctive triple, and if $p_{\boxtimes}\colon\fromto{\XX_{\boxtimes}}{C_{\times}}$ a symmetric monoidal Waldhausen bicartesian fibration for $(C,C_{\dag},C^{\dag})$, then our goal is now to equip the unfurling of $\XX$ with the structure of a $A^{\eff}(C)^{\otimes}$-monoidal Waldhausen structure. It will then follow that the corresponding Mackey functor is in fact a commutative Green functor.

\begin{cnstr} Suppose $(C,C_{\dag},C^{\dag})$ a left complete disjunctive triple, and suppose
\[p_{\boxtimes}\colon\fromto{\XX_{\boxtimes}}{C_{\times}}\]
a symmetric monoidal Waldhausen bicartesian fibration over $(C,C_{\dag},C^{\dag})$. Then we define $\Upsilonup(\XX/(C,C_{\dag},C^{\dag}))^{\otimes}$ as the pullback
\begin{equation*}
\Upsilonup(\XX_{\boxtimes}/(C_{\times},(C_{\times})_{\dag},(C_{\times})^{\dag}))\times_{A^{\eff}(C_{\times},(C_{\times})_{\dag},(C_{\times})^{\dag})}A^{\eff}(C,C_{\dag},C^{\dag})^{\otimes}.
\end{equation*}
The inner fibration \cite[Lm. 11.4]{M1}
\[\fromto{\Upsilonup(\XX_{\boxtimes}/(C_{\times},(C_{\times})_{\dag},(C_{\times})^{\dag}))}{A^{\eff}(C_{\times},(C_{\times})_{\dag},(C_{\times})^{\dag})}\]
pulls back to an inner fibration
\begin{equation*}
\Upsilonup(p)^{\otimes}\colon\fromto{\Upsilonup(\XX/(C,C_{\dag},C^{\dag}))^{\otimes}}{A^{\eff}(C,C_{\dag},C^{\dag})^{\otimes}}.
\end{equation*}
We call this the \textbf{\emph{unfurling}} of the symmetric monoidal Waldhausen bicartesian fibration $p_\boxtimes$.
\end{cnstr}

\begin{nul}\label{nul:unwindunfurlsymmonbicart} Suppose, for simplicity, that $(C,C_{\dag},C^{\dag})$ is cartesian. Unwinding the definitions, one sees that the objects of $\Upsilonup(\XX/(C,C_{\dag},C^{\dag}))^{\otimes}$ are precisely the objects of $\XX_{\boxtimes}$. These, in turn, can be thought of as triples $(I,S_I,P_{S_I})$ consiting of a finite set $I$, an $I$-tuple $S_I\coloneq (S_i)_{i\in I}$, and an object $P_{S_I}$ of the fiber
\[(\XX_\otimes)_{S_I}\simeq\prod_{i\in I}\XX_{S_i},\]
which corresponds to an $I$-tuple $(P_{S_i})_{i\in I}$ of objects of the various Waldhausen $\infty$-categories $\XX_{S_i}$. Now a morphism $\fromto{(J,T_J,Q_{T_J})}{(I,S_I,P_{S_I})}$ of the unfurling $\Upsilonup(\XX/(C,C_{\dag},C^{\dag}))^{\otimes}$ can be thought of as the following data:
\begin{enumerate}[(\ref{nul:unwindunfurlsymmonbicart}.1)]
\item a morphism $\phi\colon\fromto{J}{I}$ of $\Lambdaup(\FF)$;
\item a collection of diagrams
\begin{equation*}
\left\{
\begin{tikzpicture}[baseline]
\matrix(m)[matrix of math nodes, 
row sep=3ex, column sep=3ex, 
text height=1.5ex, text depth=0.25ex] 
{&U_{\phi(j)}&\\ 
T_{j}&&S_{\phi(j)},\\}; 
\path[>=stealth,->,font=\scriptsize] 
(m-1-2) edge[->>]  node[above left]{$\tau_{j}$} (m-2-1) 
edge[>->] node[above right]{$\sigma_{\phi(j)}$} (m-2-3);
\end{tikzpicture}
\quad\middle|\quad j\in \phi^{-1}(I)\ 
\right\}
\end{equation*}
of $C$ such that for any $j\in \phi^{-1}(I)$, the morphism $\sigma_j\colon\cofto{U_{\phi(j)}}{S_{\phi(j)}}$ is ingressive, and the morphism $\tau_j\colon\fibto{U_{\phi(j)}}{T_j}$ is egressive; and
\item a collection of morphisms
\[
\left\{
\fromto{\sigma_{\phi(j),!}\tau_{J_i}^\star\left(\bigboxtimes_{j\in J_i}Q_{T_j}\right)}{P_{S_i}}\quad\middle|\quad i\in I
\right\}
\]
in the various $\infty$-categories $\XX_{S_i}$, where $\tau_{J_i}$ is the edge $\fromto{(\{i\},U_i)}{(J_i,T_{J_i})}$ corresponding to the tuple $(\tau_j)_{j\in J_i}$.
\end{enumerate}
\end{nul}

\begin{thm} Suppose $(C,C_{\dag},C^{\dag})$ a left complete disjunctive triple, and suppose
\[p_{\boxtimes}\colon\fromto{\XX_{\boxtimes}}{C_{\times}}\]
a symmetric monoidal Waldhausen bicartesian fibration over $(C,C_{\dag},C^{\dag})$. Then the functor $\Upsilonup(p)^{\otimes}$ exhibits the $\infty$-category $\Upsilonup(\XX/(C,C_{\dag},C^{\dag}))^{\otimes}$ as a $A^{\eff}(C,C_{\dag},C^{\dag})^{\otimes}$-monoidal Waldhausen $\infty$-category.
\begin{proof} We first observe that, in light of \cite[Pr. 11.6]{M1} and Lm. \ref{potimesisadequate}, the functor $\Upsilonup(p)^{\otimes}$ is a cocartesian fibration. Let us check that the composite cocartesian fibration
\[\Upsilonup(\XX/(C,C_{\dag},C^{\dag}))^{\otimes}\to A^{\eff}(C,C_{\dag},C^{\dag})^{\otimes}\to \N\Lambdaup(\FF)\]
exhibits $\Upsilonup(\XX/(C,C_{\dag},C^{\dag}))^{\otimes}$ as a symmetric monoidal $\infty$-category.

To this end, it suffices to show that for any finite set $I$ and any $I$-tuple $S_I\coloneq(S_i)_{i\in I}$ of objects of $C$, the functor
\[\prod_{i\in I}\chi_{i,!}\colon\fromto{(\XX_\boxtimes)_{S_I}\simeq\Upsilonup(\XX/(C,C_{\dag},C^{\dag}))^{\otimes}_{S_I}}{\prod_{i\in I}\Upsilonup(\XX/(C,C_{\dag},C^{\dag}))_{S_i}\simeq\prod_{i\in I}\XX_{S_i}}\]
induced by the cocartesian edges covering the inert maps $\chi_i\colon\fromto{I}{\{i\}_+}$ is an equivalence. But this morphism can be identified with
\[\prod_{i\in I}\left(\id_!\circ\id^\star\circ\bigboxtimes_{i\in\{i\}}\right)\colon\fromto{\prod_{i\in I}\XX_{S_i}}{\prod_{i\in I}\XX_{S_i}},\]
which is homotopic to the identity.

Now for any finite set $J$, a morphism $\fromto{T}{S}$ of $A^{\eff}(C,C_{\dag},C^{\dag})^{\otimes}$ covering the unique active morphism $\fromto{J}{\{\xi\}}$ is represented by a collection of spans
\begin{equation*}
\left\{
\begin{tikzpicture}[baseline]
\matrix(m)[matrix of math nodes, 
row sep=3ex, column sep=3ex, 
text height=1.5ex, text depth=0.25ex] 
{&U&\\ 
T_{j}&&S.\\}; 
\path[>=stealth,->,font=\scriptsize] 
(m-1-2) edge[->>] node[above left]{$\phi_j$} (m-2-1) 
edge[>->] node[above right]{$\psi$} (m-2-3); 
\end{tikzpicture}\quad\middle|\quad j\in J 
\right\}
\end{equation*}
The tensor product functor can therefore be written as
\[\psi_!\circ\phi_J^\star\circ\bigboxtimes_{j\in J}\colon\fromto{\prod_{j\in J}\XX_{T_j}\simeq\XX_T}{\XX_S},\]
which is exact separately in each variable.
\end{proof}
\end{thm}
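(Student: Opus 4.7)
The plan is to verify the four conditions of Definition \ref{dfn:OmonoidalWald} for the functor $\Upsilonup(p)^{\otimes}\colon\fromto{\Upsilonup(\XX/(C,C_{\dag},C^{\dag}))^{\otimes}}{A^{\eff}(C,C_{\dag},C^{\dag})^{\otimes}}$. The author's proof begins by noting that, by Lm.~\ref{potimesisadequate} together with the ``omnibus'' unfurling result \cite[Pr. 11.6]{M1}, $\Upsilonup(p)^{\otimes}$ is already known to be a cocartesian fibration, and is in fact part of a pair cocartesian fibration in the evident way (the ingressive edges in the unfurling are inherited from the ingressive edges of $\XX$). So the heart of the matter is to check the three remaining conditions.

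First, I would check that the composition with $A^{\eff}(C,C_{\dag},C^{\dag})^{\otimes}\to\N\Lambdaup(\FF)$ exhibits $\Upsilonup(\XX/(C,C_{\dag},C^{\dag}))^{\otimes}$ as a symmetric monoidal $\infty$-category. Since the target $A^{\eff}(C,C_{\dag},C^{\dag})^{\otimes}$ is already a symmetric monoidal $\infty$-category (Prp.~\ref{prp:cartesiancocart} in the cartesian case, but the operadic case suffices for the $\infty$-operad condition), and we have a cocartesian fibration on top, we need only verify the operadic Segal condition: for any finite set $I$ and $I$-tuple $S_I=(S_i)_{i\in I}$, the inert cocartesian edges over $\chi_i\colon\fromto{I}{\{i\}_+}$ induce an equivalence
\[\equivto{\Upsilonup(\XX/(C,C_{\dag},C^{\dag}))^{\otimes}_{S_I}}{\prod_{i\in I}\Upsilonup(\XX/(C,C_{\dag},C^{\dag}))^{\otimes}_{S_i}}.\]
But the fibers on both sides identify with $(\XX_{\boxtimes})_{S_I}$ and $\prod_{i\in I}\XX_{S_i}$ respectively (the first by construction of the unfurling, the second because $\Upsilonup(\XX/(C,C_{\dag},C^{\dag}))_{S_i}\simeq\XX_{S_i}$), and the comparison map is the equivalence coming from the $\infty$-anti-operad structure on $\XX_{\boxtimes}$.

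Second, the fiber of $\Upsilonup(p)^{\otimes}$ over $\ast\in\N\Lambdaup(\FF)$ is exactly $\Upsilonup(\XX/(C,C_{\dag},C^{\dag}))$, which is a Waldhausen cocartesian fibration by the unfurling theorem from \cite{M1}. The local fibral ingressive condition (\ref{dfn:OmonoidalWald}.3) then follows directly from the identification $\Upsilonup(\XX/(C,C_{\dag},C^{\dag}))^{\otimes}_{S_I}\simeq\prod_{i\in I}\XX_{S_i}$ combined with the definition of ingressive edges in $\XX_{\boxtimes}$, which says exactly that such an edge is ingressive iff each of its projections to the $\XX_{S_i}$ is ingressive.

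The main obstacle is the last condition (\ref{dfn:OmonoidalWald}.4): exactness separately in each variable of the tensor-product functors associated to active morphisms of $A^{\eff}(C,C_{\dag},C^{\dag})^{\otimes}$. Here I would use the explicit description of morphisms in the unfurling given in \ref{nul:unwindunfurlsymmonbicart}: an active morphism $\fromto{T}{S}$ covering $\fromto{J}{\{\xi\}}$ is a family of spans $T_j\leftarrow U\to S$ with the left legs egressive and the right leg ingressive. The induced functor $\fromto{\prod_{j\in J}\XX_{T_j}}{\XX_S}$ factors as
\[\prod_{j\in J}\XX_{T_j}\ \tikz[baseline]\draw[>=stealth,->,font=\scriptsize](0,0.5ex)--node[above]{$\bigboxtimes_{j\in J}$}(1.1,0.5ex);\ \XX_{\prod_j T_j}\ \tikz[baseline]\draw[>=stealth,->,font=\scriptsize](0,0.5ex)--node[above]{$\phi_J^{\star}$}(0.8,0.5ex);\ \XX_U\ \tikz[baseline]\draw[>=stealth,->,font=\scriptsize](0,0.5ex)--node[above]{$\psi_!$}(0.7,0.5ex);\ \XX_S.\]
Each factor is exact separately in each variable: the external product $\bigboxtimes_{j\in J}$ by Definition \ref{dfn:symmonWaldbicart}, the pullback $\phi_J^{\star}$ (egressive) because $p$ is a Waldhausen bicartesian fibration, and the pushforward $\psi_!$ (ingressive) for the same reason. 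The composite is therefore exact separately in each variable, which completes the verification. The corollary that the resulting Mackey functor is a commutative Green functor then follows immediately by passing through Prp.~\ref{prp:Waldbicartmon}.
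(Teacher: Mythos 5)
Your proposal is correct and follows essentially the same route as the paper: the cocartesian fibration property comes from the unfurling theorem together with Lm.~\ref{potimesisadequate}, the symmetric monoidal (Segal) condition is checked by identifying the inert-edge comparison functor with the identity on $\prod_{i\in I}\XX_{S_i}$, and exactness in each variable for active morphisms is obtained from the factorization $\psi_!\circ\phi_J^{\star}\circ\bigboxtimes_{j\in J}$, exactly as in the paper. Your explicit verification of the fibral ingressive condition is a harmless elaboration of what the paper leaves implicit.
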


\noindent In light of Pr. \ref{prp:Waldbicartmon}, we have the following.

\begin{cor} Suppose $(C,C_{\dag},C^{\dag})$ a cartesian disjunctive triple that is either left complete or right complete, and suppose $p_{\boxtimes}\colon\fromto{\XX_{\boxtimes}}{C_{\times}}$ a symmetric monoidal Waldhausen bicartesian fibration over $(C,C_{\dag},C^{\dag})$. Then the cocartesian fibration $\Upsilonup(p)^{\otimes}$ is classified by a Green functor
\[\MM_{p}^{\otimes}\colon\fromto{A^{\eff}(C,C_\dag,C^\dag)^\otimes}{\Wald_\infty^\otimes}.\]
\end{cor}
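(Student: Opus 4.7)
The plan is to chain together three facts that have already been assembled in the preceding sections. First, by the immediately preceding theorem, the unfurling
\[\Upsilonup(p)^{\otimes}\colon\fromto{\Upsilonup(\XX/(C,C_{\dag},C^{\dag}))^{\otimes}}{A^{\eff}(C,C_{\dag},C^{\dag})^{\otimes}}\]
exhibits its source as an $A^{\eff}(C,C_{\dag},C^{\dag})^{\otimes}$-monoidal Waldhausen $\infty$-category in the sense of Df.~\ref{dfn:OmonoidalWald} (this is where the cartesian hypothesis together with either left or right completeness gets used, via Pr.~\ref{prp:cartesiancocart} to see that $A^{\eff}(C,C_{\dag},C^{\dag})^{\otimes}$ is symmetric monoidal rather than merely symmetric promonoidal, and via Lm.~\ref{potimesisadequate} to apply the unfurling construction in the first place).

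Second, Pr.~\ref{prp:Waldbicartmon} tells us that any $O^{\otimes}$-monoidal Waldhausen $\infty$-category is classified by an essentially unique morphism of $\infty$-operads $\fromto{O^{\otimes}}{\Wald_{\infty}^{\otimes}}$. Applying this with $O^{\otimes}=A^{\eff}(C,C_{\dag},C^{\dag})^{\otimes}$ produces the desired morphism of $\infty$-operads
\[\MM_{p}^{\otimes}\colon\fromto{A^{\eff}(C,C_{\dag},C^{\dag})^{\otimes}}{\Wald_{\infty}^{\otimes}}.\]

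Third, to upgrade this morphism of $\infty$-operads to a Green functor in the sense of Df.~\ref{dfn:Greenfunctor}, it only remains to check that the underlying functor $\MM_{p}\colon\fromto{A^{\eff}(C,C_{\dag},C^{\dag})}{\Wald_{\infty}}$ preserves direct sums. But this underlying functor is precisely the Mackey functor arising from the Waldhausen bicartesian fibration $p\colon\fromto{\XX}{C}$ underlying $p_{\boxtimes}$ via the construction of \cite[\S 11]{M1}, which is already known to be a Mackey functor (and in particular to preserve direct sums, since direct sums in $A^{\eff}(C,C_{\dag},C^{\dag})$ are computed from coproducts in $C$ and the unfurling is compatible with these). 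Hence $\MM_{p}^{\otimes}$ is a Green functor, which is the conclusion.

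No step really constitutes an obstacle: the substantive work has been done in the preceding theorem on the $A^{\eff}(C,C_{\dag},C^{\dag})^{\otimes}$-monoidal structure of the unfurling, and here we are only packaging it through the straightening equivalence of Pr.~\ref{prp:Waldbicartmon} and recalling the already-established Mackey functoriality of the underlying functor.
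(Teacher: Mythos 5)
Your proposal is correct and is essentially the paper's own (one-line) argument: the preceding theorem exhibits $\Upsilonup(p)^{\otimes}$ as an $A^{\eff}(C,C_{\dag},C^{\dag})^{\otimes}$-monoidal Waldhausen $\infty$-category, and Pr.~\ref{prp:Waldbicartmon} then supplies the classifying morphism of $\infty$-operads $\MM_{p}^{\otimes}$, the direct-sum (Mackey) property of the underlying functor being already furnished by the unfurling results of \cite{M1}. The only minor quibble is your bookkeeping of hypotheses — Lm.~\ref{potimesisadequate} needs only left completeness, and the Green-functor condition of Df.~\ref{dfn:Greenfunctor} does not actually require the symmetric monoidality of $A^{\eff}(C,C_{\dag},C^{\dag})^{\otimes}$ supplied by Pr.~\ref{prp:cartesiancocart} — but this does not affect the correctness of the argument.
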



\section{Equivariant algebraic $K$-theory of group actions} In this section, we answer a question of Akhil Mathew. Namely, for any Waldhausen $\infty$-category $C$ with an action of a finite group $G$, can one form an equivariant algebraic $K$-theory spectrum $K_G(C)$ whose $H$-fixed point spectrum is the algebraic $K$-theory of the homotopy fixed point $\infty$-category $C^{hH}$? Furthermore, can one do this in a lax symmetric monoidal fashion, so that if $C$ is an algebra in Waldhausen $\infty$-categories over an $\infty$-operad $O^{\otimes}$, then $K_G(C)$ is an algebra over $O^{\otimes}$ in $\Mack(\FF_G;\Sp)$? The answer to both of these questions is yes, and our framework makes it an almost trivial matter to see how.

\begin{cnstr} Suppose $G$ a finite group. Let denote by $\FF_G^{\free}\subset\FF_G$ the full subcategory spanned by those finite $G$-sets upon which $G$ acts freely. Observe that $\FF_G^{\free}$ is the finite-coproduct completion of $BG$; that is, it is the free $\infty$-category with finite coproducts generated by $BG$. Consequently, $A^{\eff}(\FF_G^{\free})$ is the free semiadditive $\infty$-category generated by $BG$; that is, evaluation at $G/e$ defines an equivalence
\[\equivto{\Mack(\FF_G^{\free};A)}{\Fun(BG,A)}.\]

At the same time, the subcategory $\FF_G^{\free}\subset\FF_G$ is clearly closed under coproducts, and since $\FF_G^{\free}$ is a sieve in $\FF_G$, it follows that it is stable under pullbacks and binary products as well. Consequently, we obtain a fully faithful inclusion
\[\into{A^{\eff}(\FF_G^{\free})}{A^{\eff}(\FF_G)}.\]
We thus obtain, for any semiadditive $\infty$-category $A$, a corresponding restriction functor
\[\fromto{\Mack(\FF_G;A)}{\Mack(\FF_G^{\free};A)}.\]

If $A$ is an addition presentable, then the restriction functor admits a right adjoint
\[B_G\colon\fromto{\Fun(BG,A)}{\Mack(\FF_G;A)},\]
given by right Kan extension. We shall call this the \emph{Borel functor}, since it assigns to any ``naïve'' $G$-object the corresponding \emph{Borel-equivariant} object.

Applying this when $A=\Wald_{\infty}$ and apply algebraic $K$-theory, we obtain the \emph{algebraic $K$-theory of group actions}:
\[\KK\circ B_G\colon\fromto{\Fun(BG,\Wald_{\infty})}{\Mack(\FF_G;\Sp)}.\]
\end{cnstr}

\begin{prp} The algebraic $K$-theory of group actions extends naturally to a lax symmetric monoidal functor
\[\KK^{\otimes}\circ B_G^{\otimes}\colon\fromto{\Fun(BG,\Wald_{\infty})^{\otimes}}{\Mack(\FF_G;\Sp)^{\otimes}}.\]
for the objectwise symmetric monoidal structure relative to the symmetric monoidal structure on $\Wald_{\infty}$ \cite{K3} and the additivized Day convolution on spectral Mackey functors.
\begin{proof} Since $\KK^{\otimes}$  is lax symmetric monoidal \cite{K3}, it suffices to show that for any presentable semiadditive symmetric monoidal $\infty$-category $E^{\otimes}$, the Borel functor $B_G$ extends to a symmetric monoidal functor
\[B_G^{\otimes}\colon\fromto{\Fun(BG,E)^{\otimes}\simeq\Mack(\FF_G^{\free};A)^{\otimes}}{\Mack(\FF_G;E)^{\otimes}}.\]
This will follow directly from \cite{HA}, once one knows that the restriction functor
\[\fromto{\Mack(\FF_G;E)}{\Fun(BG,E)}\]
extends to a symmetric monoidal functor
\[\fromto{\Mack(\FF_G;E)^{\otimes}}{\Mack(\FF_G^{\free};A)^{\otimes}\simeq\Fun(BG,E)^{\otimes}}.\]
For this, observe that since $\FF_G^{\free}\subset\FF_G$ is stable under binary products, the inclusion
\[\into{A^{\eff}(\FF_G^{\free})}{A^{\eff}(\FF_G)}\]
extends to a symmetric monoidal functor
\[\into{A^{\eff}(\FF_G^{\free})^{\otimes}}{A^{\eff}(\FF_G)^{\otimes}}.\]
It thus suffices to note that for any free finite $G$-set $V$, the subcategory
\[(A^{\eff}(\FF_G^{\free})\times A^{\eff}(\FF_G^{\free})) \times_{A^{\eff}(\FF_G^{\free})} A^{\eff}(\FF_G^{\free})_{/V} \subset (A^{eff}(\FF_G)\times A^{\eff}(\FF_G)) \times_{A^{\eff}(\FF_G)} A^{\eff}(\FF_G)_{/V}\]
is cofinal.
\end{proof}
\end{prp}


\section{Equivariant algebraic $K$-theory of derived stacks} In this section, we construct two symmetric monoidal Waldhausen bicartesian fibrations that extend the following two Waldhausen bicartesian fibrations introduced in \cite[\S D]{M1}:
\begin{itemize}
\item the Waldhausen bicartesian fibration
\[\fromto{\Perf^{\op}\times_{\Shv_{\textit{flat}}}\DM}{\DM}\]
for the left complete disjunctive triple $(\DM,\DM_{\FP},\DM)$ of spectral De\-ligne--Mumford stacks, in which the ingressive morphisms are strongly proper morphisms of finite Tor-amplitude, and all morphisms are egressive \cite[Pr. D.18]{M1}, and
\item the Waldhausen bicartesian fibration
\[\fromto{\Perf^{\op}}{\Shv_{\textit{flat}}}\]
for the left complete disjunctive triple $(\Shv_{\textit{flat}},\Shv_{\textit{flat},\QP},\Shv_{\textit{flat}})$ of flat sheaves in which the ingressive morphisms are the quasi-affine representable and perfect morphisms, and all morphisms are egressive \cite[Pr. D.21]{M1}.
\end{itemize}
These will give algebraic $K$-theory the structure of a commutative Green functor for these two triples.

\begin{nul} To begin, we let
\begin{equation*}
\begin{tikzpicture} 
\matrix(m)[matrix of math nodes, 
row sep=5ex, column sep=4ex, 
text height=1.5ex, text depth=0.25ex] 
{\categ{Mod}^{\otimes}&\QCoh^{\otimes}\\ 
\CAlg^{\mathit{cn}}\times N\Lambdaup(\FF)&\Shv_{\textit{flat}}^{\op}\times N\Lambdaup(\FF)\\}; 
\path[>=stealth,->,font=\scriptsize] 
(m-1-1) edge (m-1-2) 
edge node[left]{$q$} (m-2-1) 
(m-1-2) edge node[right]{$p$} (m-2-2) 
(m-2-1) edge[right hook->] (m-2-2); 
\end{tikzpicture}
\end{equation*}
be a pullback square in which $q$ is the cocartesian fibration of \cite[Th. 4.5.3.1]{HA}, and $p$ is a cocartesian fibration classified by the right Kan extension of the functor that classifies $q$. The objects of $\QCoh^{\otimes}$ can be thought of as triples $(X,I,M_I)$ consisting of a sheaf $X\colon\fromto{\CAlg^{\mathit{cn}}}{\Kan(\kappa_1)}$ for the flat topology, a finite set $I$, and an $I$-tuple $M_I=\{M_i\}_{i\in I}$ of quasicoherent modules $M$ over $X$.
\end{nul}

\begin{nul} We may now pass to the cocartesian $\infty$-operads to obtain a cocartesian fibration of $\infty$-operads
\[p^{\sqcup}\colon\fromto{(\QCoh^{\otimes})^{\sqcup}}{(\Shv_{\textit{flat}}^{\op}\times N\Lambdaup(\FF))^{\sqcup}\simeq(\Shv_{\textit{flat},\times})^{\op}\times_{N\Lambdaup(\FF)}N\Lambdaup(\FF)^{\sqcup}}.\]
Now $\fromto{N\Lambdaup(\FF)^{\sqcup}}{N\Lambdaup(\FF)}$ admits a section that carries any finite set $I$ to the pair $(I,\ast_I)$, where $\ast_I=\{\ast\}_{i\in I}$. Let us pull back $p^{\sqcup}$ along this section to obtain a cocartesian fibration of $\infty$-operads
\[p^{\boxtimes}\colon\fromto{\QCoh^{\boxtimes}\coloneq(\QCoh^{\otimes})^{\sqcup}\times_{N\Lambdaup(\FF)^{\sqcup}}N\Lambdaup(\FF)}{(\Shv_{\textit{flat},\times})^{\op}}.\]
\end{nul}

\begin{nul} Passing to opposites, we obtain a functor
\[\fromto{(\QCoh^{\op})_\boxtimes\coloneq(\QCoh^{\boxtimes})^{\op}}{\Shv_{\textit{flat},\times}}\]
which
\begin{itemize}
\item restricts to a symmetric monoidal Waldhausen bicartesian fibration
\[\fromto{(\QCoh^{\op})_\boxtimes\times_{\Shv_{\textit{flat},\times}}\DM_{\times}}{\DM_{\times}}\]
that extends the Waldhausen bicartesian fibration of \cite[Pr. D.10]{M1} for the disjunctive triple of spectral Deligne--Mumford stacks, in which the ingressive morphisms are relatively scalloped, and all morphisms are egressive, and
\item gives a symmetric monoidal Waldhausen bicartesian fibration
\[\fromto{(\QCoh^{\op})_\boxtimes}{\Shv_{\textit{flat},\times}}\]
that extends the Waldhausen bicartesian fibration of \cite[Pr. D.13]{M1} for the disjunctive triple of flat sheaves, in which the ingressive morphisms are quasi-affine representable, and all morphisms are egressive.
\end{itemize}
\end{nul}

\begin{nul} At last, restricting to perfect modules, we obtain the desired symmetric monoidal Waldhausen bicartesian fibrations
\[\fromto{(\Perf^{\op})_{\boxtimes}\times_{(\Shv_{\textit{flat}})_{\times}}\DM_{\times}}{\DM_{\times}}\]
for $(\DM,\DM_{\FP},\DM)$ and
\[\fromto{(\Perf^{\op})_{\boxtimes}}{(\Shv_{\textit{flat}})_{\times}}\]
for $(\Shv_{\textit{flat}},\Shv_{\textit{flat},\QP},\Shv_{\textit{flat}})$.
\end{nul}

Now, passing to the unfurling, we obtain the following pair of results.
\begin{prp} The Mackey functor
\[\MM_{\DM}\colon\fromto{A^{\eff}(\DM,\DM_{\FP},\DM)}{\Wald_{\infty}}\]
of \cite[Cor. D.18.1]{M1} admits a natural structure of a commutative Green functor $\MM_{\DM}^{\otimes}$. In particular, the algebraic $K$-theory of spectral Deligne--Mumford stacks is naturally a commutative spectral Green functor for $(\DM,\DM_{\FP},\DM)$.
\end{prp}

\begin{prp} The Mackey functor
\[\MM_{\Shv_{\textit{flat}}}\colon\fromto{A^{\eff}(\Shv_{\textit{flat}},\Shv_{\textit{flat},\QP},\Shv_{\textit{flat}})}{\Wald_{\infty}}\]
of \cite[Cor. D.21.1]{M1} admits a natural structure of a commutative Green functor $\MM_{\Shv_{\textit{flat}}}^{\otimes}$. In particular, the algebraic $K$-theory of flat sheaves is naturally a commutative spectral Green functor for $(\Shv_{\textit{flat}},\Shv_{\textit{flat},\QP},\Shv_{\textit{flat}})$.
\end{prp}

\begin{cnstr}\label{cnstr:Galoisequivarianteinftyalg} Suppose $X$ a spectral Deligne--Mumford stack. As in \cite[Nt. D.23]{M1}, we denote by $\FEt(X)$ the subcategory of $\DM_{/X}$ whose objects are finite \cite[Df. 3.2.4]{DAGXII} and \'etale morphisms $\fromto{Y}{X}$ and whose morphisms are finite and \'etale morphisms over $X$. Observe that the fiber product $-\times_X-$ endows $\FEt(X)$ with the structure of a cartesian disjunctive $\infty$-category. We will abuse notation and write $A^{\eff}(X)^{\otimes}$ for the symmetric monoidal effective Burnside $\infty$-category of $\FEt(X)$.

Now the inclusion
\[\into{(\FEt(X),\FEt(X),\FEt(X))}{(\DM,\DM_{\FP},\DM)}\]
is clearly a morphism of cartesian disjunctive triples, whence one can restrict the commutative Green functor $\MM_{\DM}^{\otimes}$ above along the morphism of $\infty$-operads
\[\fromto{A^{\eff}(X)^{\otimes}}{A^{\eff}(\DM,\DM_{\FP},\DM)^{\otimes}}\]
to a commutative Green functor
\[\MM_X\colon\fromto{A^{\eff}(X)^{\otimes}}{\Wald_\infty^{\otimes}}.\]

Now if $X$ is (say) a connected, noetherian scheme, then a choice of geometric point $x$ of $X$ gives rise to an equivalence
\[A^{\eff}(\pi_1^{\et}(X,x))^{\otimes}\simeq A^{\eff}(X)^{\otimes}.\]
Applying algebraic $K$-theory, we obtain a commutative spectral Green functor for the étale fundamental group:
\begin{equation*}
\KK^{\otimes}_{\pi_1^{\et}(X,x)}(X)\colon\fromto{A^{\eff}(\pi_1^{\et}(X,x))^{\otimes}}{\Sp^{\otimes}}.
\end{equation*}
This commutative Green functor deserves the handle \emph{Galois-equivariant algebraic $K$-theory}.
\end{cnstr}


\section{An equivariant Barratt--Priddy--Quillen Theorem}

\begin{ntn} In this section, suppose $(C,C_\dag,C^\dag)$ a cartesian disjunctive triple.
\end{ntn}

\begin{rec} \label{cnstr:RCCdagCdag} Recall \cite[Df. 13.5]{M1} that $\RR(C)\subset\Fun(\Deltaup^2/\Deltaup^{\{0,2\}},C)$ is the full subcategory spanned by those retract diagrams
\begin{equation*}
S_0\to S_1\to S_0;
\end{equation*}
such that the morphism $\fromto{S_0}{S_1}$ is a summand inclusion. We endow $\RR(C)$ with the structure of a pair in the following manner. A morphism $\fromto{T}{S}$ will be declared ingressive just in case $\fromto{T_0}{S_0}$ is an equivalence, and $\fromto{T_1}{S_1}$ is a summand inclusion. Write $p$ for the functor $\fromto{\RR(C)}{C}$ given by evaluation at the vertex $0=2$:
\begin{equation*}
\goesto{[S_0\to S_1\to S_0]}{S_0}.
\end{equation*}
Recall also that $\RR(C,C_{\dag},C^{\dag})\subset\RR(C)$ is the full subcategory spanned by those objects
\[S\colon\fromto{\Deltaup^2/\Deltaup^{\{0,2\}}}{C}\]
such that for any complement $\into{S'_0}{S_1}$ of the summand inclusion $\into{S_0}{S_1}$,
\begin{enumerate}[(\ref{cnstr:RCCdagCdag}.1)]
\item the essentially unique morphism $\fromto{S'_0}{1}$ to the terminal object of $C$ is egressive, and
\item the composite $S'_0\to S_1\to S_0$ is ingressive.
\end{enumerate}
We endow $\RR(C,C_{\dag},C^{\dag})$ with the pair structure induced by $\RR(C)$. We will abuse notation by denoting the restriction of the functor $p\colon\fromto{\RR(C)}{C}$ to the subcategory $\RR(C,C_{\dag},C^{\dag})\subset\RR(C)$ again by $p$.

We proved in \cite[Th. 13.11]{M1} that $p$ is a Waldhausen bicartesian fibration over $(C,C_\dag,C^\dag)$.
\end{rec}

\begin{cnstr} Recall that an object of the $\infty$-category $\RR(C,C_{\dag},C^{\dag})_\times$ can be described as pairs $(I,X)$ consisting of a finite set $I$ and a collection $X=\{X_i\ |\ i\in I\}$ of objects of $\RR(C,C_{\dag},C^{\dag})$ indexed by the elements of $I$. Accordingly, a morphism $\fromto{(I,X)}{(J,Y)}$ of $\RR(C,C_{\dag},C^{\dag})_\times$ can be described as a map $\fromto{J}{I_{+}}$ of finite sets and a collection
\[\left\{\fromto{X_i}{\prod_{j\in J_i}Y_j}\quad\middle|\quad i\in I\right\}\]
of morphisms of $\RR(C,C_{\dag},C^{\dag})$.

We now define a subcategory $\RR(C,C_{\dag},C^{\dag})_{\boxtimes}\subset\RR(C,C_{\dag},C^{\dag})_\times$ that contains all the objects. A morphism $\fromto{(I,X)}{(J,Y)}$ of $\RR(C,C_{\dag},C^{\dag})_\times$ is a morphism of $\RR(C,C_{\dag},C^{\dag})_{\boxtimes}$ if and only if, for every $i\in I$, every nonempty proper subset $K_i\subset J_i$, and every choice of a complement $\into{Y'_{\mskip-3mu j,0}}{Y_{\mskip-3mu j,1}}$ of the summand inclusion $\into{Y_{\mskip-3mu j,0}}{Y_{\mskip-3mu j,1}}$, the square
\begin{equation*}
\begin{tikzpicture} 
\matrix(m)[matrix of math nodes, 
row sep=4ex, column sep=4ex, 
text height=1.5ex, text depth=0.25ex] 
{\varnothing&X_{i,1}\\ 
\prod_{j\in K_i}Y_{\mskip-3mu j,0}\times\prod_{j\in J_i\setminus K_i}Y'_{\mskip-3mu j,0}&\prod_{j\in J_i}Y_{\mskip-3mu j,1},\\}; 
\path[>=stealth,->,font=\scriptsize] 
(m-1-1) edge (m-1-2) 
edge (m-2-1) 
(m-1-2) edge (m-2-2) 
(m-2-1) edge (m-2-2); 
\end{tikzpicture}
\end{equation*}
in which $\varnothing$ is initial and the bottom morphism is the obvious summand inclusion, is a pullback.

Let us endow this $\infty$-category with a pair structure in the following manner. We declare a morphism $\fromto{(I,X)}{(J,Y)}$ of $\RR(C,C_{\dag},C^{\dag})_{\boxtimes}$ to be ingressive just in case the map $\fromto{J}{I_{+}}$ represents an isomorphism in $\Lambdaup(\FF)$, and, for every $i\in I$, the map $\fromto{X_i}{Y_{\phi(i)}}$ of $\RR(C,C_{\dag},C^{\dag})$ is ingressive.
\end{cnstr}

The following is now immediate.

\begin{prp} The functor
\[p_\boxtimes\colon\fromto{\RR(C,C_{\dag},C^{\dag})_\boxtimes}{C_\times}\]
given by evaluation at $0=2$ in $\Deltaup^2/\Deltaup^{\{0,2\}}$exhibits $\RR(C,C_{\dag},C^{\dag})$ as a symmetric monoidal Waldhausen bicartesian fibration over $(C,C_{\dag},C^{\dag})$.
\end{prp}

\begin{cnstr} Now we may the unfurling construction of \cite[\S 11]{M1} to the symmetric monoidal Waldhausen bicartesian fibration $p_\boxtimes$ to obtain an $A^{\eff}(C,C_{\dag},C^{\dag})^{\otimes}$-monoidal Waldhausen $\infty$-category (in the sense of \cite{K3})
\begin{equation*}
\Upsilonup(p)^\otimes\colon\fromto{\Upsilonup(\RR(C,C_{\dag},C^{\dag})/(C,C_{\dag},C^{\dag}))^{\otimes}}{A^{\eff}(C,C_{\dag},C^{\dag})^{\otimes}}.
\end{equation*}
As we've demonstrated, $\Upsilonup(p)^\otimes$ is classified by an $E_{\infty}$ Green functor
\begin{equation*}
\MM^{\otimes}_{p}\colon\fromto{A^{\eff}(C,C_{\dag},C^{\dag})^{\otimes}}{\Wald_{\infty}^{\otimes}}
\end{equation*}
whose underlying functor is the Mackey functor
\begin{equation*}
\MM_{p}\colon\fromto{A^{\eff}(C,C_{\dag},C^{\dag})}{\Wald_{\infty}}
\end{equation*}
corresponding to the unfurling of the Waldhausen bicartesian fibration
\[\fromto{\RR(C,C_{\dag},C^{\dag})}{C}\]
over $(C,C_{\dag},C^{\dag})$.
\end{cnstr}

In \cite{K3}, we demonstrated that algebraic $K$-theory lifts in a natural fashion to a morphism of $\infty$-operads, whence we may contemplate the commutative Green functor
\begin{equation*}
\KK^{\otimes}\circ\MM^{\otimes}_{p}\colon\fromto{A^{\eff}(C,C_{\dag},C^{\dag})^\otimes}{\Sp^\otimes}.
\end{equation*}
Observe that by \cite[Th. 13.12]{M1}, the underlying Mackey functor
\begin{equation*}
\SS_{(C,C_{\dag},C^{\dag})}\coloneq\KK\circ\MM_{p}
\end{equation*}
of $\KK^{\otimes}\circ\MM^{\otimes}_{p}$ is the spectral Burnside Mackey functor for $(C,C_{\dag},C^{\dag})$, as defined in \cite[Df. 8.1]{M1}. In particular, it is unit for the symmetric monoidal $\infty$-category $\Mack(C,C_{\dag},C^{\dag};\Sp)$, which of course admits an essentially unique $E_\infty$ structure. Consequently, we deduce the following.

\begin{thm}[Equivariant Barratt--Priddy--Quillen]\label{thm:ebpq} The Green functor $\KK^{\otimes}\circ\MM^{\otimes}_{p}$ is the spectral Burnside Green functor $\SS_{(C,C_{\dag},C^{\dag})}$.
\end{thm}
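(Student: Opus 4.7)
The plan is to exploit the fact that in any symmetric monoidal $\infty$-category the unit object carries an essentially unique $E_\infty$-algebra structure. All the ingredients have in fact already been assembled; the theorem amounts to a compatibility observation.

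First I would invoke the identification, noted in the paragraph preceding the theorem statement and ultimately resting on \cite[Th. 13.12]{M1}, of the underlying Mackey functor of $\KK \circ \MM_p$ with the spectral Burnside Mackey functor $\SS_{(C,C_{\dag},C^{\dag})}$. Since $\KK^{\otimes}$ is a lax symmetric monoidal refinement of $\KK$ (by \cite{K3}) and the unfurling $\MM_p^{\otimes}$ restricts along $\{1\}\hookrightarrow \N\Lambdaup(\FF)$ to $\MM_p$, the underlying Mackey functor of the Green functor $\KK^{\otimes}\circ\MM_p^{\otimes}$ is indeed $\SS_{(C,C_{\dag},C^{\dag})}$.

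Next I would appeal to the proposition proved earlier in this paper showing that $\SS_{(C,C_{\dag},C^{\dag})}$ is the unit of the symmetric monoidal $\infty$-category $\Mack(C,C_{\dag},C^{\dag};\Sp)^{\otimes}$, and that the spectral Burnside Green functor $\SS^{\otimes}_{(C,C_{\dag},C^{\dag})}$ is initial in $\Green_{\N\Lambdaup(\FF)}(C,C_{\dag},C^{\dag};\Sp^{\otimes})$. By the essential uniqueness of $E_\infty$-algebra structures on the unit (a consequence of the initiality of the unit object in $\Alg_{E_\infty}$ of any symmetric monoidal $\infty$-category, cf.\ \cite{HA}), any $E_\infty$ Green functor whose underlying Mackey functor is the unit must be canonically equivalent to $\SS^{\otimes}_{(C,C_{\dag},C^{\dag})}$. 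Applying this to $\KK^{\otimes}\circ\MM_p^{\otimes}$ yields the claim.

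The main obstacle is the bookkeeping in the first step: one must check that the forgetful functor from Green functors to Mackey functors genuinely commutes with the formation of the unfurling and with the lax symmetric monoidal enhancement of $\KK$. This is not difficult -- it is built into the definitions of $\MM_p^{\otimes}$ and $\KK^{\otimes}$ -- but it is the only point where any actual verification (as opposed to formal invocation of universal properties) is required.
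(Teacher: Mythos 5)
Your proposal is correct and follows essentially the same route as the paper: identify the underlying Mackey functor with $\SS_{(C,C_{\dag},C^{\dag})}$ via \cite[Th. 13.12]{M1}, recall that this is the unit of $\Mack(C,C_{\dag},C^{\dag};\Sp)^{\otimes}$, and conclude by the essential uniqueness of the $E_\infty$-structure on the unit. The paper's own proof is exactly this deduction, packaged in the paragraph preceding the theorem statement.
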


\noindent Of course, this result directly implies the original Barratt--Priddy--Quillen Theorem, which states that the algebraic $K$-theory of the ordinary Waldhausen category $\FF_\ast$ of pointed finite sets (in which the cofibrations are the monomorphisms) is the sphere spectrum $\SS$. Furthermore, the essentially unique $E_\infty$ structure on $\SS$ is induced by the smash product of pointed finite sets.


\section{A brief epilogue about the Theorems of Guillou--May}

Suppose $G$ a finite group. Write $\categ{OrthSp}_G$ for the underlying $\infty$-category of the relative category of orthogonal $G$-spectra. The Equivariant Barratt--Priddy--Quillen Theorem of Guillou--May \cite{guillou2} provides a similar description in $\categ{OrthSp}_G$ of certain mapping spectra. Note that this is not \emph{a priori} related to Th. \ref{thm:ebpq} when $C=\FF_G$. Nevertheless, a suitable comparison theorem (which of course Guillou--May provide in \cite{guillou3}) offers an implication.

On the other hand, the proof of our result here, combined with work from our forthcoming book \cite{BDGNS1}, will allow us to reprove, using entirely different methods, the comparison result of Guillou--May. Indeed, if we can extend the functor $\Sigma^{\infty}_+\colon\fromto{\FF_G}{\categ{OrthSp}_G}$ to a suitable functor $\fromto{A^{\eff}(\FF_G)}{\categ{OrthSp}_G}$, then the Equivariant Barratt--Priddy--Quillen Theorem above and the Schwede--Shipley theorem \cite{MR2003g:55034} together will imply the result of Guillou--May \cite{guillou3} providing the equivalence
\[\Sp^G\simeq\categ{OrthSp}_G.\]

It is, however, difficult to construct the desired functor $\fromto{A^{\eff}(\FF_G)}{\categ{OrthSp}_G}$ directly, as this involves nontrivial homotopy coherence problems. However, in the language of $G$-equivariant $\infty$-category theory, which we develop in the forthcoming \cite{BDGNS1} provides a universal property for the $G$-equivariant effective Burnside $\infty$-category. This will provide us with the desired functor, and we will easily deduce the desired equivalence as a corollary.

\bibliographystyle{amsplain}
\bibliography{kthy}

\providecommand{\bysame}{\leavevmode\hbox to3em{\hrulefill}\thinspace}
\providecommand{\MR}{\relax\ifhmode\unskip\space\fi MR }
\providecommand{\MRhref}[2]{%
  \href{http://www.ams.org/mathscinet-getitem?mr=#1}{#2}
}
\providecommand{\href}[2]{#2}
\begin{thebibliography}{10}

\bibitem{opcat}
C.~Barwick, \emph{From operator categories to topological operads}, Preprint
  \texttt{arXiv:1302.5756}, February 2013.

\bibitem{K3}
\bysame, \emph{Multiplicative structures on algebraic {$K$}-theory}, Preprint
  \texttt{arXiv:1304.4867}., April 2013.

\bibitem{K1}
\bysame, \emph{On the algebraic ${K}$-theory of higher categories.}, Preprint
  \texttt{arXiv:1204.3607}, April 2013.

\bibitem{M1}
\bysame, \emph{Spectral {M}ackey functors and equivariant algebraic
  {$K$}-theory ({I})}, Preprint \texttt{arXiv:1404.0108}, April 2014.

\bibitem{BDGNS2}
C.~Barwick, E.~Dotto, S.~Glasman, D.~Nardin, and J.~Shah, \emph{Parametrized
  higher algebra}, To appear.

\bibitem{BDGNS1}
\bysame, \emph{Parametrized higher category theory}, To appear.

\bibitem{cyclon}
C.~Barwick and S.~Glasman, \emph{Cyclonic spectra, cyclotomic spectra, and a
  conjecture of {K}aledin}, Preprint \texttt{arXiv:1602.02163}.

\bibitem{MR50:7133}
P.~Berthelot, A.~Grothendieck, and L.~Illusie, \emph{Th\'eorie des
  intersections et th\'eor\`eme de {R}iemann-{R}och}, S\'eminaire de
  G\'eom\'etrie Alg\'ebrique du Bois Marie 1966--67 (SGA 6). Avec la
  collaboration de D. Ferrand, J. P. Jouanolou, O. Jussila, S. Kleiman, M.
  Raynaud, J. P. Serre. Lecture Notes in Mathematics, Vol. 225,
  Springer-Verlag, Berlin, 1966--67. \MR{50 \#7133}

\bibitem{day-thesis}
B.~Day, \emph{Construction of biclosed categories}, Ph.D. thesis, University of
  New South Wales, September 1970.

\bibitem{MR0360771}
A.~W.~M. Dress, \emph{Notes on the theory of representations of finite groups.
  {P}art {I}: {T}he {B}urnside ring of a finite group and some
  {AGN}-applications}, Universit\"at Bielefeld, Fakult\"at f\"ur Mathematik,
  Bielefeld, 1971, With the aid of lecture notes, taken by Manfred K{\"u}chler.
  \MR{0360771 (50 \#13218)}

\bibitem{GlasmanDay}
S.~Glasman, \emph{Day convolution for $\infty$-categories}, Preprint,
  \texttt{arXiv:1308.4940}, August 2013.

\bibitem{guillou2}
B.~Guillou and J.~P. May, \emph{Models of {$G$}-spectra as presheaves of
  spectra}, Preprint \texttt{arXiv:1110.3571v3}, July 2013.

\bibitem{guillou3}
\bysame, \emph{Permutative {$G$}-categories in equivariant infinite loop space
  theory}, Preprint \texttt{arXiv:1207.3459v2}, March 2014.

\bibitem{lewis-notes}
L.~G. Lewis, Jr., \emph{The theory of {G}reen functors}, Unpublished
  manuscript.

\bibitem{Lewis01032006}
L.~Gaunce Lewis and Michael~A. Mandell, \emph{Equivariant universal coefficient
  and künneth spectral sequences}, Proceedings of the London Mathematical
  Society \textbf{92} (2006), no.~2, 505--544.

\bibitem{HTT}
J.~Lurie, \emph{Higher topos theory}, Annals of Mathematics Studies, vol. 170,
  Princeton University Press, Princeton, NJ, 2009. \MR{2522659 (2010j:18001)}

\bibitem{MR2555928}
\bysame, \emph{On the classification of topological field theories}, Current
  developments in mathematics, 2008, Int. Press, Somerville, MA, 2009,
  pp.~129--280. \MR{2555928 (2010k:57064)}

\bibitem{DAGXII}
\bysame, \emph{Derived algebraic geometry {XII}. {P}roper morphisms,
  completions, and the {G}rothendieck existence theorem}, Preprint from the web
  page of the author, November 2011.

\bibitem{HA}
\bysame, \emph{Higher algebra}, Preprint from the web page of the author,
  September 2014.

\bibitem{MR2003g:55034}
Stefan Schwede and Brooke Shipley, \emph{Stable model categories are categories
  of modules}, Topology \textbf{42} (2003), no.~1, 103--153. \MR{2003g:55034}

\end{thebibliography}

\end{document}